\documentclass{article}
\usepackage{arxiv}
\usepackage[utf8]{inputenc} % allow utf-8 input
\usepackage[T1]{fontenc}    % use 8-bit T1 fonts
\usepackage{hyperref}       % hyperlinks
\usepackage{url}            % simple URL typesetting
\usepackage{booktabs}       % professional-quality tables
\usepackage{amsthm,amssymb,amsmath,amsfonts,amscd,mathrsfs,bm}       % blackboard math symbols
\usepackage{nicefrac}       % compact symbols for 1/2, etc.
\usepackage{microtype}      % microtypography
\usepackage{array,multirow}
\usepackage{graphicx}

\newtheorem{propo}{Proposition}
\newtheorem{lem}{Lemma}
\newtheorem{rem}{Remark}

\title{The operating diagram for 
a two-step anaerobic digestion model}

%\date{March 31, 2020}	% Here you can change the date presented in the paper title
%\date{} 					% Or removing it

\author{ \href{https://orcid.org/0000-0002-6274-7826}{\includegraphics[scale=0.06]{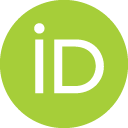}\hspace{1mm}Tewfik Sari}
%\thanks{Use footnote for providing further information about author (webpage, alternative		address)---\emph{not} for acknowledging funding agencies.} 
\\
	ITAP, Univ Montpellier,\\ 
	INRAE, Institut Agro,
	Montpellier, France \\
	\texttt{tewfik.sari@inrae.fr}
	\and
	\And
	\href{https://orcid.org/0000-0001-6652-2141}{\includegraphics[scale=0.06]{orcid.png}\hspace{1mm}Boumediene Benyahia} \\
	Laboratoire d'Automatique de Tlemcen,\\ 
	Université de Tlemcen, Tlemcen, Algeria\\
	\texttt{b.benyahia.ut1@gmail.com}	
}

% Uncomment to remove the date
\date{\today}
% Uncomment to override  the `A preprint' in the header
%\renewcommand{\headeright}{Technical Report}
%\renewcommand{\undertitle}{Technical Report}

\begin{document}
\maketitle

\begin{abstract}
The Anaerobic Digestion Model No. 1 (ADM1) is a complex model which is widely accepted as a common
platform for anaerobic process modeling and simulation. However, it has a large number of parameters and states that hinder its analytical study. Here, we consider the two-step reduced model of anaerobic digestion (AM2) which is a four-dimensional system of ordinary differential equations. The AM2 model is able to adequately capture the main dynamical behavior of the full anaerobic digestion model ADM1 and has the advantage that a complete analysis for the existence and local stability of its steady states is available. We describe its operating diagram, which is the bifurcation diagram
which gives the behavior of the system with respect to the operating parameters
represented by the dilution rate and the input concentrations of the substrates. This diagram, is very useful to understand the model from
both the mathematical and biological points of view.	
\end{abstract}

% keywords can be removed
\keywords{
Anaerobic digestion \and ADM1 \and AM2 \and Steady state analysis \and Operating diagram \and Bifurcation analysis}

\section{Introduction}\label{sec1}

The anaerobic digestion is a complex process in which organic material is converted into biogas (methane) in an environment without oxygen. Anaerobic digestion enables the water industry to treat waste water as a resource for generating energy and recovering valuable by-products.
The complexity of the anaerobic digestion process has motivated the development of complex models, such as the widely used Anaerobic Digestion Model No. 1 (ADM1) \cite{Batstone}. This model has 
a large number of state variables and parameters.
It is impossible to obtain an analytical characterization of the steady states and to describe the operating diagram, that is to say, to identify  the asymptotic behaviour of existing steady-states as a function of chemostat operating parameters (substrates inflow concentrations and dilution rate). To the knowledge author, only numerical investigations are available \cite{Bornhoft}.
 
Due to the analytical intractability of the full ADM1, work has been made towards the construction of simpler models that preserve biological meaning whilst reducing the computational effort required to find mathematical solutions of the model equations, to obtain a better understanding of the anaerobic digestion process. 
The most simple model of the chemostat with only one biological reaction, where one substrate is consumed by one microorganism is well understood \cite{HLRS,Monod,SW}. However such models are too simple to encapsulate the essence of the anaerobic digestion process.

More realistic models of anaerobic digestion are two-step models, with a cascade of two biological reactions, where one substrate $S_1$ is consumed by one microorganism $X_1$, to produce a product $S_2$ that serves as the main limiting substrate for a second microorganism $X_2$ as schematically represented by the following reaction scheme:
\begin{equation}\label{twostep}
k_1S_1\stackrel{\mu_1}{\longrightarrow}X_1+k_2S_2,
\quad
k_3S_2\stackrel{\mu_2}{\longrightarrow}X_2 +k_4{\rm CH}_4
\end{equation} 
where $\mu_1$ and $\mu_2$ are the kinetics of the reactions and $k_i$ are pseudo-stoichiometric coefficients associated to the bioreactions.
An important contribution on the modelling of anaerobic digestion as a two-step is presented by Bernard et al. \cite{Bernard}, hereafter denoted as AM2. The model has a Monod kinetics  for the first reaction and a Haldane one for the second and was extended with general growth functions characterized by qualitative properties by Benyahia et al. \cite{Benyahia} and Sbarciog et al. \cite{Sbarciog}. It has been shown by 
García-Diéguez et al. \cite{Garcia}
that under some circumstances, this very simple two-step model is able to adequately capture the main dynamical behavior of the full anaerobic digestion model ADM1. Moreover, it has been shown that the reduced AM2 model can support on-line control, optimization and supervision strategies,
through the synthesis of state observers and control feedback laws, see for instance\cite{Alcaraz2002,Alcaraz2005}.

Another simple two-step model of anaerobic digestion is the model presented by Xu et al. \cite{Xu}, where the product of the first microorganism, that serves as the substrate for the second microorganism, inhibits the growth of the first microorganism. The model incorporates a Monod with product inhibition kinetics for he first reaction and Monod kinetics for the second one and was extended with general growth functions characterized by qualitative properties 
by Daoud et al. \cite{Daoud} and Sari and Harmand \cite{Sari2016}.

The two-step models studied in \cite{Benyahia,Bernard,Sbarciog} present a commensalistic relationship between the microorganisms. According to Stephanopolous \cite{Stephanopoulos}, the commensalism is characterized by the fact that the second population (the commensal population) benefits for its growth from the  first population (the host population) while the host population is not affected by the growth of the commensal population and hence, the first population can grow without the second one. 
On the contrary, the two-step models studied in \cite{Daoud,Sari2016,Xu} present a syntrophic relationship between the microorganisms: the first population is affected by the growth of the second population, and hence no population can grow without the other.  
For more details and information on commensalism and syntrophy the reader is referred to 
\cite{Burchard,ElHajji,Reilly,Sari2012,
Sari2016,Stephanopoulos,WadeReview} and the references therein.

Another important and interesting extensions of the two-step anaerobic digestion models are the mathematical models, which include syntrophy and substrate inhibition, considered by Weedermann et al. \cite{Weedermann2013,Weedermann2015} and the three-step models, which consist in introducing an additional microorganism and substrate in a two-step syntrophic model, considered by Wade et al.  \cite{Wade2016} and Sari and Wade \cite{Sari2017}. 

In this paper we will consider the two-step model of Bernard et al. \cite{Bernard}, with general growth functions as in \cite{Benyahia,Sbarciog}, denoted here after AM2, and we describe its {\em operating diagram}. 
The operating diagram has the {\em operating parameters} as its coordinates and the various regions defined in it correspond to qualitatively different asymptotic behavior. A two-step model has three operating parameters that are the input concentration of substrate for each reaction and the dilution rate. These parameters are {\em control parameters} since they are under the control of the experimenter. Apart from these three parameters, that can vary, all other parameters have biological meaning and are fixed depending on the organisms and substrate considered.

Therefore the operating diagram is the bifurcation diagram that shows how the system behaves when we vary the control parameters.  As it was claimed by Smith and Waltman in their monography on the mathematical theory of the chemostat (see \cite{SW}, p. 252), the operating diagram is probably the most useful answer for the discussion of the behavior of the model with respect of the parameters. This diagram shows how robust or how extensive is the parameter region where some asymptotic behavior occur. 

This bifurcation diagram is very useful to understand the model from both the mathematical and biological points of view. Its importance for ecological modeling was emphasized by De Freitas et al. \cite{Freitas} and for bioreactors by Pavlou \cite{Pavlou}. These authors attributed its introduction to Jost et al. \cite{Jost}, who studied the dynamics of predator and prey interactions in a chemostat.
This diagram is often constructed
both in the biological literature
\cite{Freitas,Jost,Pavlou,Sbarciog,Wade2016,Xu} and the mathematical literature
\cite{Abdellatif,Bar,Bornhoft,Daoud,Dellal,Fekih,HLRS,Khedim,
Sari2016,Sari2017,Weedermann2013,Weedermann2015}.

The operating diagram of the AM2 model was only partially described by Sbarciog et al. \cite{Sbarciog}. In this paper we give a complete description of the diagram. AM2 model can have up to six steady state. Its operating diagram presents nine regions according to the steady state and their stability, that can exist in each region. The operating
diagram summarizes the effect of the operating conditions on the long-term dynamics of the AM2 model and shows six type of behavior visualized in the figures by six different colors. Since AM2 model has three operating parameters, and it is not easy to visualize regions in the three-dimensional operating parameter space, two of the
operating parameters  are used as coordinates
of the operating diagram and the effect of the third parameter are shown in a series of operating
diagrams.

This paper is organized as follows: in section \ref{sec2}, we present the mathematical model and recall the necessary and sufficient condition of existence and local, and global stability of its steady states. Next, in section \ref{sec3}, we present the operating diagram in the three-dimensional operating parameters space, in sections \ref{ODDconstant} and \ref{ODS2inconstant} we present the operating diagrams in two-dimensional operating parameters space  when one of the parameters is kept fixed. In section \ref{sec4}, we present some bifurcations diagram, with the dilution rate as the bifurcation parameter. Then, we conclude by discussing our results in section \ref{sec5}. Proofs and Tables are given in the appendix.
 
\section{Mathematical model}\label{sec2}

We consider the AM2 model of anaerobic digestion given in \cite{Bernard}, which takes the form of a two-step reactions (\ref{twostep}) where, in the first step, the organic substrate $S_1$ is consumed by the acidogenic bacteria $X_1$ and produces a substrate $S_2$ (Volatile Fatty Acids), while, in the second step, the methanogenic population $X_2$ consumes $S_2$ and produces methane. Let $D$ be the dilution rate, $S_{1{\rm in}}$ and $S_{2{\rm in}}$  the concentrations of influent substrate $S_{1}$ and $S_{2}$, respectively. The dynamical equations of the model take the form:
\begin{equation}
\label{AM2}
\begin{array}{lcl}
\dot{S}_1
&=&
D\left(S_{1{\rm in}}-S_1\right)-k_1\mu_1\left(S_1\right)X_1,\\
\dot{X}_1
&=&
\left(\mu_1\left(S_1\right)-\alpha D\right)X_1,\\
\dot{S}_2
&=&
D\left(S_{2{\rm in}}-S_2\right)+k_2\mu_1\left(S_1\right)X_1-k_3\mu_2\left(S_2\right)X_2,\\
\dot{X}_2
&=&
\left(\mu_2\left(S_2\right)-\alpha D\right)X_2,
\end{array}
\end{equation}
where $k_i$ are pseudo-stoichiometric coefficients associated to the bioreactions and
$\alpha\in[0,1]$ is a parameter allowing us to decouple the HRT (Hydraulic Retention Time) and the SRT (Solid Retention Time). 
In \cite{Bernard}, the kinetics $\mu_1$ and 
$\mu_2$ are of Monod and Haldane type, respectively:
\begin{equation}\label{MonodHaldane}
\mu_1\left(S_1\right)=\displaystyle\frac{m_{1}S_1}{K_1+S_1},\qquad
\mu_2\left(S_2\right)=\displaystyle\frac{m_{2}S_2}{K_2+S_2+\frac{S_2^2}{K_I}},
\end{equation}
The mass flow of the methane production, denoted by
$Q_{{\rm CH}_4}$, is proportional to the microbial activity, see \cite{BastinDochain}:
$$Q_{{\rm CH}_4}=k_4\mu_2\left(S_2\right)X_2$$ 
where $k_4$ is the coefficient in (\ref{twostep}). In this model the biogas is simply a product of the biological reactions and it has no feedback on the dynamical equations (\ref{AM2}).

Following \cite{Benyahia,Sbarciog}, we will consider (\ref{AM2}) with general $\mathcal{C}^1$  kinetics functions  $\mu_1$ and $\mu_2$ satisfying the following qualitative properties:

\noindent
{\em Hypothesis 1}.
$\mu_1(0)=0$, $\mu_1(+\infty)=m_1$ and $\mu'_1\left(S_1\right)>0$ 
for $S_1>0$.

\noindent
{\em Hypothesis 2}.
$\mu_2\left(0\right)=0$, $\mu_2(+\infty)=0$ and
there exists  $S_2^M>0$ such that $\mu'_2\left(S_2\right)>0$ 
for $0< S_2<S_2^M$, and
$\mu'_2\left(S_2\right)<0$ for $S_2>S_2^M$.

As it is usual in the mathematical theory of the chemostat, see for instance \cite{Sari2012}, we can use a change of variables that reduces the pseudo-stochiometric coefficients 
$k_i$ to 1. Indeed, the linear change of variables
$$s_1=\frac{k_2}{k_1}S_1,
\quad
x_1=k_2X_1,
\quad
s_2=S_2,
\quad
x_2=k_3X_2
$$
transforms (\ref{AM2}) into
\begin{equation}
\label{AM2reduit}
\begin{array}{lcl}
\dot{s}_1
&=&
D\left(s_{1{\rm in}}-s_1\right)-f_1\left(s_1\right)x_1,\\
\dot{x}_1
&=&
\left(f_1\left(s_1\right)-\alpha D\right)x_1,\\
\dot{s}_2
&=&
D\left(s_{2{\rm in}}-s_2\right)+f_1\left(s_1\right)x_1-f_2\left(s_2\right)x_2,\\
\dot{x}_2
&=&
\left(f_2\left(s_2\right)-\alpha D\right)x_2,
\end{array}
\end{equation}
where 
$$s_{1{\rm in}}=\frac{k_2}{k_1}S_{1{\rm in}},
\quad
s_{2{\rm in}}=S_{2{\rm in}},
\quad
f_1(s_1)=\mu_1\left(\frac{k_1}{k_2}s_1\right),
\quad
f_2(s_2)=\mu_2(s_2)
$$ 
However, since
the stoichiometric coefficients have their own importance for the biologist and since we are interested in giving the biologist a useful tool for the understanding of the role of the operating parameters, we do not make this reduction and we present the results in the original model (\ref{AM2}). This model can have at most six steady states, labeled below as in \cite{Benyahia}:
%=====================================================================
\begin{itemize}
\item $E_1^0$, where $X_1=0$ and $X_2=0$: the washout steady state where acidogenic and methanogenic bacteria are extinct.
%=====================================================================
\item $E_1^i$ ($i=1,2$), where $X_1=0$ and $X_2>0$: acidogenic bacteria are washed out, while  methanogenic bacteria are maintained.
%=====================================================================
\item $E_2^0$, where $X_1>0$ and $X_2=0$: methanogenic bacteria are washed out, while acidogenic bacteria are maintained.
%=====================================================================
\item $E_2^i$ ($i=1,2$), where $X_1>0$ and $X_2>0$: both 
acidogenic and  methanogenic bacteria are maintained.
\end{itemize}
%=====================================================================
%%%%%%%%%%%%%%%%%%%
%=====================================================================
\begin{table}[ht]
\caption{Auxiliary functions} \label{tableFunctions}
\vspace{-0.1cm}
\begin{center}
\begin{tabular}{ll}
\hline
\begin{tabular}{l}
$S_1^*(D)$
\end{tabular}
&
\begin{tabular}{l}
$S_1^*(D)$ is the unique solution 
of equation $\mu_1\left(S_1\right)=\alpha D$\\
It is defined for $0\leq D<D_1$,
where $D_1={m_{1}}/{\alpha}$\\
If $D\geq D_1$, by convention we let $S_1^*(D)=+\infty$
\end{tabular}
%=====================================================================
\\ \hline
\begin{tabular}{l}
$S_2^{i*}(D)$, $i=1,2$
\end{tabular}
&
\begin{tabular}{l}
$S_2^{1*}(D)<S_2^{2*}(D)$ are the solutions  
of equation $\mu_2\left(S_2\right)=\alpha D$\\
They are defined for $0\leq D\leq D_2$, 
where $D_2= {{\mu}_2\left(S_2^M\right)}/{\alpha}$\\ 
If $D=D_2$, one has 
$S_2^{1*}(D)=S_2^{2*}(D)$\\
If $D>D_2$, by convention we let $S_2^{1*}(D)=+\infty$
\end{tabular}
\\ \hline
%====================
\begin{tabular}{l}
$H_i(D)$, $i=1,2$
\end{tabular}
&
\begin{tabular}{l}
$H_i(D)=S_2^{i*}(D)+\frac{k_2}{k_1}S_1^*(D)$\\
It is defined for $0\leq D<\min(D_1,D_2)$
\end{tabular}
\\ \hline
%%%%%%%%%%%%%%%%%%%%%%%%%%%%%%%
\begin{tabular}{l}
${S}^*_{2{\rm in}}\left(D,S_{1{\rm in}},S_{2{\rm in}}\right)$
\end{tabular}
&
\begin{tabular}{l}
${S}^*_{2{\rm in}}\left(D,S_{1{\rm in}},S_{2{\rm in}}\right)=S_{2{\rm in}}+\frac{k_2}{k_1}\left(S_{1{\rm in}}-S_1^*(D)\right)$\\
It is defined for $0\leq D<D_1$ and $S_{1{\rm in}}>S_1^*(D)$
\end{tabular}
%==============================================
\\ \hline
\begin{tabular}{l}
$X_{1}^*\left(D,S_{1{\rm in}}\right)$
\end{tabular}
&
\begin{tabular}{l}
$X_{1}^*\left(D,S_{1{\rm in}}\right)=\frac{1}{k_1\alpha}\left(S_{1{\rm in}}-S_{1}^{*}(D)\right)$\\
It is defined for $0\leq D<D_1$ and $S_{1{\rm in}}>S_1^*(D)$
\end{tabular}
%===============================================
\\ \hline
%=====================================================================
\begin{tabular}{l}
%$X_{2}^{i}=
$X_{2}^{i}\left(D,S_{2{\rm in}}\right)$, $i=1,2$
\end{tabular}
&
\begin{tabular}{l}
$
X_{2}^{i}\left(D,S_{2{\rm in}}\right)=\frac{1}{k_3\alpha}
\left(S_{2{\rm in}}-S_{2}^{i*}(D)\right)$\\
It is defined for $0\leq D<D_2$ and $S_{2{\rm in}}>S_2^{i*}(D)$
\end{tabular}
\\ \hline
%=====================================================================
\begin{tabular}{l}
$X_{2}^{i*}\left(D,S_{1{\rm in}},S_{2{\rm in}}\right)$, $i=1,2$
\end{tabular}
&
\begin{tabular}{l}
$
X_{2}^{i*}\left(D,S_{1{\rm in}},S_{2{\rm in}}\right)=\frac{1}{k_3\alpha}
\left(S_{2{\rm in}}^*\left(D,S_{1{\rm in}},S_{2{\rm in}}\right)-S_{2}^{i*}(D)\right)$\\
It is defined for $0\leq D<\min(D_1,D_2)$, $S_{1{\rm in}}>S_1^*(D)$ and\\
$S_{2{\rm in}}+\frac{k_2}{k_1}S_{1{\rm in}}
>H_i(D)$
\end{tabular}
\\\hline
\end{tabular}
\end{center}
\vspace{-0.3cm}
\end{table}
%=====================================================================

For the description of the steady states, we need to define some auxiliary functions that are given in Table~\ref{tableFunctions}. For the particular case of Monod and Haldane functions (\ref{MonodHaldane}), the auxiliary functions can be computed analytically and are given in Table \ref{AuxiliaryFunctionsMonodHaldane}. We have the following result.
%===================================
\begin{propo}                                           \label{proSSi}
Assume that Hypotheses 1 and 2 hold. 
The steady states 
$E_1^0$, $E_1^i$ ($i=1,2$), $E_2^0$ and
$E_2^i$ ($i=1,2$) are given in Table \ref{tableSSi}, where 
$S_1^{*}$, $S_2^{i*}$, $S_{2{\rm in}}^{*}$, $X_1^{*}$, $X_2^{i}$, and $X_2^{i*}$, for $i=1,2$,
are defined in Table \ref{tableFunctions}.
Their conditions of existence and stability are given in Table \ref{TableSumExisStab}.
\end{propo}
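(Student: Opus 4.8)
The plan is to locate all steady states by direct algebra and then decide their stability through the block structure of the Jacobian. First I would set the right-hand sides of (\ref{AM2}) to zero. The equations $\dot X_1=0$ and $\dot X_2=0$ force, for each species, either extinction ($X_i=0$) or the break-even condition $\mu_i=\alpha D$, which produces exactly the four sign patterns listed before the statement. Under Hypothesis~1 the equation $\mu_1(S_1)=\alpha D$ has the unique root $S_1=S_1^*(D)$ (defined for $D<D_1$), while under Hypothesis~2 the equation $\mu_2(S_2)=\alpha D$ has the two roots $S_2^{1*}(D)\le S_2^{2*}(D)$ (defined for $D\le D_2$), which is the source of the superscript $i=1,2$. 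Substituting the surviving break-even values into $\dot S_1=0$ and $\dot S_2=0$ leaves \emph{linear} equations for the remaining coordinates: $\dot S_1=0$ gives $S_1=S_{1{\rm in}}$ when $X_1=0$ and $X_1=X_1^*(D,S_{1{\rm in}})$ when $\mu_1=\alpha D$, and then $\dot S_2=0$ gives in turn $S_2=S_{2{\rm in}}$, $S_2=S_{2{\rm in}}^*$, $X_2=X_2^i$ or $X_2=X_2^{i*}$, exactly as recorded in Table~\ref{tableFunctions} and Table~\ref{tableSSi}. The existence conditions are then simply the positivity of each biomass component together with the domains of the auxiliary functions; for instance $X_2^{i*}>0$ rewrites as $S_{2{\rm in}}+\frac{k_2}{k_1}S_{1{\rm in}}>H_i(D)$.

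\emph{Factoring the Jacobian.} The decisive observation is that the Jacobian of (\ref{AM2}) is block lower triangular for the splitting $(S_1,X_1)\,|\,(S_2,X_2)$, because the first two equations do not involve $(S_2,X_2)$. Its characteristic polynomial therefore factors as the product of the characteristic polynomials of the two diagonal blocks
\begin{equation*}
J_1=\begin{pmatrix} -D-k_1\mu_1'(S_1)X_1 & -k_1\mu_1(S_1)\\ \mu_1'(S_1)X_1 & \mu_1(S_1)-\alpha D\end{pmatrix},\qquad
J_2=\begin{pmatrix} -D-k_3\mu_2'(S_2)X_2 & -k_3\mu_2(S_2)\\ \mu_2'(S_2)X_2 & \mu_2(S_2)-\alpha D\end{pmatrix},
\end{equation*}
so a steady state is stable precisely when each $2\times2$ block has negative trace and positive determinant.

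\emph{Analysing the blocks.} For a block with its biomass absent ($X_i=0$) the eigenvalues are $-D$ and $\mu_i-\alpha D$, so stability is equivalent to $\mu_i<\alpha D$ at the relevant substrate value; monotonicity turns this into $S_{1{\rm in}}<S_1^*(D)$ for the $X_1=0$ blocks, while unimodality of $\mu_2$ turns it into $S_2<S_2^{1*}(D)$ or $S_2>S_2^{2*}(D)$ for the $X_2=0$ blocks. For a block with its biomass present ($\mu_i=\alpha D$) the trace is $-D-k\mu_i'X_i$ and the determinant collapses to $k\,\mu_i(S_i)\mu_i'(S_i)X_i$ (with $k=k_1$ or $k_3$), so both are governed by the sign of $\mu_i'$: the acidogenic block is always stable when $X_1>0$ (here $\mu_1'>0$), whereas the methanogenic block is stable on the increasing branch $S_2=S_2^{1*}$ and a saddle on the decreasing branch $S_2=S_2^{2*}$. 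Intersecting the two block conditions then yields every entry of the stability column of Table~\ref{TableSumExisStab}: $E_2^1$ is stable wherever it exists, $E_1^2$ and $E_2^2$ are always unstable, and the thresholds for $E_1^0$, $E_1^1$ and $E_2^0$ are the rewritten inequalities involving $S_1^*$, $S_2^{i*}$, $S_{2{\rm in}}^*$ and $H_i$.

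\emph{Where the difficulty lies.} Each computation above is routine; the real effort is the bookkeeping that converts the sign conditions on $\mu_1-\alpha D$ and $\mu_2-\alpha D$ into the clean inequalities on $S_{1{\rm in}}$, $S_{2{\rm in}}$ and $H_i$ of Table~\ref{TableSumExisStab}, keeping the unimodality of $\mu_2$ straight so that the correct branch is attached to the correct steady state. The genuinely delicate points are the degenerate boundaries ($D=D_1$, $D=D_2$, or equality in an existence condition), where a block eigenvalue vanishes and steady states collide in transcritical or saddle-node bifurcations; these need a separate argument and are precisely the curves that will bound the regions of the operating diagram. If global stability is also asserted, I would obtain it from the cascade structure, treating $(S_1,X_1)$ as a stand-alone chemostat that converges and then applying the theory of asymptotically autonomous systems to the driven $(S_2,X_2)$ subsystem --- the most demanding step analytically.
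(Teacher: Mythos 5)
Your proof is correct, but it takes a genuinely different route from the paper's. The paper does not redo the steady-state computation or the linearization at all: its proof of Proposition \ref{proSSi} consists of importing Proposition 1 and Table A.1 of \cite{Benyahia}, where the steady states of Table \ref{tableSSi} and their existence and local stability conditions were already established (essentially by the cascade argument you give), and the only new mathematical content is Lemma \ref{lem1}, the one-line algebraic equivalence
$S^*_{2{\rm in}}\left(D,S_{1{\rm in}},S_{2{\rm in}}\right)\gtrless S_2^{i*}(D)
\Longleftrightarrow
S_{2{\rm in}}+\frac{k_2}{k_1}S_{1{\rm in}}\gtrless H_i(D)$,
which converts the conditions of \cite{Benyahia} into the $H_i$-form of Table \ref{TableSumExisStab}. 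You re-derive the imported material from scratch: the break-even analysis yielding Table \ref{tableSSi}, and the block lower-triangular Jacobian for the splitting $(S_1,X_1)\,|\,(S_2,X_2)$, with the decisive sign observation that at a survival steady state the determinant of a diagonal block collapses to $k\,\mu_i\mu_i' X_i$, so everything is decided by the branch of $\mu_2$ --- this is exactly why $E_2^1$ is stable and $E_1^2$, $E_2^2$ are saddles whenever they exist. What each approach buys: yours is self-contained and makes the stability mechanism transparent; the paper's is economical, tying the new table directly to the earlier reference so that only the $H_i$ reformulation needs proof --- and note that you reproduced that reformulation in passing (your remark that $X_2^{i*}>0$ rewrites as $S_{2{\rm in}}+\frac{k_2}{k_1}S_{1{\rm in}}>H_i(D)$ is precisely Lemma \ref{lem1}). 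Your caveat about the degenerate boundaries is consistent with the paper, which handles those non-hyperbolic cases separately in Proposition \ref{proBifurcations}, and your closing remark on global stability is not needed for this statement, since Table \ref{TableSumExisStab} asserts only local stability (the GAS claims appear in Proposition \ref{proIk} and rest on the global results of \cite{Benyahia}).
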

%=====================================================================
\begin{proof}
The proof is given in Appendix \ref{ProofproSSi}.	
\end{proof}

%=====================================
\begin{table}[ht]
\caption{The steady states of (\ref{AM2}). $S_1^{*}$, $S_2^{i*}$, 
$S_{2{\rm in}}^{*}$,
$X_1^{*}$, $X_2^{i}$ and $X_2^{i*}$
are defined in Table \ref{tableFunctions}.} \label{tableSSi}
\vspace{-0.2cm}
\begin{center}
\begin{tabular}{lllll}
\hline
$E_1^0$  &
$S_1=S_{1{\rm in}}$ &
$S_2=S_{2{\rm in}}$ &
$X_1=0$ & 
$X_2=0$
\\ 
%=====================================================================
$E_1^i$, $i=1,2$  &
$S_1=S_{1{\rm in}}$ &
$S_2=S_2^{i*}$ &
$X_1=0$ & 
$X_2=X_{2}^{i}$
\\
%=====================================================================
$E_2^0$         &
$S_1=S_{1}^{*}$ &
$S_2={S}^*_{2{\rm in}}$ &
$X_1=X_{1}^*$ &                                               
$X_2=0$
\\ 
%=====================================================================
$E_2^i$, $i=1,2$        &
$S_1 =S_{1}^{*}$ &
$S_2=S_2^{i*}$ &
$X_1=X_{1}^*$ &
$X_2=X_{2}^{i*}$
\\
\hline
\end{tabular}
	\end{center}
\end{table}
%=====================================================================

%=====================================================================

%=====================================================================
\begin{table}[ht]
\caption{Necessary and sufficient conditions of existence and local stability of steady states of (\ref{AM2}). $S_1^{*}(D)$, $S_2^{i*}(D)$ and 
$H_i(D)$ are defined in Table \ref{tableFunctions}. } \label{TableSumExisStab}
\begin{center}	
\begin{tabular}		{lll}
\hline
                        &  Existence conditions                          &    Stability conditions
                        \\\hline
%=====================================================================
$E_1^0$
&
Always exists
&
$S_{1{\rm in}}<S_1^*(D)$ and $S_{2{\rm in}}\notin\left[S_2^{1*}(D),S_2^{2*}(D)\right]$\\
$E_1^1$
&
$S_{2{\rm in}}>S_2^{1*}(D)$&
$S_{1{\rm in}}<S_1^*(D)$\\
$E_1^2$
&
$S_{2{\rm in}}>S_2^{2*}(D)$
&
Unstable if it exists\\
$E_2^0$
&
$S_{1{\rm in}}>S_1^*(D)$
&
$S_{2{\rm in}}+\frac{k_2}{k_1}S_{1{\rm in}}
\notin\left[H_1(D),H_2(D)\right]$\\
$E_2^1$
&
$S_{1{\rm in}}>S_1^*(D)$ and 
$S_{2{\rm in}}+\frac{k_2}{k_1}S_{1{\rm in}}>H_1(D)$
&
Stable if it exists\\
$E_2^2$
&
$S_{1{\rm in}}>S_1^*(D)$ and 
$S_{2{\rm in}}+\frac{k_2}{k_1}S_{1{\rm in}}>H_2(D)$
& 
Unstable if it exists\\
\hline
\end{tabular}
	\end{center}
\end{table}
%=====================================================================

\begin{rem}
In Table \ref{TableSumExisStab}, since the function $S_1^*$ is defined on $(0,D_1)$, the condition 
$S_{1{\rm in}}>S_1^*(D)$ means 
$0<D<D_1$ and $S_{1{\rm in}}>S_1^*(D)$. Conversely, since by convention $S_1^*(D)=+\infty$ for $D\geq D_1$, the condition  $S_{1{\rm in}}<S_1^*(D)$ means 
$D\geq D_1$ and $S_{1{\rm in}}>0$ or 
$0<D<D_1$ and $0<S_{1{\rm in}}<S_1^*(D)$. On the other hand, since 
the function $S_2^{i*}$ is defined on $(0,D_2)$, the condition 
$S_{2{\rm in}}>S_2^{i*}(D)$ means 
$0<D<D_2$ and $S_{2{\rm in}}>S_2^{i*}(D)$ and, conversely, since by convention 
$S_2^{1*}(D)=+\infty$ for $D> D_2$, the condition 
$S_{2{\rm in}}\notin\left[S_2^{1*}(D),S_2^{2*}(D)\right]$ means 
$D\geq D_2$ and $S_{2{\rm in}}>0$ or
$0<D< D_2$ and $S_{2{\rm in}}\notin\left[S_2^{1*}(D),S_2^{2*}(D)\right]$. Similar remarks can be made concerning the conditions involving functions $H_i(D)$, $i=1,2$.
\end{rem}

\section{Operating diagram}\label{sec3}
Let us consider the surfaces $\Gamma_i$, $i=1\cdots 6$, defined by Table \ref{Gamma}. 
Notice that  $S_2^{1*}(D)<S_2^{2*}(D)$ for $0<D<D_2$ and equality holds for $D=D_2$. Similarly
$H_{1}(D)<H_{2}(D)$ for $0<D<\min(D_1,D_2)$,
and equality holds for $D=\min(D_1,D_2)$. Therefore, the $\Gamma_i$ surfaces separate the operating space 
$(D,S_{1{\rm in}},S_{2{\rm in}})$ into nine regions, denoted $\mathcal{I}_k$, $k=0\cdots8$, and defined in 
Table \ref{Table9Regions}. 
These regions of the operating parameters space 
$(D,S_{1{\rm in}},S_{2{\rm in}})$ are corresponding to different system behaviors, as stated in the following result.  

%%%%%%%%%%%%%%%%%%%%%%%%%%%%%
\begin{table}[ht]
\caption{The surfaces $\Gamma_i$, $i=1\cdots 6$. }\label{Gamma}
\begin{center}	
\begin{tabular}{l}
\hline
$\Gamma_1=
\left\{(D,S_{1{\rm in}},S_{2{\rm in}}):0< D<D_1\mbox{ and }S_{1{\rm in}}=S_1^*(D)\right\}
$
\\
$\Gamma_2=
\left\{(D,S_{1{\rm in}},S_{2{\rm in}}):0< D<D_2\mbox{ and }S_{2{\rm in}}=S_2^{1*}(D)\right\}$
\\
$\Gamma_3=
\left\{(D,S_{1{\rm in}},S_{2{\rm in}}):0< D<D_2\mbox{ and }S_{2{\rm in}}=S_2^{2*}(D)\right\}$
\\
Notice that\\
\qquad$\Gamma_1=
\left\{(D,S_{1{\rm in}},S_{2{\rm in}}):
S_{1{\rm in}}>0\mbox{ and }\alpha D=\mu_1\left(S_{1{\rm in}}\right)\right\}$\\ 
\qquad $\Gamma_2\cup \Gamma_3=
\left\{(D,S_{1{\rm in}},S_{2{\rm in}}):
S_{2{\rm in}}>0\mbox{ and }\alpha D=\mu_2\left(S_{2{\rm in}}\right)\right\}$
\\
$\Gamma_4=
\left\{(D,S_{1{\rm in}},S_{2{\rm in}}):0< D<\min(D_1,D_2),
S_{1{\rm in}}>S_1^*(D)
\mbox{ and }
S_{2{\rm in}}+\frac{k_2}{k_1}S_{1{\rm in}}=H_1(D)\right\}$
\\
$\Gamma_5=
\left\{(D,S_{1{\rm in}},S_{2{\rm in}}):0< D<\min(D_1,D_2),
S_{1{\rm in}}>S_1^*(D) \mbox{ and }
S_{2{\rm in}}+\frac{k_2}{k_1}S_{1{\rm in}}=H_2(D)\right\}$
\\
$\Gamma_6=
\left\{(D,S_{1{\rm in}},S_{2{\rm in}}):D=D_2%\mbox{ and }S_{2{\rm in}}\geq S_2^M
\right\}$\\
\hline
\end{tabular}
\end{center}
\end{table}
%%%%%%%%%%%%%%%%%%%%%%%%%%%%%% 

%=============================
\begin{table}[ht]
\caption{Definitions of the nine regions corresponding to the nine cases in \cite{Benyahia}.}\label{Table9Regions}
\begin{center}
\begin{tabular}{c|c|l}
\hline
Case of \cite{Benyahia}&
{Region} 
  & Definition
  \\ 
  \hline
{\bf 1.1} 
  &$\mathcal{I}_0$
  & $S_{1{\rm in}}\!<\!S_1^*(D)$ and $S_{2{\rm in}}<S_2^{1*}(D)$ 
\\
{\bf 1.2}
&$\mathcal{I}_1$
&$S_{1{\rm in}}\!<\!S_1^*(D)$ and $S_2^{1*}(D)\!<\!S_{2{\rm in}}\!\leq\!S_2^{2*}(D)$
\\
{\bf 1.3}
&$\mathcal{I}_2$
&$S_{1{\rm in}}\!<\!S_1^*(D)$ and 
$S_{2{\rm in}}>S_2^{2*}(D)$ 
\\ \hline
   {\bf 2.1}
&$\mathcal{I}_3$  
  &
  $S_{1{\rm in}}\!>\!S_1^*(D)$ and 
$S_{2{\rm in}}+\frac{k_2}{k_1}S_{1{\rm in}}<H_1(D)$  
\\
{\bf 2.2}
&$\mathcal{I}_4$
&
$S_{1{\rm in}}\!>\!S_1^*(D)$, 
$S_{2{\rm in}}\!\leq\!S_2^{1*}(D)$
and
$H_1(D)\!<S_{2{\rm in}}\!+\!\frac{k_2}{k_1}S_{1{\rm in}}\!\leq\!H_2(D)$ 
\\
{\bf 2.3}
&$\mathcal{I}_5$
&
$S_{1{\rm in}}\!>\!S_1^*(D)$,
$S_{2{\rm in}}\!\leq\!S_2^{1*}(D)$
and
$S_{2{\rm in}}\!+\!\frac{k_2}{k_1}S_{1{\rm in}}\!>\!H_2(D)$ 
\\
{\bf 2.4}
&$\mathcal{I}_6$
&
$S_{1{\rm in}}\!>\!S_1^*(D)$,
$S_{2{\rm in}}\!>\!S_2^{1*}(D)$
and
$S_{2{\rm in}}\!+\!\frac{k_2}{k_1}S_{1{\rm in}}\!\leq\!H_2(D)$ 
\\
{\bf 2.5}
&$\mathcal{I}_7$
&
$S_{1{\rm in}}\!>\!S_1^*(D)$,
$S_2^{1*}(D)<S_{2{\rm in}}\!\leq\!
S_2^{2*}(D)$
and
$S_{2{\rm in}}\!+\!\frac{k_2}{k_1}S_{1{\rm in}}\!>\!H_2(D)$ 
\\
{\bf 2.6}
&$\mathcal{I}_8$
&
$S_{1{\rm in}}\!>\!S_1^*(D)$ and $S_{2{\rm in}}\!>\!S_2^{2*}(D)$   
  \\
 \hline
\end{tabular}  
\end{center}
  \end{table}
%====================================

\begin{propo}                                           \label{proIk}
Assume that Hypotheses 1 and 2 hold. 
The existence and stability properties of the steady states of (\ref{AM2}) are given in Table \ref{Table9cases}, where the regions $\mathcal{I}_k$, $k=0\cdots 8$ 
are defined in Table \ref{Table9Regions}.
\end{propo}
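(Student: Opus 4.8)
The statement is essentially a reorganization of the information already contained in Proposition~\ref{proSSi} (equivalently Table~\ref{TableSumExisStab}): the latter records, for each of the six candidate steady states, its necessary and sufficient existence condition and its stability condition, each expressed through the threshold functions $S_1^*(D)$, $S_2^{1*}(D)$, $S_2^{2*}(D)$, $H_1(D)$, $H_2(D)$. The plan is therefore to show that the defining inequalities of the nine regions $\mathcal{I}_k$ of Table~\ref{Table9Regions} pin down, in each region, exactly which of these conditions hold, and then to read off Table~\ref{Table9cases}. No new dynamical input is needed beyond Proposition~\ref{proSSi}; the work is a disjunction-of-cases argument.

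First I would verify that the $\mathcal{I}_k$ genuinely partition the operating space $(D,S_{1{\rm in}},S_{2{\rm in}})$. This uses the orderings $S_2^{1*}(D)<S_2^{2*}(D)$ for $0<D<D_2$ and $H_1(D)<H_2(D)$ for $0<D<\min(D_1,D_2)$ noted just before the statement, together with the conventions of the Remark (setting $S_1^*=+\infty$ for $D\ge D_1$ and $S_2^{1*}=+\infty$ for $D>D_2$) so that the degenerate ranges $D\ge D_1$ and $D\ge D_2$ are absorbed into $\mathcal{I}_0,\mathcal{I}_1,\mathcal{I}_2$ without overlap, and so that each boundary surface $\Gamma_i$ is assigned unambiguously through the half-open intervals appearing in the definitions.

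The key bridging observation, which makes the combined conditions in $\mathcal{I}_3,\dots,\mathcal{I}_8$ consistent, is that $H_i(D)=S_2^{i*}(D)+\tfrac{k_2}{k_1}S_1^*(D)$, so whenever $S_{1{\rm in}}>S_1^*(D)$ one has
$$S_{2{\rm in}}>S_2^{i*}(D)\ \Longrightarrow\ S_{2{\rm in}}+\tfrac{k_2}{k_1}S_{1{\rm in}}>S_2^{i*}(D)+\tfrac{k_2}{k_1}S_1^*(D)=H_i(D),$$
with the reverse implication for the corresponding $\leq$ inequalities. In words, on the half-space where the acidogenic population is maintained, the existence of $E_1^i$ already forces the existence of $E_2^i$; this is exactly what reconciles the condition on $S_{2{\rm in}}$ (governing the $E_1^i$ family) with the condition on $S_{2{\rm in}}+\tfrac{k_2}{k_1}S_{1{\rm in}}$ (governing the $E_2^i$ family) in Table~\ref{Table9Regions}.

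With these preliminaries I would split the analysis into the two half-spaces. For $S_{1{\rm in}}<S_1^*(D)$ (regions $\mathcal{I}_0,\mathcal{I}_1,\mathcal{I}_2$) all of $E_2^0,E_2^1,E_2^2$ fail to exist, $E_1^0$ always exists and is stable precisely when $S_{2{\rm in}}\notin[S_2^{1*},S_2^{2*}]$, while the stability of $E_1^1$ is automatic and $E_1^2$ is unstable; sorting on the position of $S_{2{\rm in}}$ relative to $S_2^{1*},S_2^{2*}$ yields the three rows. For $S_{1{\rm in}}>S_1^*(D)$ (regions $\mathcal{I}_3,\dots,\mathcal{I}_8$) $E_1^0$ is unstable and $E_1^1,E_1^2$ are unstable whenever they exist, $E_2^0$ exists and is stable exactly when $S_{2{\rm in}}+\tfrac{k_2}{k_1}S_{1{\rm in}}\notin[H_1,H_2]$, $E_2^1$ is stable whenever it exists, and $E_2^2$ is unstable; combining the $S_{2{\rm in}}$-thresholds with the $H_i$-thresholds, via the implication above, gives the six rows. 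I expect the only delicate point to be the bookkeeping in $\mathcal{I}_4,\mathcal{I}_5,\mathcal{I}_6,\mathcal{I}_7$, where the two threshold families interact: one must check that each stated combination is non-empty and leaves no steady state's status ambiguous (for instance, that in $\mathcal{I}_6$ the constraint $S_{2{\rm in}}+\tfrac{k_2}{k_1}S_{1{\rm in}}\leq H_2$ together with $S_{1{\rm in}}>S_1^*$ forces $S_{2{\rm in}}<S_2^{2*}$, so that $E_1^2$ is excluded). Everything else is a mechanical substitution into Table~\ref{TableSumExisStab}.
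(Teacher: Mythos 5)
Your combinatorial reduction is sound as far as it goes, and its core coincides with the paper's own mechanism: your ``bridging observation'' is exactly Lemma~\ref{lem1} of the paper, which converts conditions on ${S}^*_{2{\rm in}}$ versus $S_2^{i*}(D)$ into conditions on $S_{2{\rm in}}+\frac{k_2}{k_1}S_{1{\rm in}}$ versus $H_i(D)$. The paper organizes the case split through the nine cases of Theorem~1 of \cite{Benyahia} (its Table of cases {\bf 1.1}--{\bf 2.6}) rather than directly through Table~\ref{TableSumExisStab}, but that difference is cosmetic; your half-space decomposition on $S_{1{\rm in}} \lessgtr S_1^*(D)$ and your consistency check in $\mathcal{I}_6$ (that $S_{2{\rm in}}+\frac{k_2}{k_1}S_{1{\rm in}}\leq H_2(D)$ together with $S_{1{\rm in}}>S_1^*(D)$ forces $S_{2{\rm in}}<S_2^{2*}(D)$, excluding $E_1^2$) are correct and match what the paper does implicitly.

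The genuine gap is your opening claim that ``no new dynamical input is needed beyond Proposition~\ref{proSSi}.'' Proposition~\ref{proSSi} and Table~\ref{TableSumExisStab} deliver only \emph{local} stability, whereas Table~\ref{Table9cases} asserts \emph{global} asymptotic stability of $E_1^0$, $E_1^1$, $E_2^0$ and $E_2^1$ in regions $\mathcal{I}_0$, $\mathcal{I}_1$, $\mathcal{I}_3$, and $\mathcal{I}_4\cup\mathcal{I}_6$ respectively. Uniqueness of a locally stable steady state in a four-dimensional system does not by itself rule out periodic orbits or unbounded trajectories, so the GAS entries cannot be ``read off'' from Table~\ref{TableSumExisStab} by any disjunction of cases. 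The paper's proof supplies this missing dynamical input by invoking the global analysis of \cite{Benyahia} (section~2.4 there, which exploits the cascade structure of (\ref{AM2})): outside the bistability cases the unique locally stable steady state is in fact GAS, and in the bistability regions $\mathcal{I}_2$, $\mathcal{I}_5$, $\mathcal{I}_7$, $\mathcal{I}_8$ the unstable state ($E_1^2$ or $E_2^2$) is a saddle whose stable manifold is a three-dimensional hypersurface separating the phase space into the basins of attraction of the two stable states. To complete your argument you must either cite that global result or prove it, e.g.\ by treating the $(S_1,X_1)$-subsystem first and then the $(S_2,X_2)$-subsystem via the theory of asymptotically autonomous systems; without this step your proof establishes only the existence and S/U columns of Table~\ref{Table9cases}, not its GAS entries.
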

%=====================================================================
\begin{proof}
The proof is given in Appendix \ref{ProofproIk}.	
\end{proof}
%%%%%%%%%%%%%%%%%%%%%%%%

\begin{table}[ht]
\caption{Existence and stability of steady states of (\ref{AM2}) in the nine regions of the operating space. GAS, S and U stand for
{\em Globally asympottically stable}, {\em Locally exponentially stable} and {\em Unstable} respectively. The last colum show the color in which the region is depiced in Figs.  \ref{S1inS2inBenyahia6},  \ref{S1inS2inBenyahia5},  \ref{S1inS2inBenyahia4}, \ref{DS1inBenyahia5}, \ref{DS1inBenyahia4}  and \ref{figBifS1inBenyahia5S2in0}.}\label{Table9cases}
\begin{center}
\begin{tabular}{c|cccccc|l}
\hline
{\bf Region} 
 &$E_{1}^{0}$ &$E_{1}^{1}$ & $E_{1}^{2}$ & $E_{2}^{0}$ & $E_{2}^{1}$ & 
 $E_{2}^{2}$
  & Color
  \\ 
  \hline
$\mathcal{I}_0$
  & GAS& &&&&
  &Red\\
$\mathcal{I}_1$
& U& GAS&&&&
&Blue\\
$\mathcal{I}_2$
&S&S&U&&&
&Cyan\\
$\mathcal{I}_3$  
& U & & & GAS & & 
& Yellow \\
$\mathcal{I}_4$
& U & & & U & GAS & 
& Green\\
$\mathcal{I}_5$
& U & & & S & S & U 
& Pink\\
$\mathcal{I}_6$
& U & U && U & GAS & 
& Green\\
$\mathcal{I}_7$
& U & U & & S & S & U 
& Pink \\
$\mathcal{I}_8$
& U & U & U & S & S & U 
& Pink  
  \\
 \hline
\end{tabular}  
\end{center}
  \end{table}
%%%%%%%%%%%%%%%%%

\begin{rem}\label{remarkcolors}
In in Figs.  \ref{S1inS2inBenyahia6},  \ref{S1inS2inBenyahia5},  \ref{S1inS2inBenyahia4}, \ref{DS1inBenyahia5} and \ref{DS1inBenyahia4} presenting operating diagrams, a region is colored according to the color in 
Table \ref{Table9cases}. 
Each color corresponds to different asymptotic behavior: 
\begin{itemize}
\item
Red for the washout of both species, that is, the steady state $E_1^0$ is Globally asymptotically stable (GAS), which occurs in region $\mathcal{I}_0$.
\item 
Blue for the washout of acidogenic bacteria while methanogenic bacteria are maintained, that is, the steady state $E_1^1$ is GAS, which occurs in region $\mathcal{I}_1$. 
\item
Cyan for the bistability of
$E_1^0$ and $E_1^1$ which are both (locally) stable. This behavior  occurs in region $\mathcal{I}_1$. Depending on the initial condition the system can go to the washout of both species or the washout of only the acidogenic bacteria.
\item 
Yellow for the washout of methanogenic bacteria while acidogenic bacteria are maintained, that is the steady state $E_2^0$ is GAS, which occurs in region $\mathcal{I}_3$.
\item 
Green for the global asymptotic stability of the positive steady state $E_2^1$, which occur in  $\mathcal{I}_4$ and  $\mathcal{I}_6$. These regions differ only by the existence, in the second region, of the unstable boundary steady state $E_1^1$.
\item
Pink for the bistability of
$E_2^0$ and $E_2^1$ which are both locally asymptotically stable. This behavior  occurs in regions $\mathcal{I}_5$, $\mathcal{I}_7$ and $\mathcal{I}_8$. 
These regions differ only by the possible existence of the unstable boundary steady states $E_1^1$ or $E_1^2$.
Depending on the initial condition the system can go to the washout of methanogenic bacteria or the coexistence of both species.
\end{itemize}
\end{rem}
%%%%%%%%%%%%%%%
%======================================
\begin{figure}[ht]
\setlength{\unitlength}{1.0cm}
\begin{center}
\begin{picture}(8.5,6.5)(0,-0.2)		
\put(-4,0){{\includegraphics[scale=0.25]{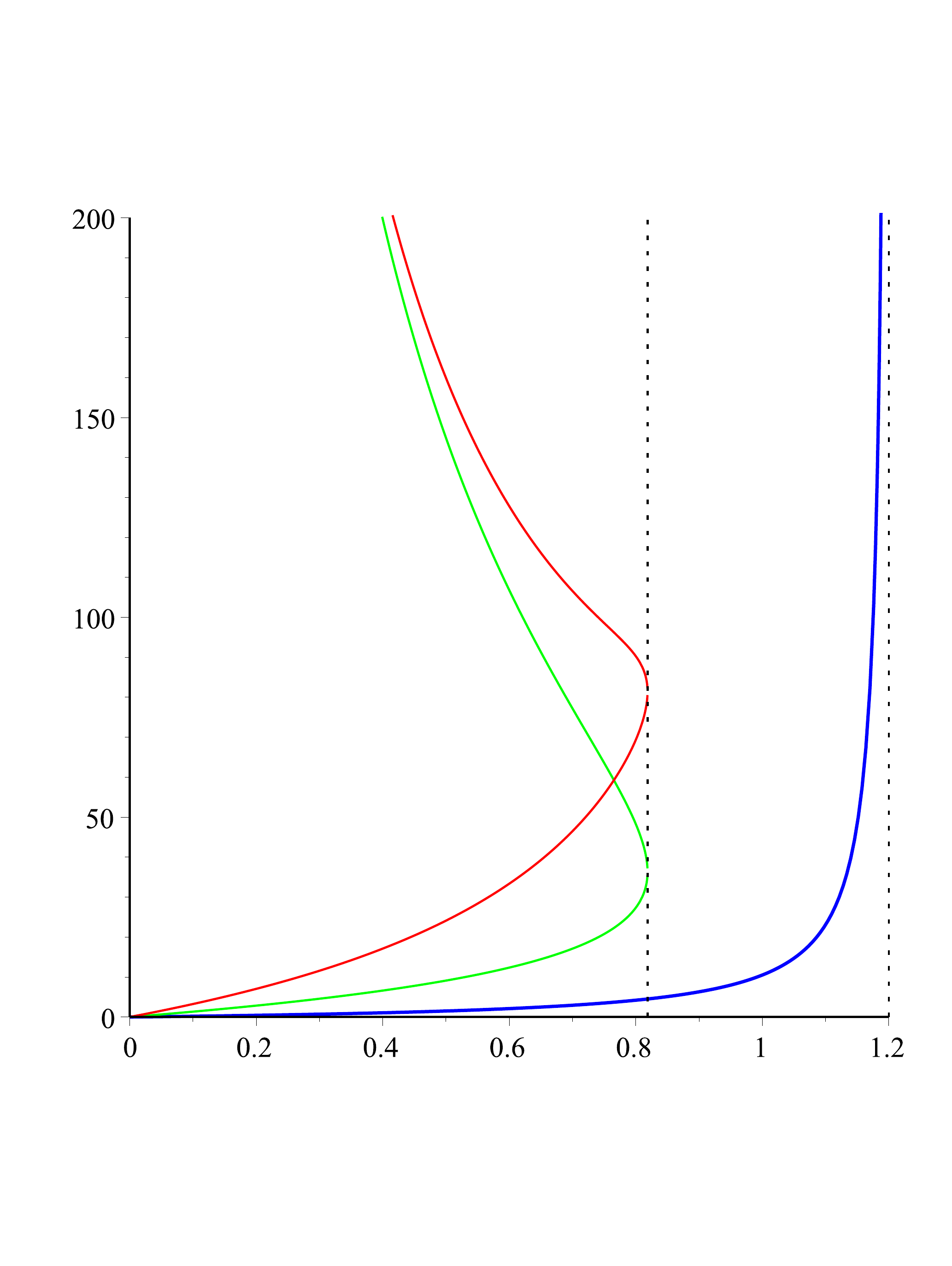}}}
\put(1.5,0){{\includegraphics[scale=0.25]{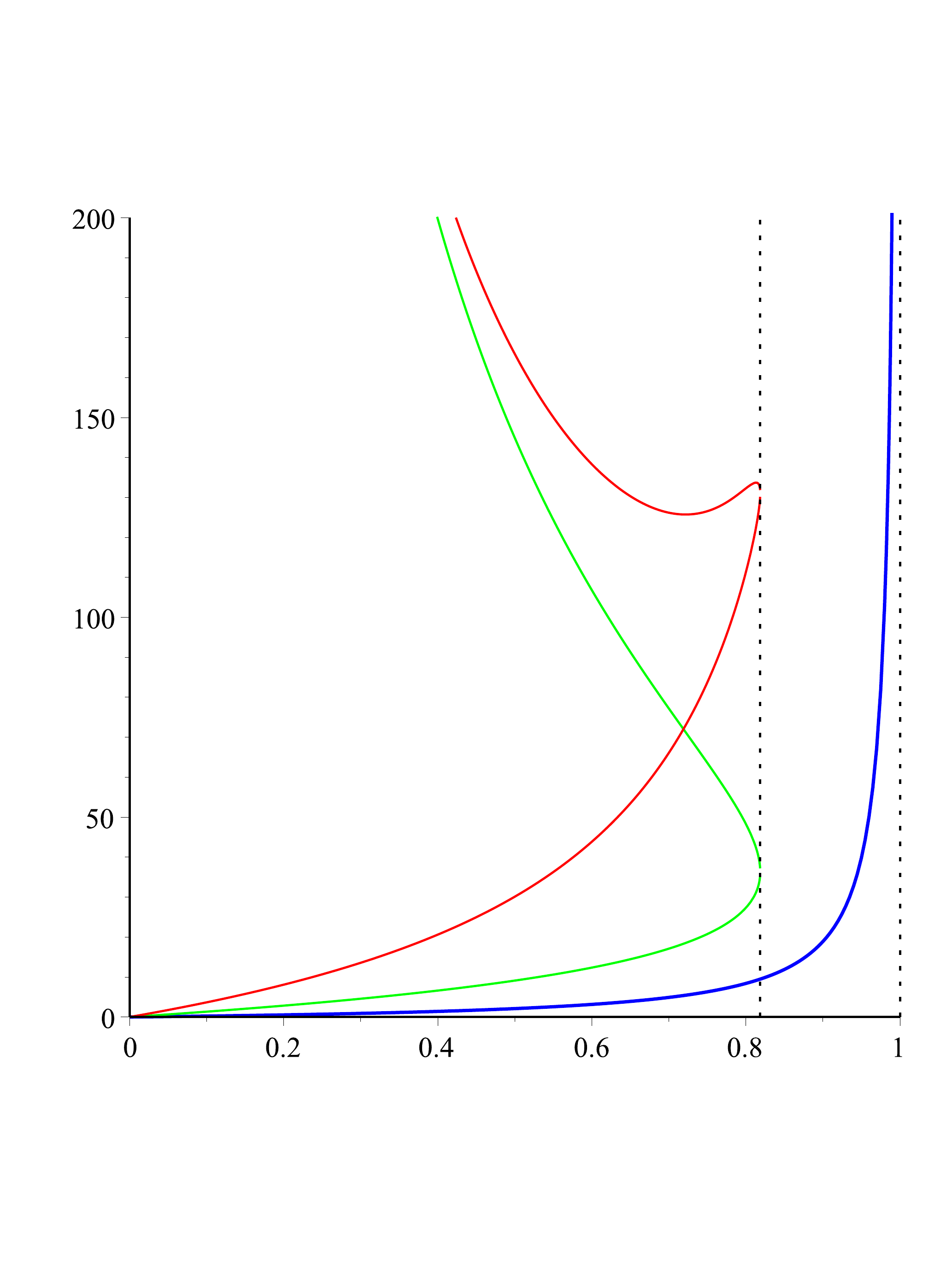}}}
\put(7,0){{\includegraphics[scale=0.25]{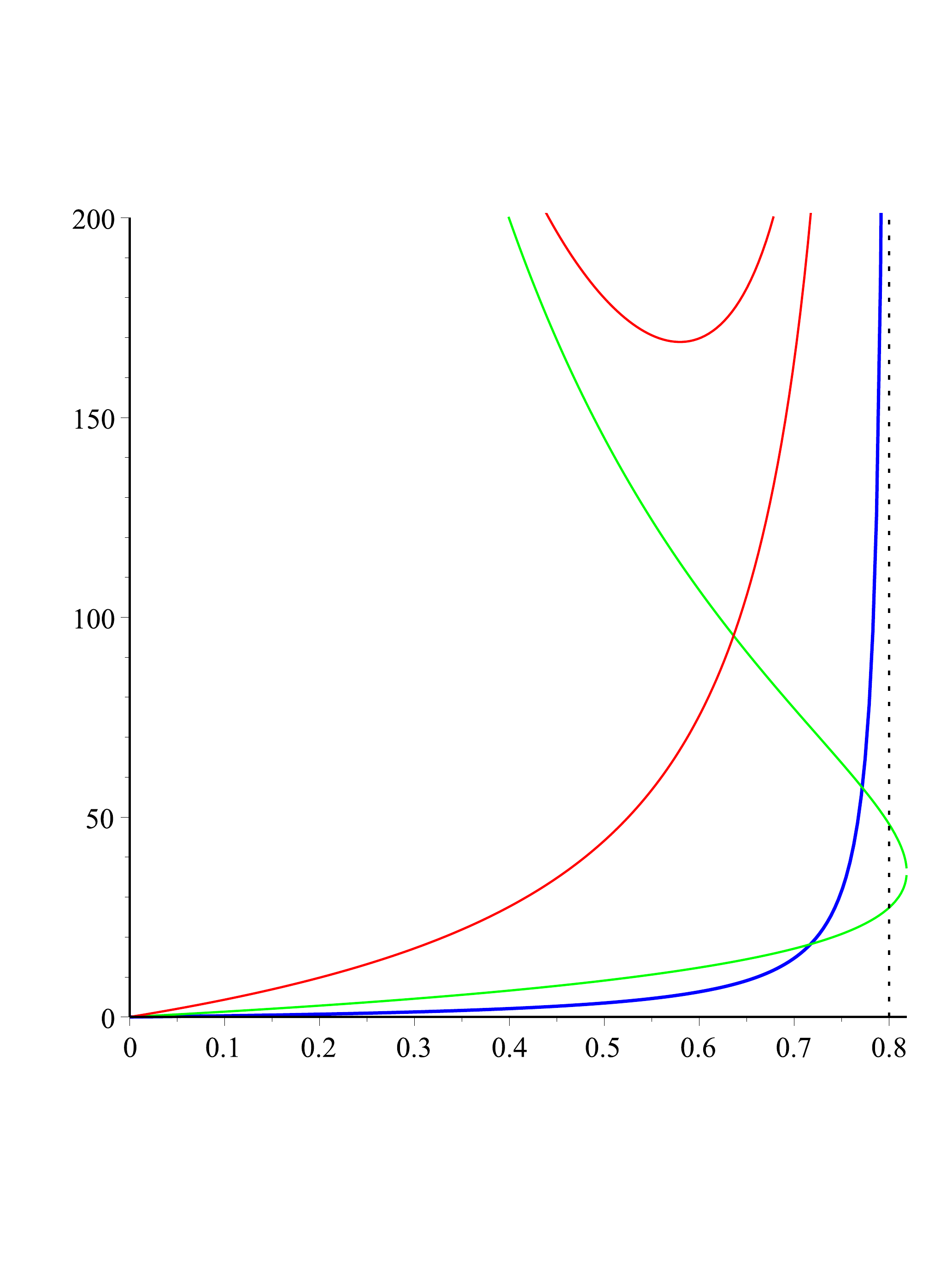}}}
%==============================================
\put(-3.5,0.5){(a) $D_1>D_2$, $dH_2\!/dD<0$}
\put(1.5,0.5){(b) $D_1>D_2$, $H_2$ non monotonous }	
\put(9,0.5){(c) $D_1<D_2$}	
\put(-1,0.9){$D$}
\put(4.5,0.9){$D$}	
\put(10,0.9){$D$}	
\put(-3.7,4){$S$}
\put(1.8,4){$S$}	
\put(7.3,4){$S$}
%%%%%%%%%%%%%%%%%%%%%
\put(-1.3,1.6){{$S_2^{1*}$}}
\put(-2,4){{$S_2^{2*}$}}
\put(0.4,4){{$S_1^{*}$}}
\put(-1.7,2){{$H_{1}$}}
\put(-1.6,5.2){{$H_{2}$}}
%%%%%%%%%%%%%%%%%%%%%%%%%%%%%%
\put(4.9,1.7){{$S_2^{1*}$}}
\put(3.9,4){{$S_2^{2*}$}}
\put(5.9,4){{$S_1^{*}$}}
\put(3.8,2){{$H_{1}$}}
\put(4.3,5.2){{$H_{2}$}}
%%%%%%%%%%%%%%%%%%%%%%%%
\put(10.2,1.7){{$S_2^{1*}$}}
\put(9.9,4){{$S_2^{2*}$}}
\put(11.4,4){{$S_1^{*}$}}
\put(9.4,2){{$H_{1}$}}
\put(10.5,5.2){{$H_{2}$}}
\end{picture}
\end{center}
\vspace{-0.9cm}
\caption{The graphs of functions 
$S=S_1^*(D)$ (in Blue), $S=S_2^{i*}(D)$, $i=1,2$ (in Green) and
$S=H_{i}(D)3$, $i=1,2$ (in Red). (a): $m_1=0.6$; (b): $m_1=0.5$ (c): $m_1=0.4$. Other biological parameter values are given in Table \ref{parametervalues}. Compare with Fig.~4 of \cite{Sbarciog}} \label{figHi}
\end{figure}
%=======================================

%%%%%%%%%%%%%%%%%%%%%%
\begin{table}[ht]
\caption{Three behaviors for functions $H_i$, $i=1,2$}\label{Table3cases} 
\begin{center}
\begin{tabular}{l}
\hline
Case (A),  where $D_1>D_2$ and $dH_2\!/dD<0$.\\
Case (B), where $D_1>D_2$ and $H_2$ non monotonous.\\
Case (C), where where $D_1<D_2$.
\\	 
\hline
\end{tabular}
\end{center}
\end{table}
%%%%%%%%%%%%%%%%%%%%%

%===========================
\begin{figure}[ht]
\setlength{\unitlength}{1.0cm}
\begin{center}
\begin{picture}(8.5,14.5)(0,0)		
\put(-3,7.5){{\includegraphics[scale=0.5]{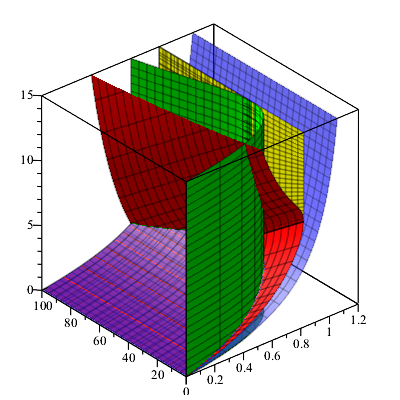}}}
\put(5,7.5){{\includegraphics[scale=0.5]{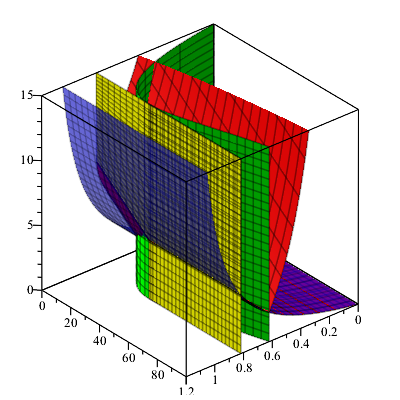}}}
\put(-3,0.5){{\includegraphics[scale=0.5]{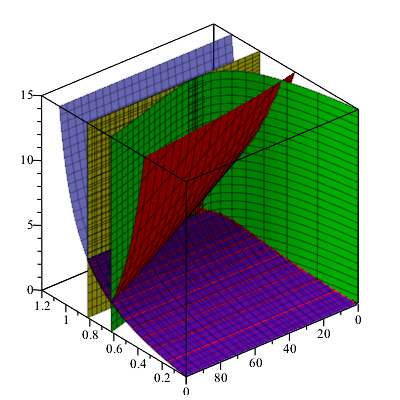}}}
\put(5,0.5){{\includegraphics[scale=0.5]{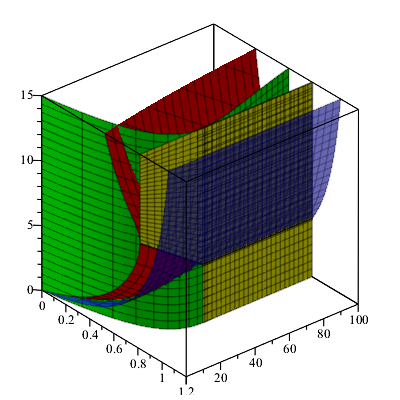}}}
%==============================================
\put(-3,8.5){(a)}
\put(5,8.5){(b)}	
\put(-3,1.5){(c)}
\put(5,1.5){(d)}	
\put(1.9,8){$D$}
\put(9.9,8){$D$}	
\put(-1.8,1.2){$D$}
\put(6.2,1.2){$D$}	
\put(-3.2,11){$S_{1{\rm in}}$}
\put(4.8,11){$S_{1{\rm in}}$}	
\put(-3.1,4){$S_{1{\rm in}}$}
\put(4.8,4){$S_{1{\rm in}}$}	
\put(-1.6,8.1){$S_{2{\rm in}}$}
\put(6.4,8.1){$S_{2{\rm in}}$}	
\put(1.9,0.9){$S_{2{\rm in}}$}
\put(10,0.9){$S_{2{\rm in}}$}	
\end{picture}
\end{center}
\vspace{-0.9cm}
\caption{The surfaces $\Gamma_1$ (in Blue), $\Gamma_2$ and $\Gamma_3$ (in Green),
$\Gamma_4$ and $\Gamma_5$ (in Red) and 
$\Gamma_6$ (in Yellow), corresponding to Fig.~\ref{figHi}(a). The surfaces separate the 3-dimensional operating space 
$\left(D,S_{1{\rm in}},S_{2{\rm in}}\right)$ in 9 regions $\mathcal{I}_k$, $k=0\cdots8$. Front (a), rear (b), left (c) and right (d) view of the surfaces 
$\Gamma_i$. Compare with Fig.~6 of \cite{Sbarciog}.} \label{figOD3D}
\end{figure}
%%%%%%%%%%%%%%%%%%%%%%%

The operating diagram highly depends on the shape of $\Gamma_4$ and $\Gamma_5$ surfaces, that is to say, on the behaviors of functions $H_i$, $i=1,2$, defined in Table \ref{tableFunctions}. 
Notice that these functions are defined on 
$(0,\min(D_1,D_2))$ and
$H_1$ is increasing, since it is the sum of two increasing functions. 
We have
$$
\lim_{D\to0}H_1(D)=0,
\quad
\lim_{D\to0}H_2(D)=+\infty,
\quad
\lim_{D\to0}\frac{dH_2}{dD}(D)=-\infty,
$$
For the limits at right of the domain of definition of these functions,  
we must distinguish two cases:
\begin{itemize}
\item
When $D_1<D_2$, the functions $H_i$, $i=1,2$ are defined on $(0,D_1)$ and 
$$
\lim_{D\to D_1}H_1(D)=
\lim_{D\to D_1}H_2(D)=+\infty,
$$ 
\item
When $D_2<D_1$, the functions $H_i$, $i=1,2$ are defined on $(0,D_2)$ and 
$$
\lim_{D\to D_2}H_1(D)=
\lim_{D\to D_2}H_2(D)=S_2^M+\frac{k_2}{k_1}S_1^*(D_2),
\quad
\lim_{D\to D_2}\frac{dH_1}{dD}(D)=+\infty,
\quad
\lim_{D\to D_2}\frac{dH_2}{dD}(D)=-\infty.
$$ 
Two qualitatively different sub-cases can be distinguished: either $H_2$ is  decreasing on $(0,D_2)$ or it is not monotonous. Since $H_2$ is decreasing near the extremities of its definition interval, a typical example  is where it is decreasing, then increasing and then decreasing.
\end{itemize}

Therefore there are three cases summarized in Table~\ref{Table3cases} and illustrated in Fig.~\ref{figHi}. The role of $H_i$-functions, in the description of the operating diagram, has already been highlighted, see Fig. 4 in \cite{Sbarciog}, where cases $D_2<D_1$ and $D_1<D_2$ are distinguished.

%%%%%%%%%%%%%%%%%%%%%%%%%% 
Since the surfaces $\Gamma_i$, $i=1\cdots 6$, which are the boundaries of the various regions have been derived analytically, the operating diagrams can be drawn qualitatively in each of these cases.  
Instead of giving a general qualitative description of the operating diagram, and without loss of generality, we present the specific examples shown in Fig.~\ref{figHi}. These examples are obtained with the Monod and Haldane functions \ref{MonodHaldane}. Notice that these functions satisfy Hypotheses 1 and 2. Therefore, the results of Propositons \ref{proSSi} and \ref{proIk} apply. The analytical expressions of the auxiliary functions defined in Table \ref{tableFunctions} and needed in the defintions of the regions $\mathcal{I}_k$ of the operating diagrams are given in Table \ref{AuxiliaryFunctionsMonodHaldane}, in the particular case of functions \ref{MonodHaldane}.  The biological parameter values used in the figures are given in Table \ref{parametervalues}. For the sake of practical applicability, these parameter values were chosen in a range that can be found in the literature \cite{Benyahia,Bernard}.

For the biological parameter values corresponding Fig. \ref{figHi}(a), the surfaces $\Gamma_i$, $i=1\cdots6$ are  shown in Fig.~\ref{figOD3D}.
It is difficult to visualize the regions 
$\mathcal{I}_k$, $k=0\cdots8$ of the three-dimensional operating diagram.  
We can have a better understanding of these regions by showing cuts along 2 dimensional planes where one of the operating parameters is kept constant. For instance, if $D$ is kept constant, we obtain then the operating diagram in the 2-dimensional plane 
$\left(S_{1{\rm in}},S_{2{\rm in}}\right)$. These operating diagrams are described in section \ref{ODDconstant}. If $S_{2{\rm in}}$ is kept constant, we obtain then the operating diagram in the 2-dimensional plane $\left(D,S_{1{\rm in}}\right)$. These operating diagrams are described in section \ref{ODS2inconstant}.

%==================================
\begin{figure}[th]
\setlength{\unitlength}{1.0cm}
\begin{center}
\begin{picture}(8.5,14.6)(0,0.3)		
\put(-3,6.5){{\includegraphics[scale=0.34]{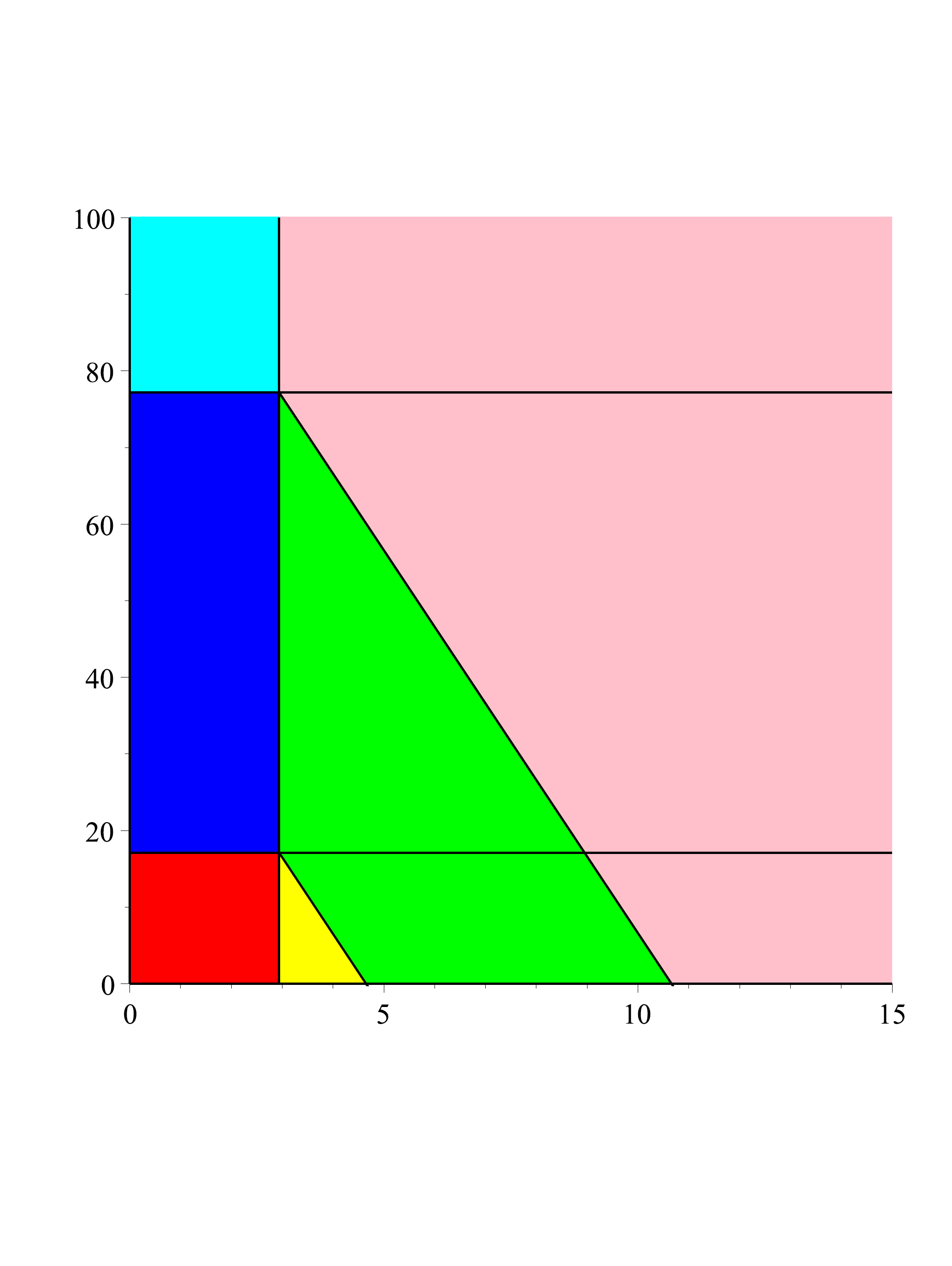}}}
\put(5,6.5){{\includegraphics[scale=0.34]{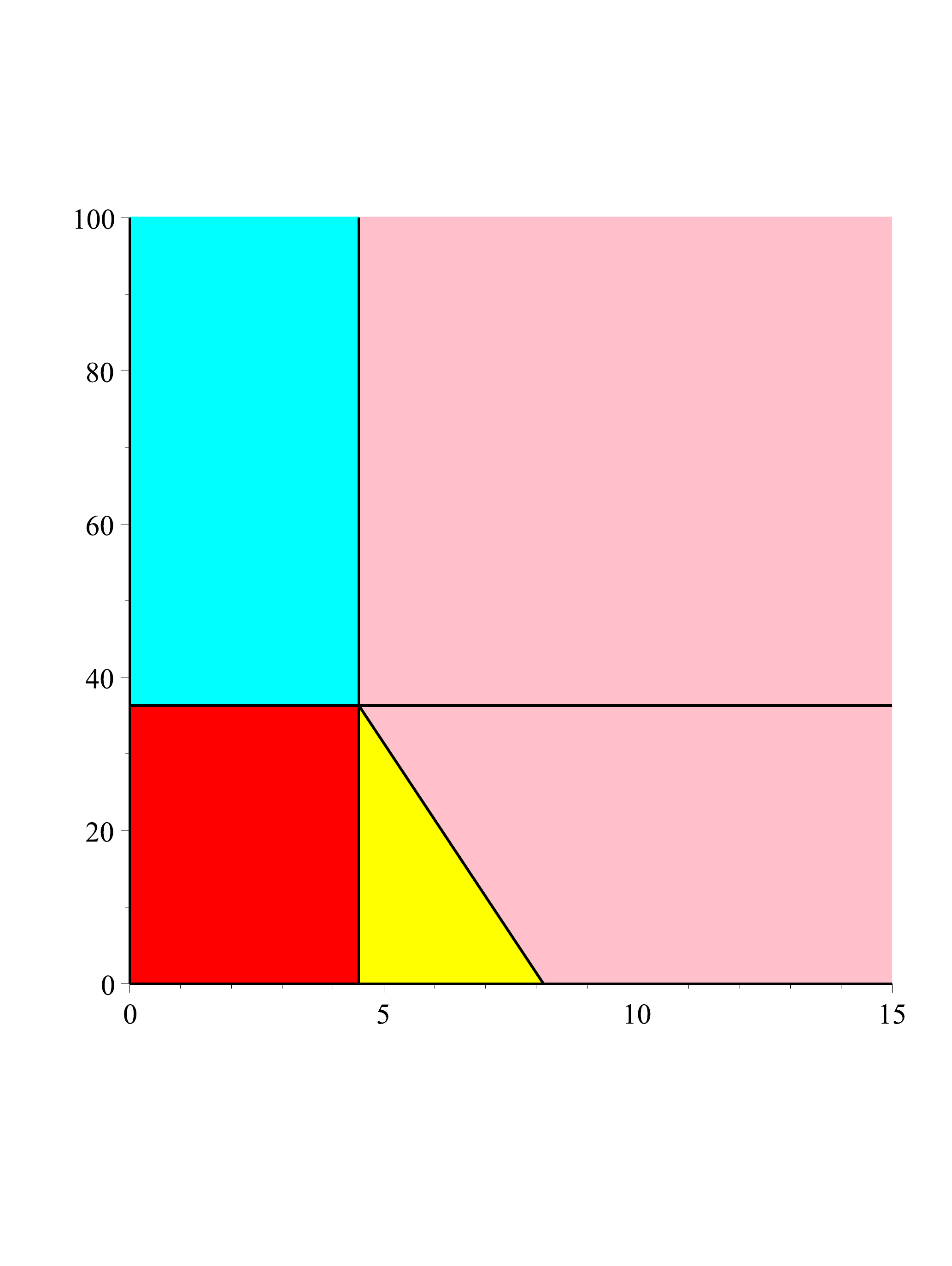}}}
\put(-3,-0.5){{\includegraphics[scale=0.34]{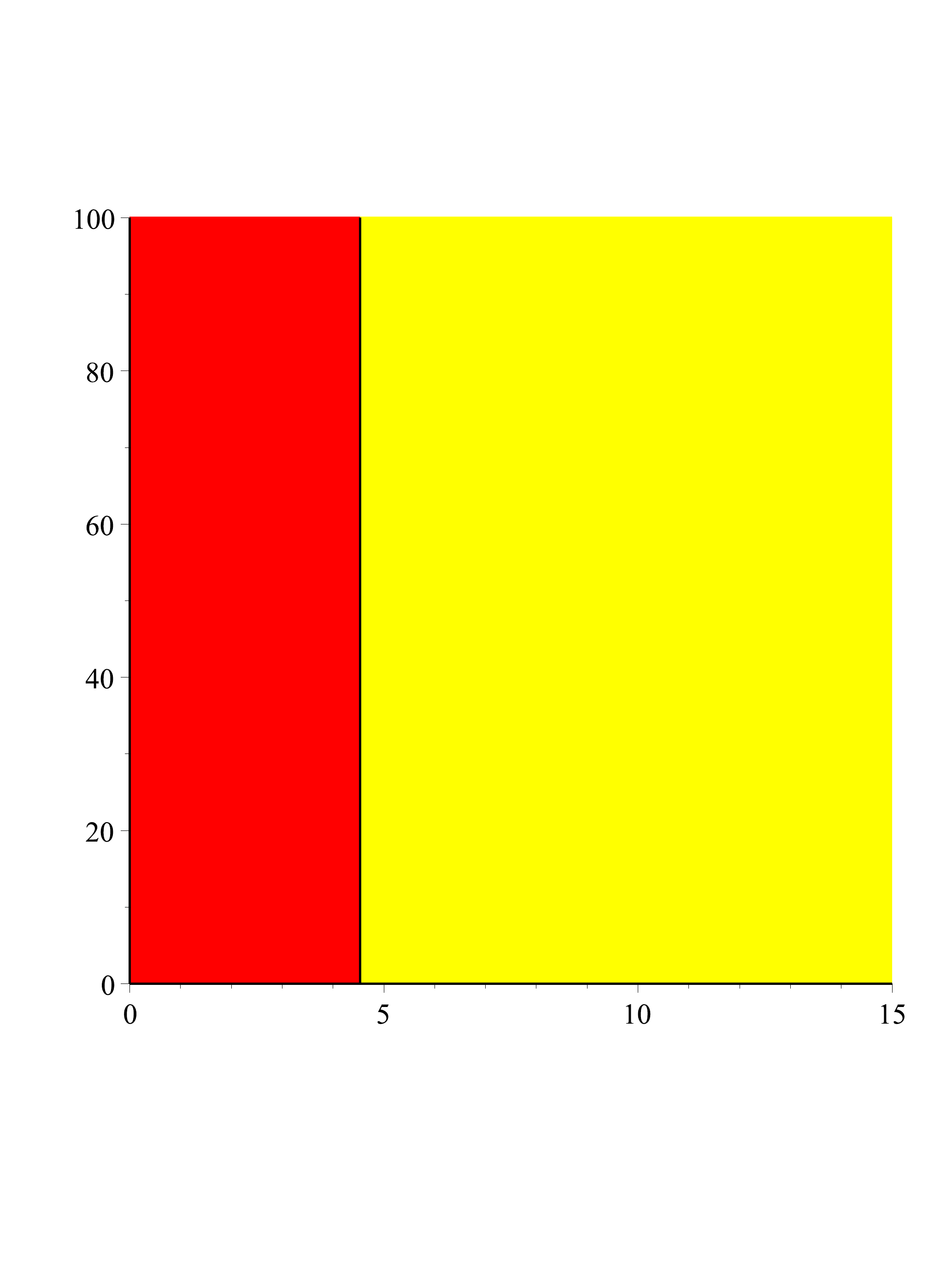}}}
\put(5,-0.5){{\includegraphics[scale=0.34]{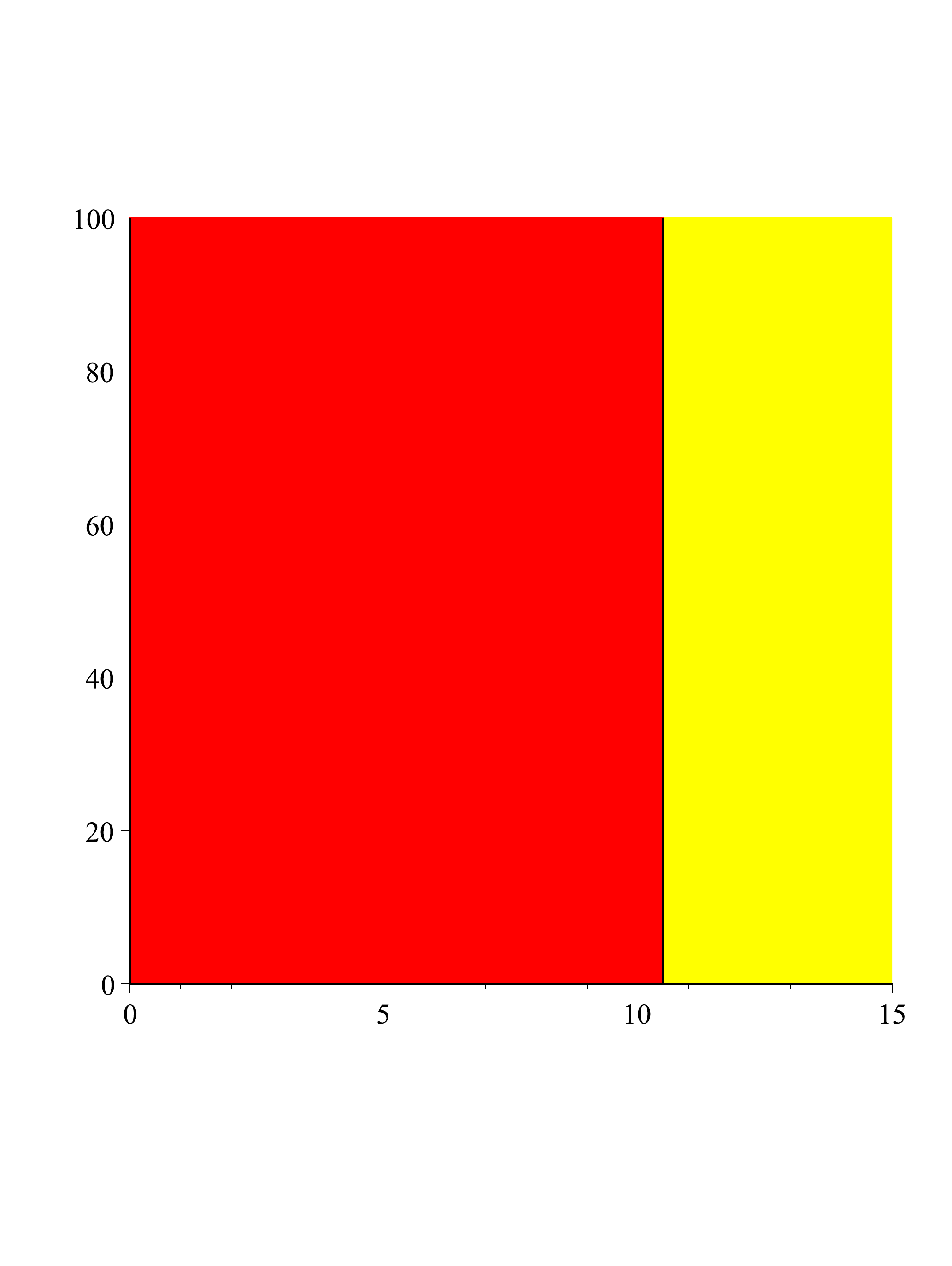}}}
%==============================================
\put(-3,8.5){(a)}
\put(5,8.5){(b)}	
\put(-3,1.5){(c)}
\put(5,1.5){(d)}		
\put(-2.9,11.4){$S_{2{\rm in}}$}
\put(5.2,11.4){$S_{2{\rm in}}$}	
\put(-2.9,4.4){$S_{2{\rm in}}$}
\put(5.2,4.4){$S_{2{\rm in}}$}	
\put(0.8,8.1){$S_{1{\rm in}}$}
\put(8.8,8.1){$S_{1{\rm in}}$}	
\put(0.8,1.1){$S_{1{\rm in}}$}
\put(8.8,1.1){$S_{1{\rm in}}$}	
\put(-1.7,9){$\mathcal{I}_0$}
\put(6.6,9.5){$\mathcal{I}_0$}
\put(-1.4,4.5){$\mathcal{I}_0$}
\put(8,4.5){$\mathcal{I}_0$}
\put(-1.7,11){$\mathcal{I}_1$}
\put(-1.7,13.5){$\mathcal{I}_2$}
\put(6.6,12.5){$\mathcal{I}_2$}	
\put(-0.9,8.8){$\mathcal{I}_3$}
\put(7.8,9.5){$\mathcal{I}_3$}
\put(1.5,4.5){$\mathcal{I}_3$}
\put(10.5,4.5){$\mathcal{I}_3$}
\put(0.4,9){$\mathcal{I}_4$}
\put(2.5,9){$\mathcal{I}_5$}
\put(9.5,9.5){$\mathcal{I}_5$}
\put(-0.5,11){$\mathcal{I}_6$}
\put(1.8,11){$\mathcal{I}_7$}
\put(1,13.5){$\mathcal{I}_8$}
\put(9.5,12.5){$\mathcal{I}_8$}
\put(-1.1,14.5){$\Gamma_1$}
\put(7.5,14.5){$\Gamma_1$}
\put(-0.5,7.5){$\Gamma_1$}
\put(9.8,7.5){$\Gamma_1$}
\put(3.8,9.5){$\Gamma_2$}
\put(3.8,13){$\Gamma_3$}
\put(10,10.9){{$\Gamma_2\!\approx\!\Gamma_3$}}
\put(11.7,10.6){$S_2^M$}
\put(-0.6,9.2){$\Gamma_4$}
\put(8.3,8.8){{$\Gamma_4\!\approx\!\Gamma_5$}}
\put(0.5,10.2){$\Gamma_5$}
\end{picture}
\end{center}
\vspace{-0.9cm}
\caption{The 2-dimensional operating diagram 
$\left(S_{1{\rm in}},S_{2{\rm in}}\right)$ obtained by cuts at  $D$ constant of the 3-dimensional operating diagram shown in Fig.~\ref{figOD3D}. 
(a): $D=0.7$; 
(b): $D=0.818557<D_2$; 
(c): $D=0.82>D_2$; 
(d): $D=1<D_1$. Here $D_1=1.2$,  
$D_2\approx 0.818557467$ and $S_2^M\approx 36.332$.}\label{S1inS2inBenyahia6}
\end{figure}

%==================================
\begin{figure}[th]
\setlength{\unitlength}{1.0cm}
\begin{center}
\begin{picture}(8.5,14.6)(0,0.3)		
\put(-3,6.5){{\includegraphics[scale=0.34]{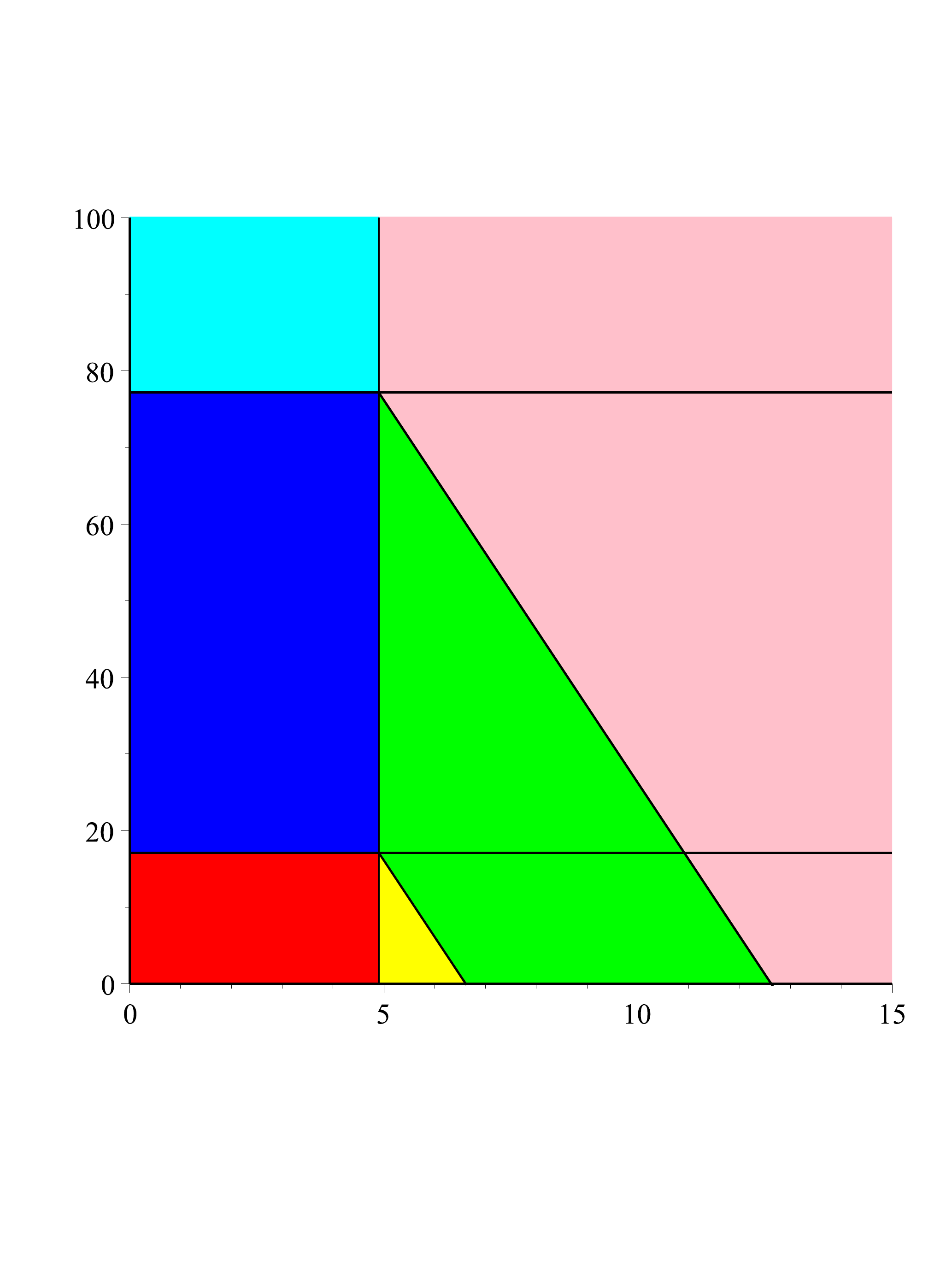}}}
\put(5,6.5){{\includegraphics[scale=0.34]{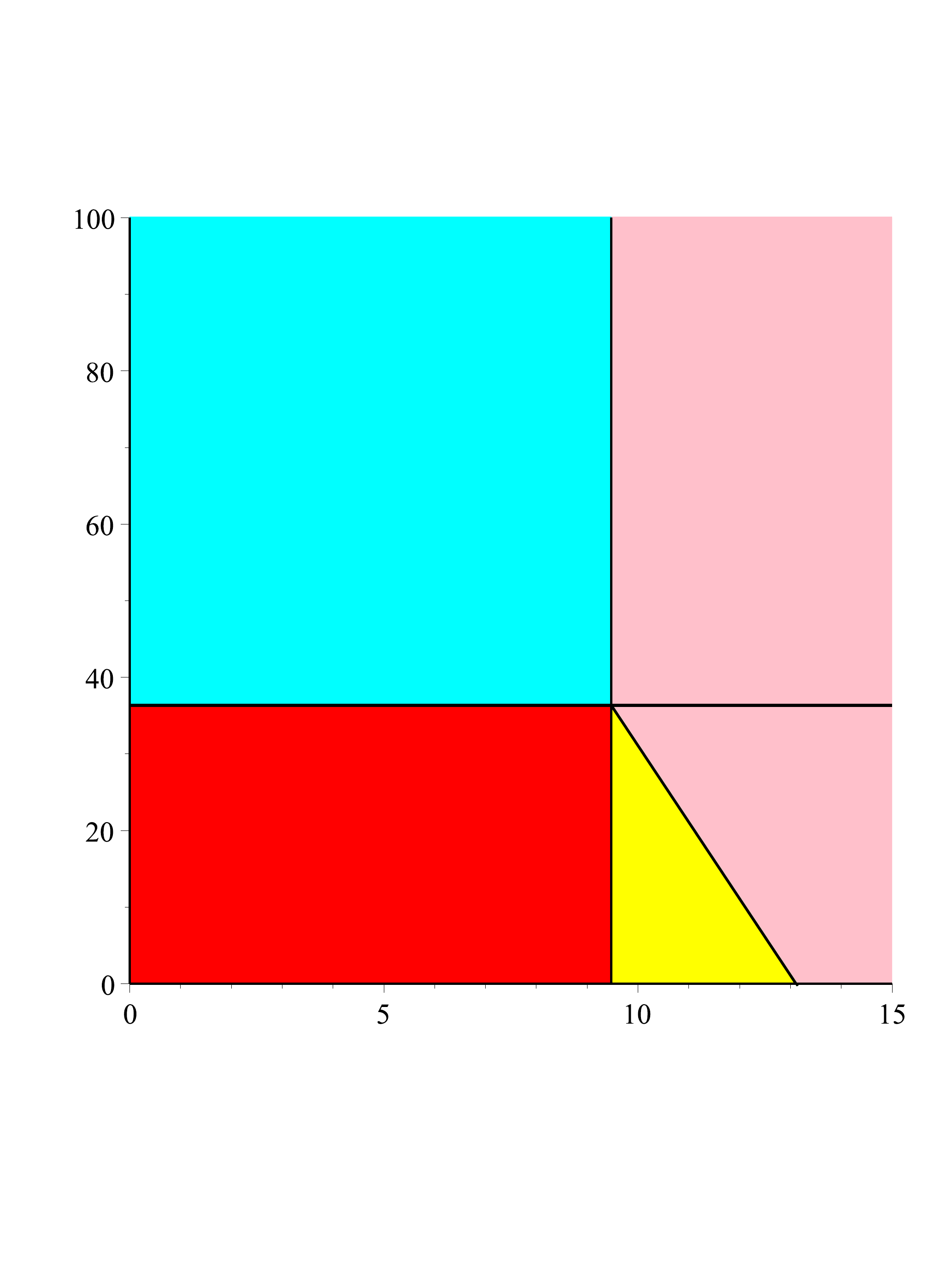}}}
\put(-3,-0.5){{\includegraphics[scale=0.34]{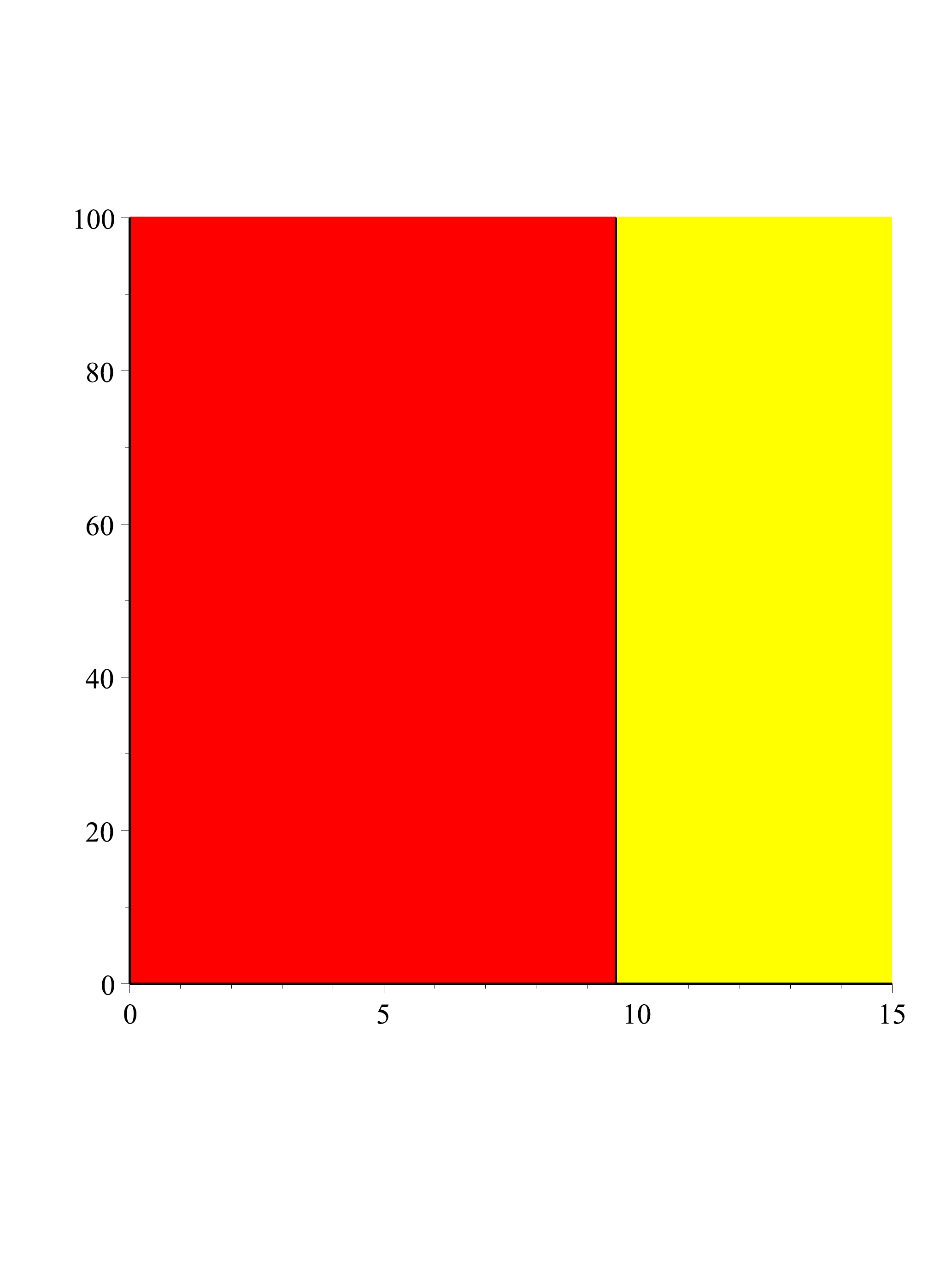}}}
\put(5,-0.5){{\includegraphics[scale=0.34]{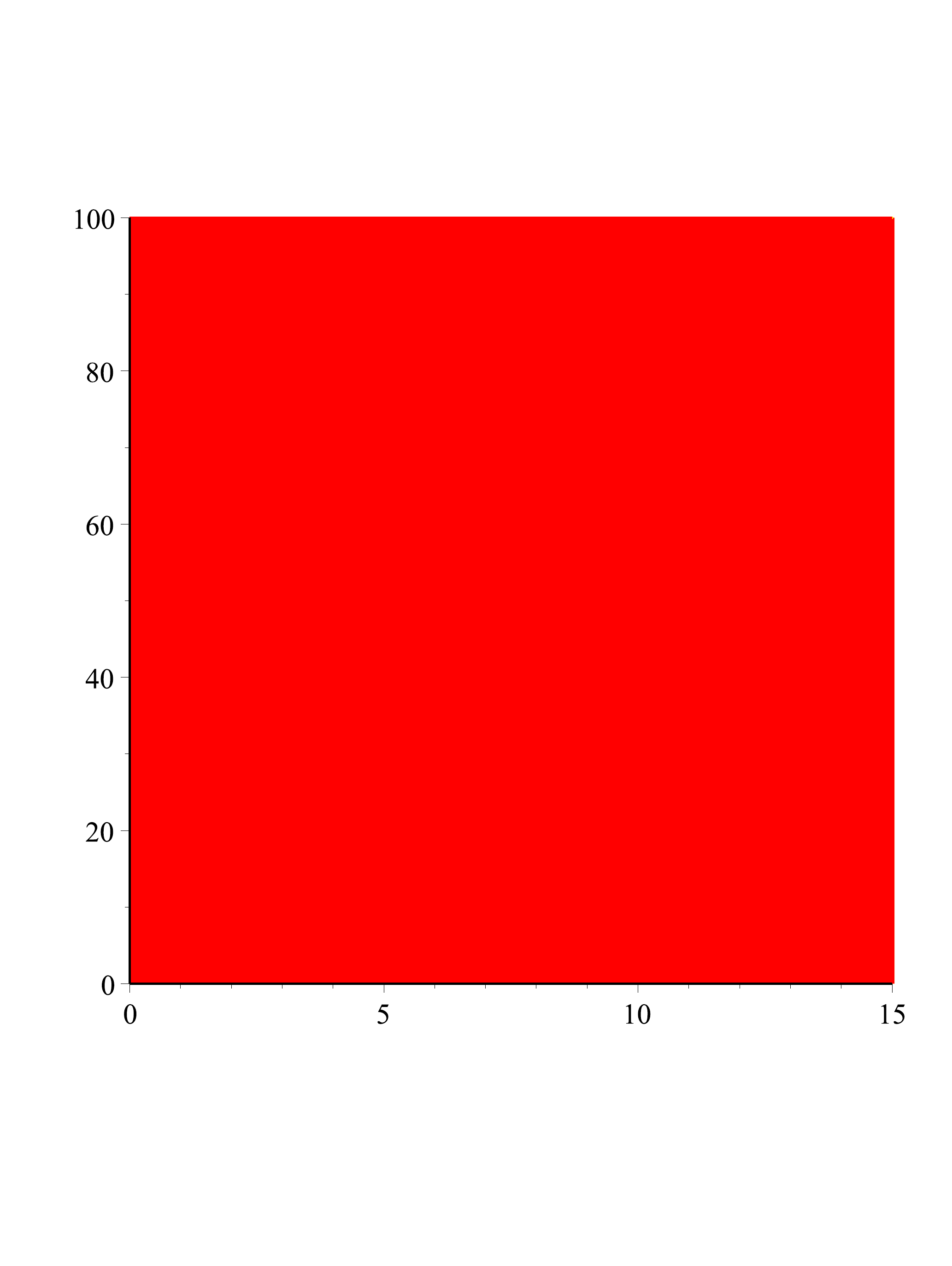}}}
%==============================================
\put(-3,8.5){(a)}
\put(5,8.5){(b)}	
\put(-3,1.5){(c)}
\put(5,1.5){(d)}		
\put(-2.9,11.4){$S_{2{\rm in}}$}
\put(5.2,11.4){$S_{2{\rm in}}$}	
\put(-2.9,4.4){$S_{2{\rm in}}$}
\put(5.2,4.4){$S_{2{\rm in}}$}	
\put(0.8,8.1){$S_{1{\rm in}}$}
\put(8.8,8.1){$S_{1{\rm in}}$}	
\put(0.8,1.1){$S_{1{\rm in}}$}
\put(8.8,1.1){$S_{1{\rm in}}$}	
\put(-1.2,9){$\mathcal{I}_0$}
\put(7.6,9.5){$\mathcal{I}_0$}
\put(-0.4,4.5){$\mathcal{I}_0$}
\put(8.5,4.5){$\mathcal{I}_0$}
\put(-1.2,11){$\mathcal{I}_1$}
\put(-1.2,13.5){$\mathcal{I}_2$}
\put(7.6,12.5){$\mathcal{I}_2$}	
\put(-0.1,8.7){$\mathcal{I}_3$}
\put(9.7,9.5){$\mathcal{I}_3$}
\put(2.5,4.5){$\mathcal{I}_3$}
\put(1.1,9){$\mathcal{I}_4$}
\put(2.9,9){$\mathcal{I}_5$}
\put(10.9,9.5){$\mathcal{I}_5$}
\put(0.3,11){$\mathcal{I}_6$}
\put(2.3,11){$\mathcal{I}_7$}
\put(1.5,13.5){$\mathcal{I}_8$}
\put(10.5,12.5){$\mathcal{I}_8$}
\put(-0.3,14.5){$\Gamma_1$}
\put(9.4,14.5){$\Gamma_1$}
\put(1.2,7.5){$\Gamma_1$}
\put(3.8,9.5){$\Gamma_2$}
\put(3.8,13){$\Gamma_3$}
\put(10.2,10.8){{$\Gamma_2\!\approx\!\Gamma_3$}}
\put(11.7,10.6){$S_2^M$}
\put(0.1,9.2){$\Gamma_4$}
\put(10.2,8.8){{$\Gamma_4\!\approx\!\Gamma_5$}}
\put(1.2,10.2){$\Gamma_5$}
\end{picture}
\end{center}
\vspace{-0.9cm}
\caption{The 2-dimensional operating diagram 
$\left(S_{1{\rm in}},S_{2{\rm in}}\right)$ with $D$ constant, corresponding to Fig.~\ref{figHi}(b). 
(a): $D=0.7$; 
(b): $D=0.818557<D_2$; 
(c): $D=0.82>D_2$; 
(d): $D=1\geq D_1$.
Here $D_1=1$, $D_2\approx 0.818557467$ and $S_2^M\approx 36.332$.
}\label{S1inS2inBenyahia5}
\end{figure}

\section{Operating diagram in $\left(S_{1{\rm in}},S_{2{\rm in}}\right)$ where $D$ is kept constant}
\label{ODDconstant}
The intersections of the surfaces 
$\Gamma_i$, $i=1\cdots5$ with a plane where $D$ is kept constant are straight lines: vertical line for $\Gamma_1$, 
horizontal lines for $\Gamma_2$ and 
$\Gamma_3$ and oblique lines for 
$\Gamma_4$ and $\Gamma_5$, see Table \ref{IntersectionD}. 
%%%%%%%%%%%%%%%%%%%%%%%%%%%%%
%%%%%%%%%%%%%%%%%%%%%%%%%%%%%% 
These straight lines separate the operating parameter plane
$\left(S_{1{\rm in}},S_{2{\rm in}}\right)$ in up to nine regions
$\mathcal{I}_k$, $k=0\cdots8$.
Since the curves are straight lines, the regions of the operating diagram are very easy to picture. We begin by considering the case where $D_2<D_1$ corresponding to Figs. \ref{figHi}(a) and \ref{figHi}(b).

\subsection{Operating diagram when $D_2<D_1$}
The cuts at $D$ constant of the 3-dimensional operating diagram shown in Fig.~\ref{figOD3D} and corresponding to Fig.~\ref{figHi}(a), are shown in Fig. \ref{S1inS2inBenyahia6}. The regions are colored according to the colors in 
Table~~\ref{Table9cases}.  
For the clarity of the picture all straight lines $\Gamma_i$ are plotted in black.
Fig.~\ref{S1inS2inBenyahia6} shows the following features.

%%%%%%%%%%%%%%%
%\begin{enumerate}
%%%%%%%%%%%%%%%%
%\item 
For $0<D<D_2$ all regions exist, see 
Fig.~\ref{S1inS2inBenyahia6}(a). 
For increasing $D$, 
the vertical line 
$\Gamma_1$ defined by 
$S_{1{\rm in}}=S_1^*(D)$ moves to the right and tends towards the vertical line defined by
$S_{1{\rm in}}=S_1^*(D_2)$. At the same time, the horizontal lines 
$\Gamma_2$ and $\Gamma_3$, defined by 
$S_{2{\rm in}}=S_2^{1*}(D)$ and 
$S_{2{\rm in}}=S_2^{2*}(D)$, respectively,
move towards each other and tend toward the horizontal line defined by 
$S_{2{\rm in}}=S_2^M$, so that the regions $\mathcal{I}_1$, $\mathcal{I}_4$ and 
$\mathcal{I}_5$ shrink an disappear, 
see Fig.~\ref{S1inS2inBenyahia6}(b). 

%%%%%%%%%%%%%%%%
%\item 
For $D=D_2$ the operating diagram changes dramatically, since regions
$\mathcal{I}_1$, 
$\mathcal{I}_2$,
$\mathcal{I}_4$,
$\mathcal{I}_5$, 
$\mathcal{I}_6$, 
$\mathcal{I}_7$ and $\mathcal{I}_8$ disappear and regions 
$\mathcal{I}_0$, $\mathcal{I}_3$ invade the whole operating plan. See
Figs.~\ref{S1inS2inBenyahia6}(b) and \ref{S1inS2inBenyahia6}(c) obtained for 
$D=0.818557<D_2$ and  
$D=0.82>D_2$ respectively, where  
$D_2\approx 0.818557467$.

%%%%%%%%%%%%%%%%%%%%%%
%\item 
For $D_2<D<D_1$ only regions $\mathcal{I}_0$ and $\mathcal{I}_3$ appear, see 
Figs.~\ref{S1inS2inBenyahia6}(c)
and \ref{S1inS2inBenyahia6}(d). 
For increasing $D$, the vertical line 
$\Gamma_1$ defined by 
$S_{1{\rm in}}=S_1^*(D)$ moves to the right and tends towards infinity.
%%%%%%%%%%%%%%%%%%%%%
%\item 
For $D\geq D_1$ only region $\mathcal{I}_0$ appears.
%\end{enumerate}

The cuts $D$ constant of the 3-dimensional operating diagram corresponding to Fig.~\ref{figHi}(b), are shown in 
Fig.~\ref{S1inS2inBenyahia5}. This figure has the same qualitative characteristics as  
Fig.~\ref{S1inS2inBenyahia6}: presence of all regions when $0<D<D_2$ as shown in 
Fig.~\ref{S1inS2inBenyahia5}(a); disappearance of all regions except 
regions $\mathcal{I}_0$ and 
$\mathcal{I}_3$, when $D=D_2$, as shown in 
the transition from 
Fig. \ref{S1inS2inBenyahia5}(b) to 
Fig.~\ref{S1inS2inBenyahia5}(c); disappearance of region $\mathcal{I}_3$, when $D\geq D_1$, as shown in \ref{S1inS2inBenyahia5}(d).

\subsection{Operating diagram when $D_1<D_2$}

%==================================
\begin{figure}[th]
\setlength{\unitlength}{1.0cm}
\begin{center}
\begin{picture}(8.5,14.6)(0,0.3)		
\put(-3,6.5){{\includegraphics[scale=0.34]{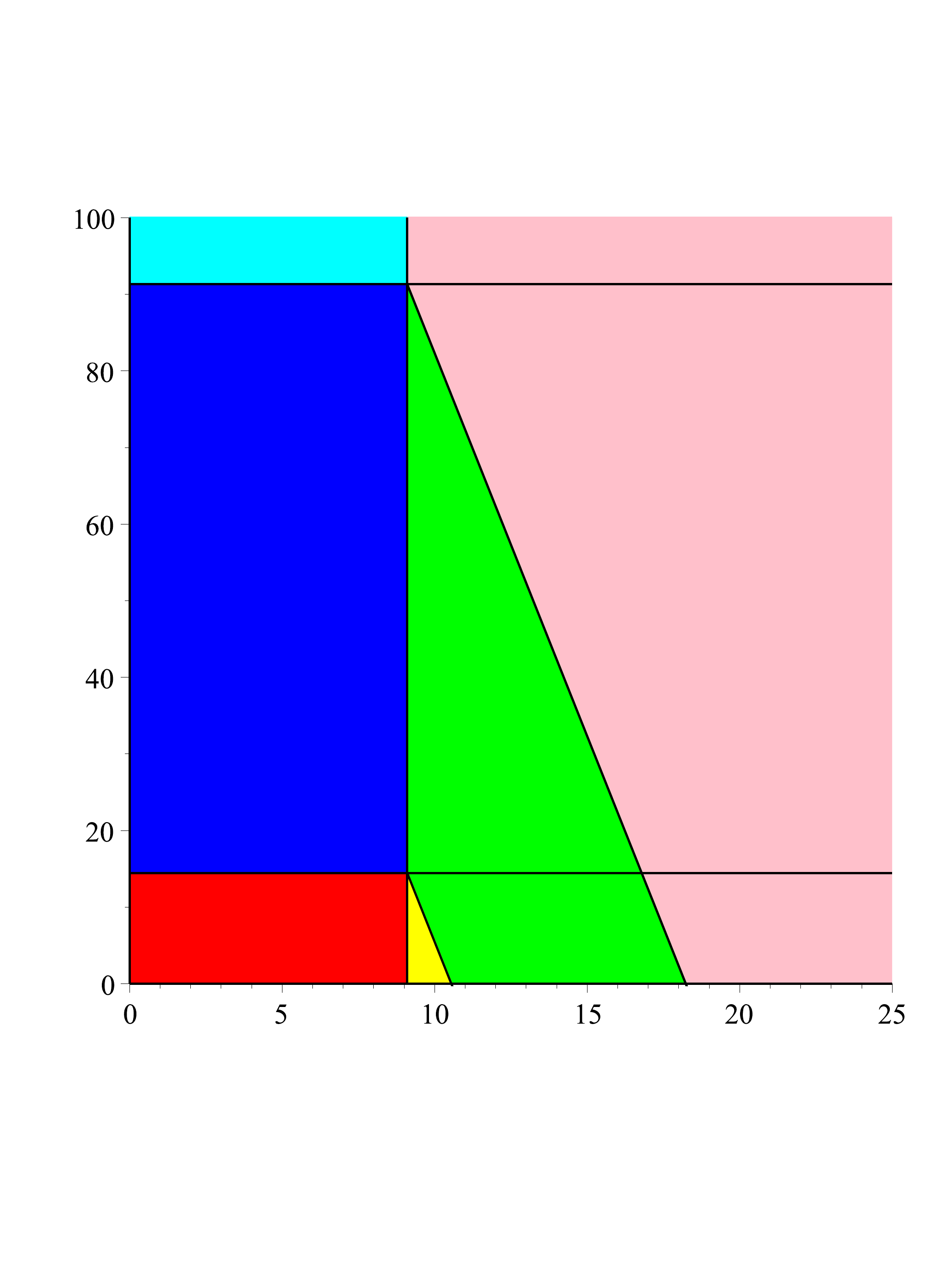}}}
\put(5,6.5){{\includegraphics[scale=0.34]{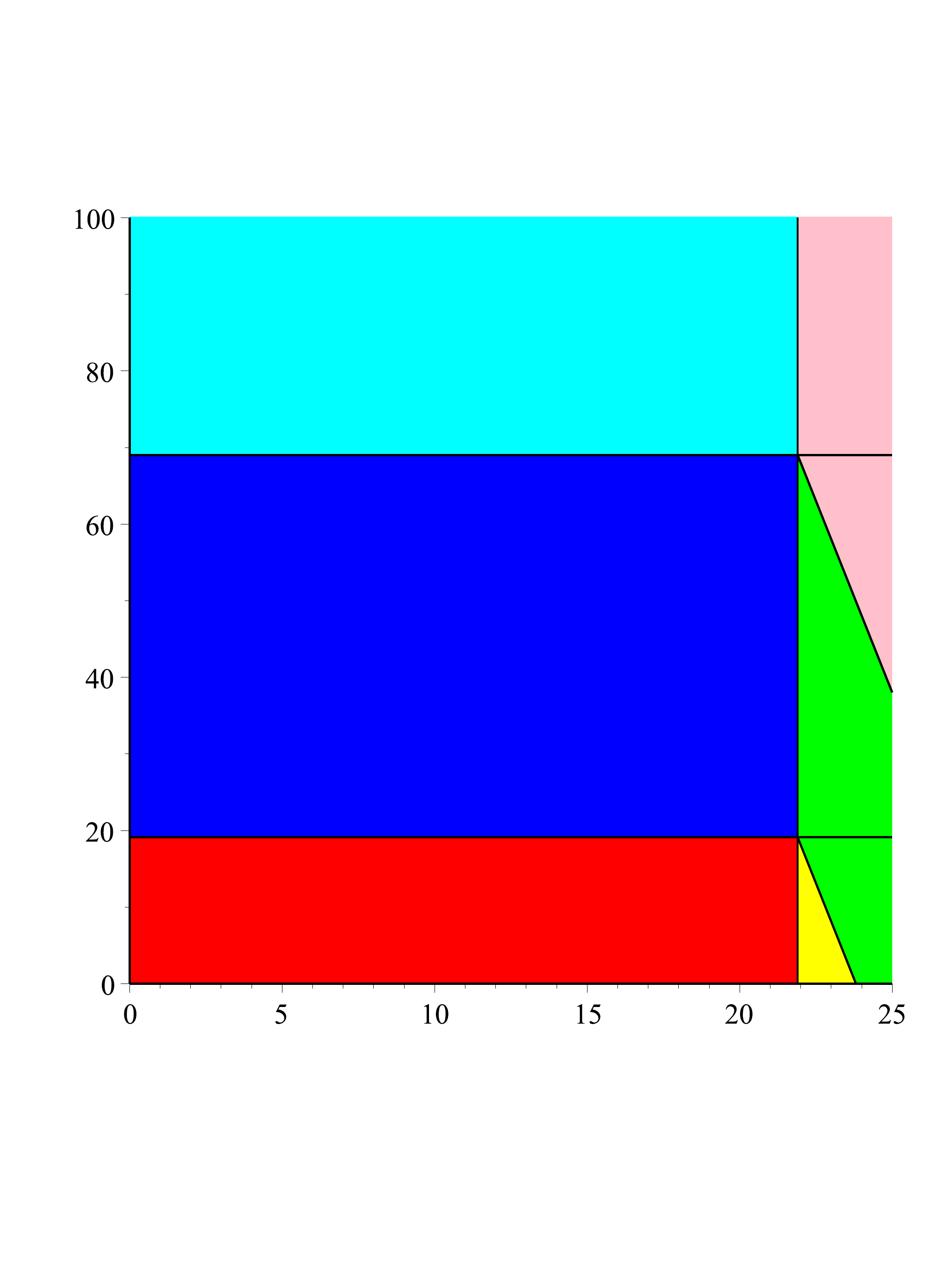}}}
\put(-3,-0.5){{\includegraphics[scale=0.34]{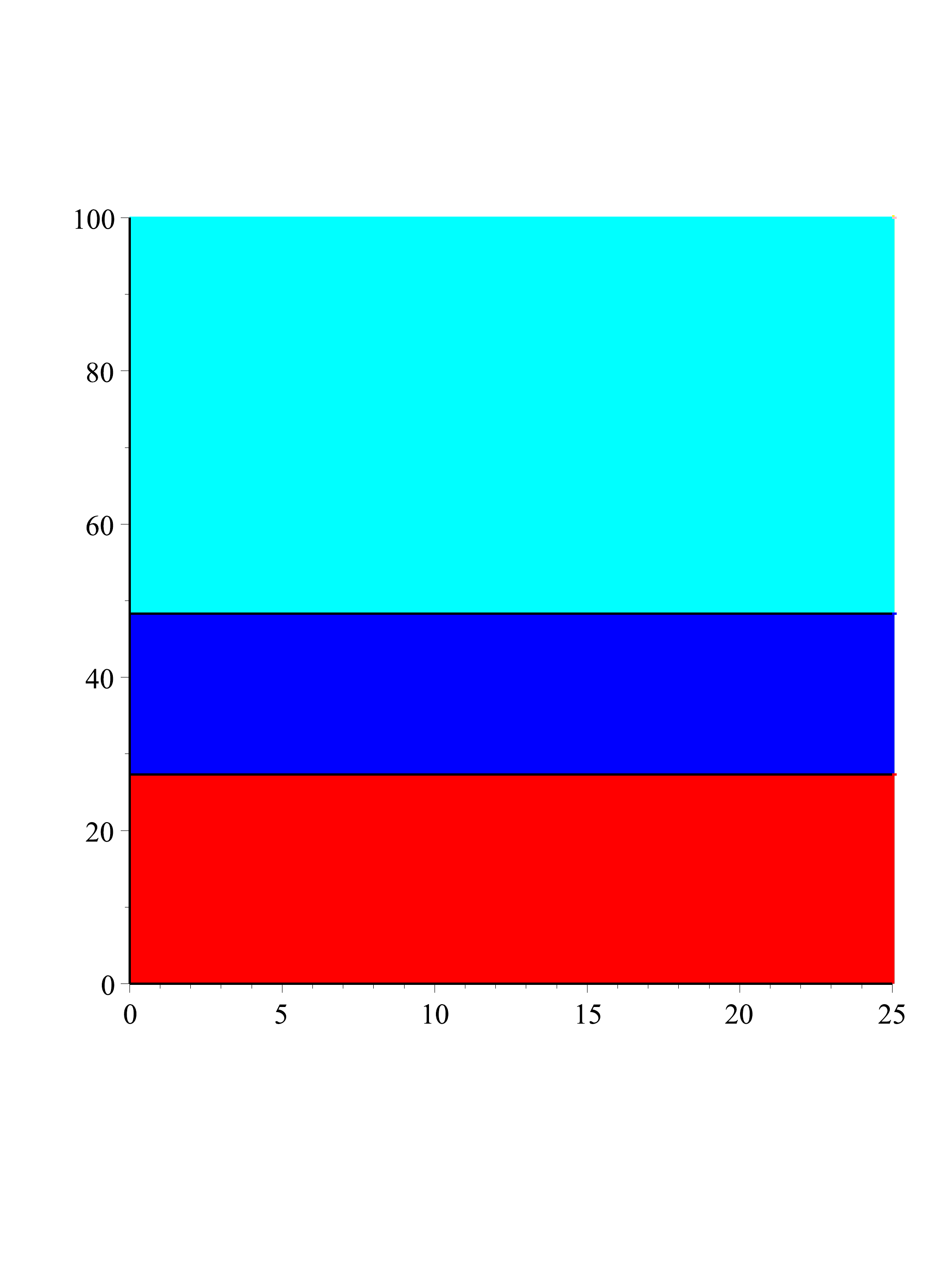}}}
\put(5,-0.5){{\includegraphics[scale=0.34]{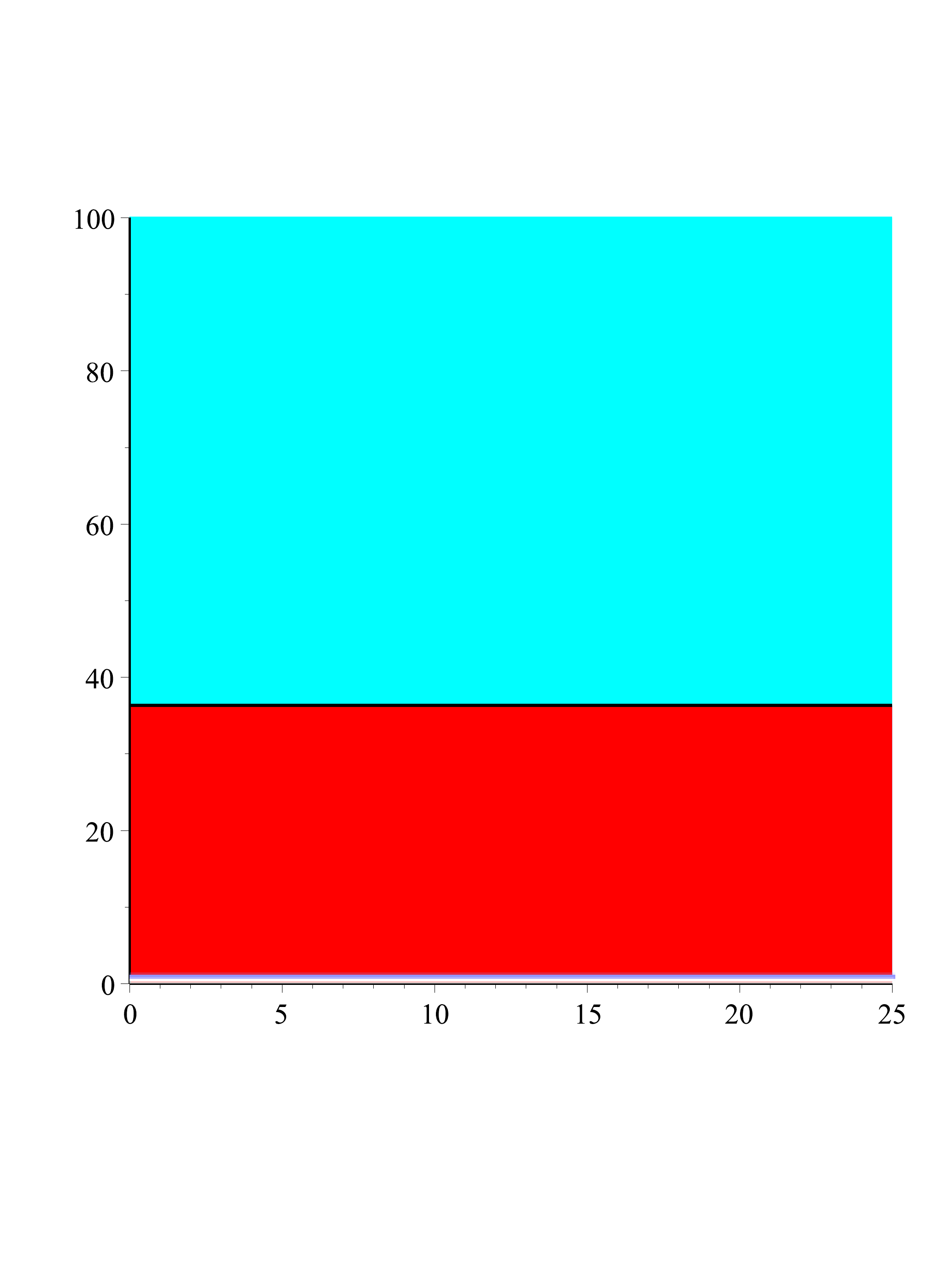}}}
%==============================================
\put(-3,8.5){(a)}
\put(5,8.5){(b)}	
\put(-3,1.5){(c)}
\put(5,1.5){(d)}		
\put(-2.9,11.4){$S_{2{\rm in}}$}
\put(5.2,11.4){$S_{2{\rm in}}$}	
\put(-2.9,4.4){$S_{2{\rm in}}$}
\put(5.2,4.4){$S_{2{\rm in}}$}	
\put(0.8,8.1){$S_{1{\rm in}}$}
\put(8.8,8.1){$S_{1{\rm in}}$}	
\put(0.8,1.1){$S_{1{\rm in}}$}
\put(8.8,1.1){$S_{1{\rm in}}$}	
\put(-1.1,8.9){$\mathcal{I}_0$}
\put(8.2,9){$\mathcal{I}_0$}
\put(0.6,2.2){$\mathcal{I}_0$}
\put(8.8,2.6){$\mathcal{I}_0$}
\put(-1.1,11.5){$\mathcal{I}_1$}
\put(8.2,11){$\mathcal{I}_1$}
\put(0.6,3.6){$\mathcal{I}_1$}
\put(-1.1,14){$\mathcal{I}_2$}
\put(8.2,13.3){$\mathcal{I}_2$}	
\put(0.6,5.5){$\mathcal{I}_2$}
\put(8.8,5.5){$\mathcal{I}_2$}
\put(-0.3,8.1){$\mathcal{I}_3$}
\put(0,8.4){\vector(1,2){0.2}}
\put(10.7,8.1){$\mathcal{I}_3$}
\put(11,8.4){\vector(1,2){0.2}}
\put(0.9,8.9){$\mathcal{I}_4$}
\put(11.3,8.8){$\mathcal{I}_4$}
\put(2.6,8.9){$\mathcal{I}_5$}
\put(0.4,11){$\mathcal{I}_6$}
\put(11.1,10.3){$\mathcal{I}_6$}
\put(2.1,11.5){$\mathcal{I}_7$}
\put(11.2,12.1){$\mathcal{I}_7$}
\put(1.7,14){$\mathcal{I}_8$}
\put(11.1,13.5){$\mathcal{I}_8$}
\put(0,14.5){$\Gamma_1$}
\put(10.9,14.5){$\Gamma_1$}
\put(3.8,9.3){$\Gamma_2$}
\put(11.7,9.6){$\Gamma_2$}
\put(3.8,3){$\Gamma_2$}
\put(3.8,13.7){$\Gamma_3$}
\put(11.7,12.4){$\Gamma_3$}
\put(3.8,4.2){$\Gamma_3$}
\put(10.5,3.8){{$\Gamma_2\!\approx\!\Gamma_3$}}
\put(11.7,3.6){$S_2^M$}
\put(0.1,9){$\Gamma_4$}
\put(11,9.2){$\Gamma_4$}
\put(1.2,9.9){$\Gamma_5$}
\put(11.3,11.4){$\Gamma_5$}
\end{picture}
\end{center}
\vspace{-0.9cm}
\caption{The 2-dimensional operating diagram 
$\left(S_{1{\rm in}},S_{2{\rm in}}\right)$ with $D$ constant, corresponding to Fig.~\ref{figHi}(c). 
(a): $D=0.65$;
(b): $D=0.73$; 
(c): $D=D1=0.8$;
(d): $D=0.818557<D_2$. Here 
$D_2\approx 0.818557467$ and 
$S_M^2\approx 36.332$.}\label{S1inS2inBenyahia4}
\end{figure}

The cuts $D$ constant of the 3-dimensional operating corresponding to 
Fig.~\ref{figHi}(c), are shown in 
Fig.~\ref{S1inS2inBenyahia4}. The regions are colored according to the colors in 
Table \ref{Table9cases}.  
Fig.~\ref{S1inS2inBenyahia4} shows the following features.
%%%%%%%%%%%%%%%

For $0<D<D_1$ all regions appear, 
see Fig.~\ref{S1inS2inBenyahia4}(a). 
For increasing $D$, 
the vertical line 
$\Gamma_1$ defined by 
$S_{1{\rm in}}=S_1^*(D)$ moves to the right and tends towards infinity. 
At the same time, the horizontal lines 
$\Gamma_2$ and $\Gamma_3$, defined by 
$S_{2{\rm in}}=S_2^{1*}(D)$ and 
$S_{2{\rm in}}=S_2^{2*}(D)$, respectively,
move towards each other, as depicted in 
Fig.~\ref{S1inS2inBenyahia4}(b), and tend towards the horizontal lines defined by 
$S_{2{\rm in}}=S_2^{1*}(D_1)$ and 
$S_{2{\rm in}}=S_2^{2*}(D_1)$, respectively, as depicted in Fig.~\ref{S1inS2inBenyahia4}(c).
 
For $D=D_1$, the operating diagram changes dramatically:  all regions 
$\mathcal{I}_3$, $\mathcal{I}_4$ and 
$\mathcal{I}_5$, $\mathcal{I}_6$, 
$\mathcal{I}_7$ and 
$\mathcal{I}_8$ have 
disappeared since they are located to the right of the vertical $\Gamma_1$ which tends toward infinity, when $D$ tends to $D_1$, 
as depicted in 
Fig.~\ref{S1inS2inBenyahia4}(c).
%%%%%%%%%%%%%%%%%%%%%%%

For $D_1\leq D<D_2$ only regions
$\mathcal{I}_0$, 
$\mathcal{I}_1$,
and $\mathcal{I}_2$ appear.
For increasing $D$, 
the horizontal lines 
$\Gamma_2$ and $\Gamma_3$, defined by 
$S_{2{\rm in}}=S_2^{1*}(D)$ and 
$S_{2{\rm in}}=S_2^{2*}(D)$, respectively,
move towards each other and tend toward the horizontal line defined by 
$S_{2{\rm in}}=S_2^M$, so that the regions $\mathcal{I}_1$ shrinks an disappear, see 
Fig.~\ref{S1inS2inBenyahia4}(d). 
%%%%%%%%%%%%%%%%

For $D=D_2$ the operating diagram changes dramatically, since regions
$\mathcal{I}_1$ and 
$\mathcal{I}_2$ disappear and region 
$\mathcal{I}_0$ invades the whole operating plan.
%%%%%%%%%%%%%%%%%%%%%
For $D\geq D_2$ only region $\mathcal{I}_0$ appears.

%%%%%%%%%%%%%%%%%%%%%%%%%%
%%%%%%%%%%%%%%%%%%%%%
\begin{figure}[th]
\setlength{\unitlength}{1.0cm}
\begin{center}
\begin{picture}(8.5,14.7)(0,0.2)		
\put(-3,6.5){{\includegraphics[scale=0.34]{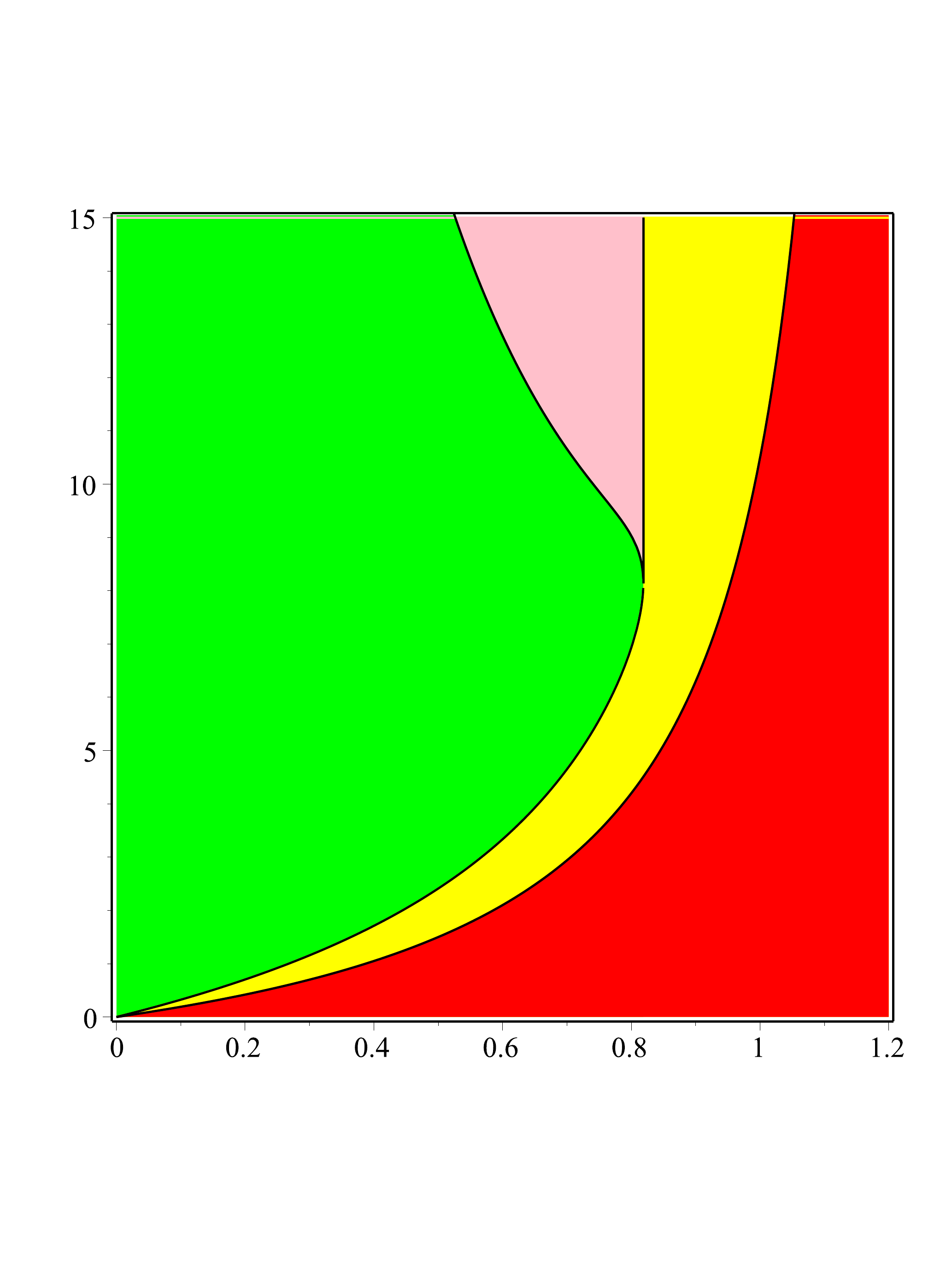}}}
\put(5,6.5){{\includegraphics[scale=0.34]{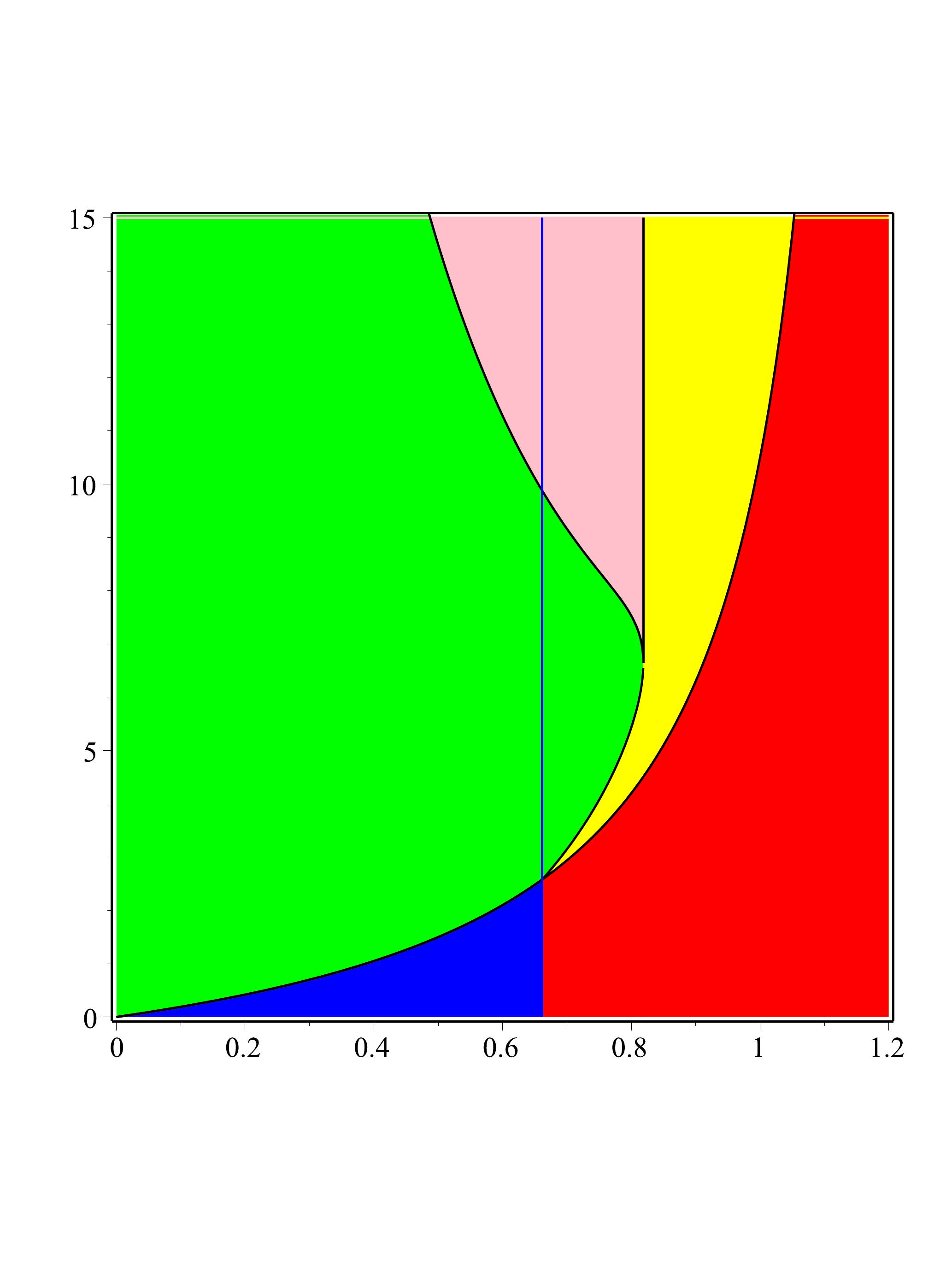}}}
\put(-3,-0.5){{\includegraphics[scale=0.34]{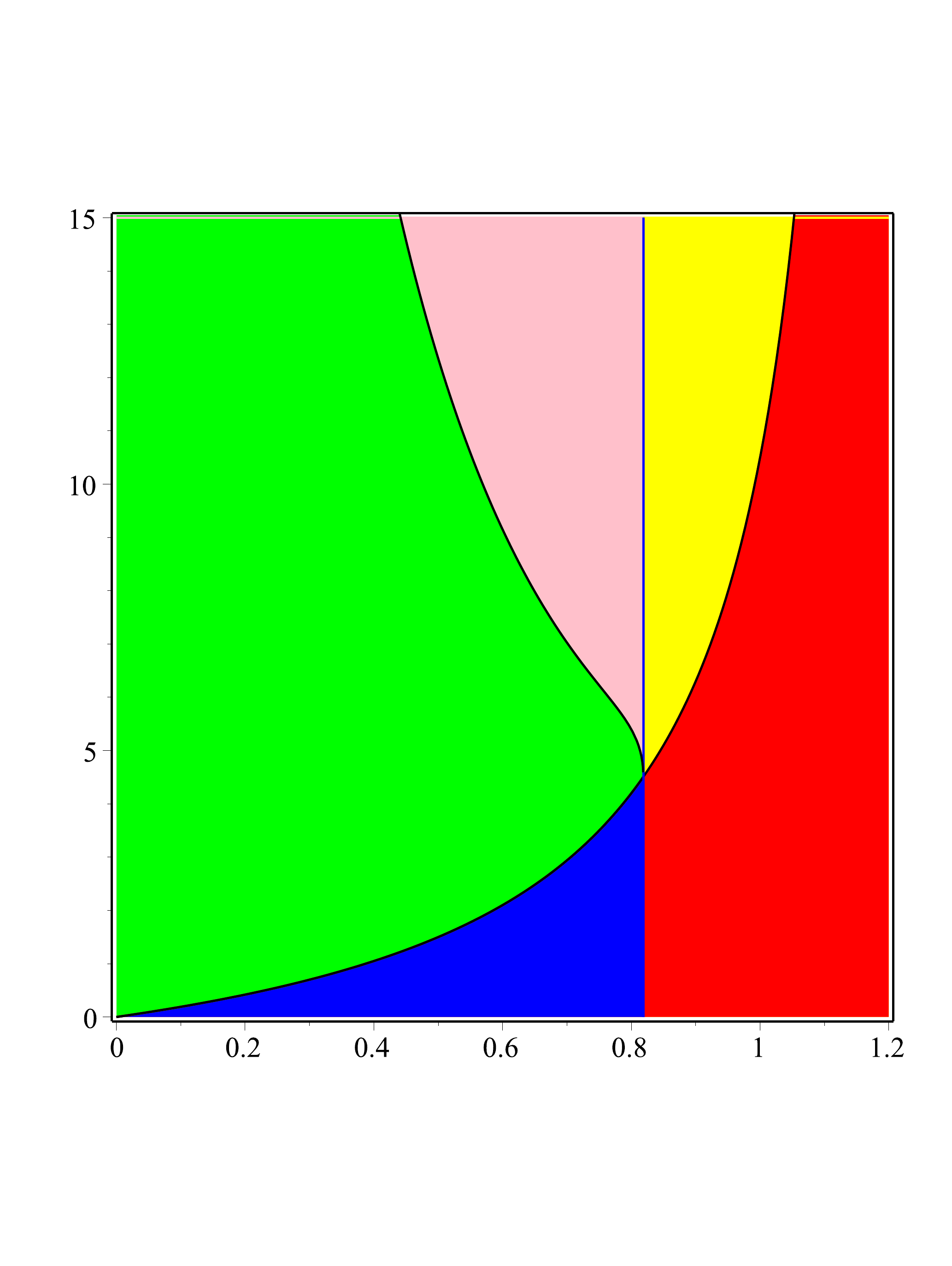}}}
\put(5,-0.5){{\includegraphics[scale=0.34]{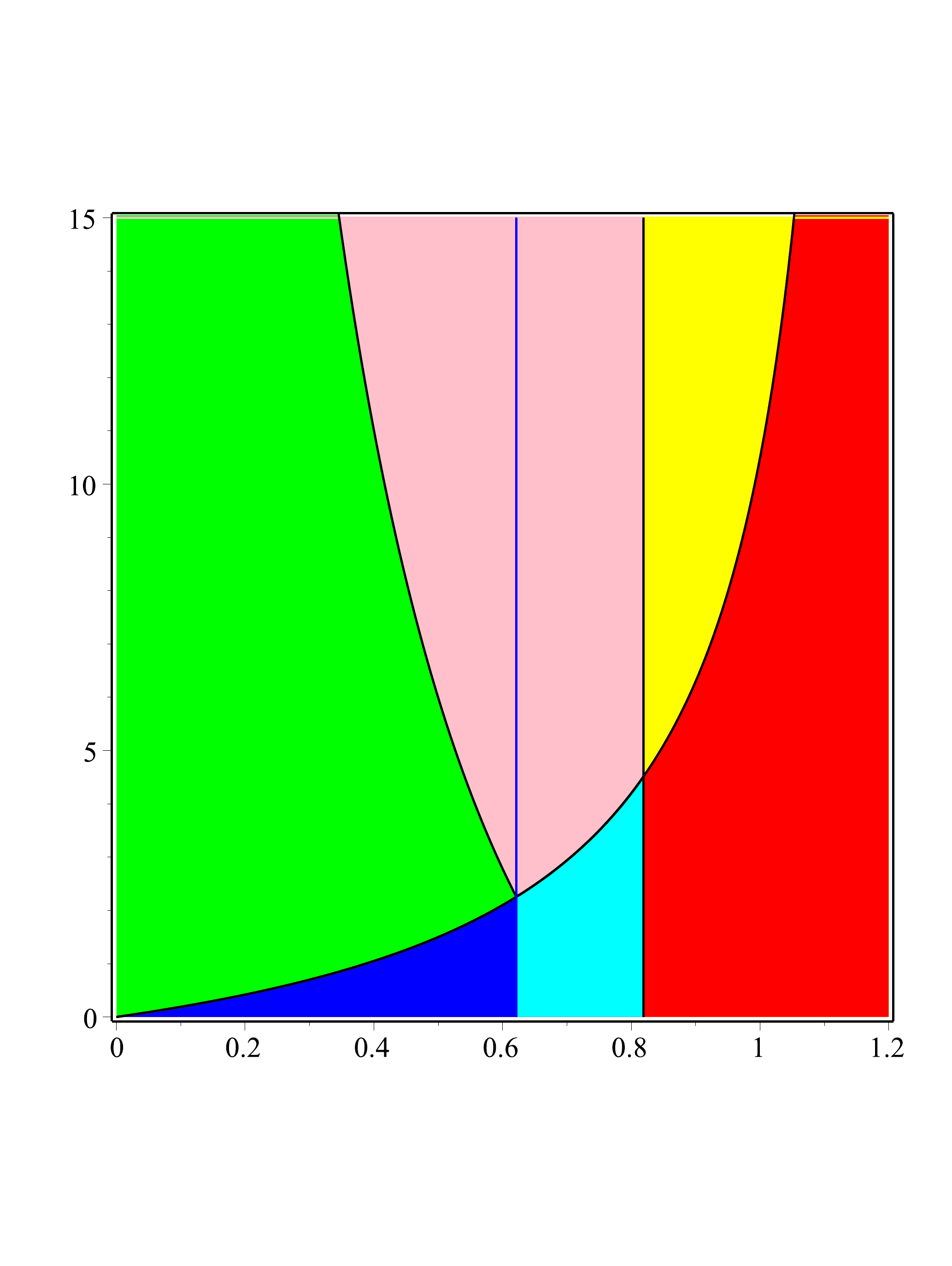}}}
%==============================================
\put(-3,8.5){(a)}
\put(5,8.5){(b)}	
\put(-3,1.5){(c)}
\put(5,1.5){(d)}		
\put(-2.9,11.4){$S_{1{\rm in}}$}
\put(5.1,11.4){$S_{1{\rm in}}$}	
\put(-2.9,4.4){$S_{1{\rm in}}$}
\put(5.1,4.4){$S_{1{\rm in}}$}	
\put(0.9,7.9){$D$}
\put(8.9,7.9){$D$}	
\put(0.9,0.9){$D$}
\put(8.9,0.9){$D$}
\put(2.7,10){$\mathcal{I}_0$}
\put(10.7,10){$\mathcal{I}_0$}
\put(2.7,3){$\mathcal{I}_0$}
\put(10.7,3){$\mathcal{I}_0$}
\put(8,8.5){$\mathcal{I}_1$}
\put(0.7,1.6){$\mathcal{I}_1$}
\put(8.2,1.6){$\mathcal{I}_1$}
\put(9.2,1.6){$\mathcal{I}_2$}	
\put(2.1,13.5){$\mathcal{I}_3$}
\put(10.1,13.5){$\mathcal{I}_3$}
\put(2.1,6){$\mathcal{I}_3$}
\put(10.1,6){$\mathcal{I}_3$}
\put(0,10.7){$\mathcal{I}_4$}
\put(9.2,10.7){$\mathcal{I}_4$}
\put(1.2,13.5){$\mathcal{I}_5$}
\put(9.2,13.5){$\mathcal{I}_5$}
\put(7,10.7){$\mathcal{I}_6$}
\put(-0.5,3){$\mathcal{I}_6$}
\put(7,3){$\mathcal{I}_6$}
\put(8.6,13.5){$\mathcal{I}_7$}
\put(1,6){$\mathcal{I}_7$}
\put(8.2,6){$\mathcal{I}_7$}
\put(9.1,6){$\mathcal{I}_8$}
\put(2.7,14.5){$\Gamma_1$}
\put(10.7,14.5){$\Gamma_1$}
\put(2.7,7.5){$\Gamma_1$}
\put(10.7,7.5){$\Gamma_1$}
\put(8.9,14.5){$\Gamma_2$}
\put(8.7,7.5){$\Gamma_3$}
\put(0.55,9.6){$\Gamma_4$}
\put(9.3,10){$\Gamma_4$}
\put(0.1,14.5){$\Gamma_5$}
\put(8,14.5){$\Gamma_5$}
\put(-0.3,7.5){$\Gamma_5$}
\put(7.4,7.5){$\Gamma_5$}
\put(1.5,14.5){$\Gamma_6$}
\put(9.6,14.5){$\Gamma_6$}
\put(1.5,7.5){$\Gamma_6$}
\put(9.6,7.5){$\Gamma_6$}
\end{picture}
\end{center}
\vspace{-0.9cm}
\caption{The 2-dimensional operating diagram 
$\left(D,S_{1{\rm in}}\right)$ obtained by cuts at $S_{2{\rm in}}$ constant of the 3-dimensional operating diagram shown in Fig.~\ref{figOD3D} and corresponding to  Fig.~\ref{figHi}(a). (a):
$S_{2{\rm in}}=0$, 
(b): $S_{2{\rm in}}=15$, 
(c): $S_{2{\rm in}}=S_2^M\simeq 36.332$ and (d): $S_{2{\rm in}}=100$.}\label{DS1inBenyahia6}
\end{figure}
%%%%%%%%%%%%%%%%%%%%%
%%%%%%%%%%%%%%%%%%%%%
\begin{figure}[th]
\setlength{\unitlength}{1.0cm}
\begin{center}
\begin{picture}(8.5,14.7)(0,0.2)		
\put(-3,6.5){{\includegraphics[scale=0.34]{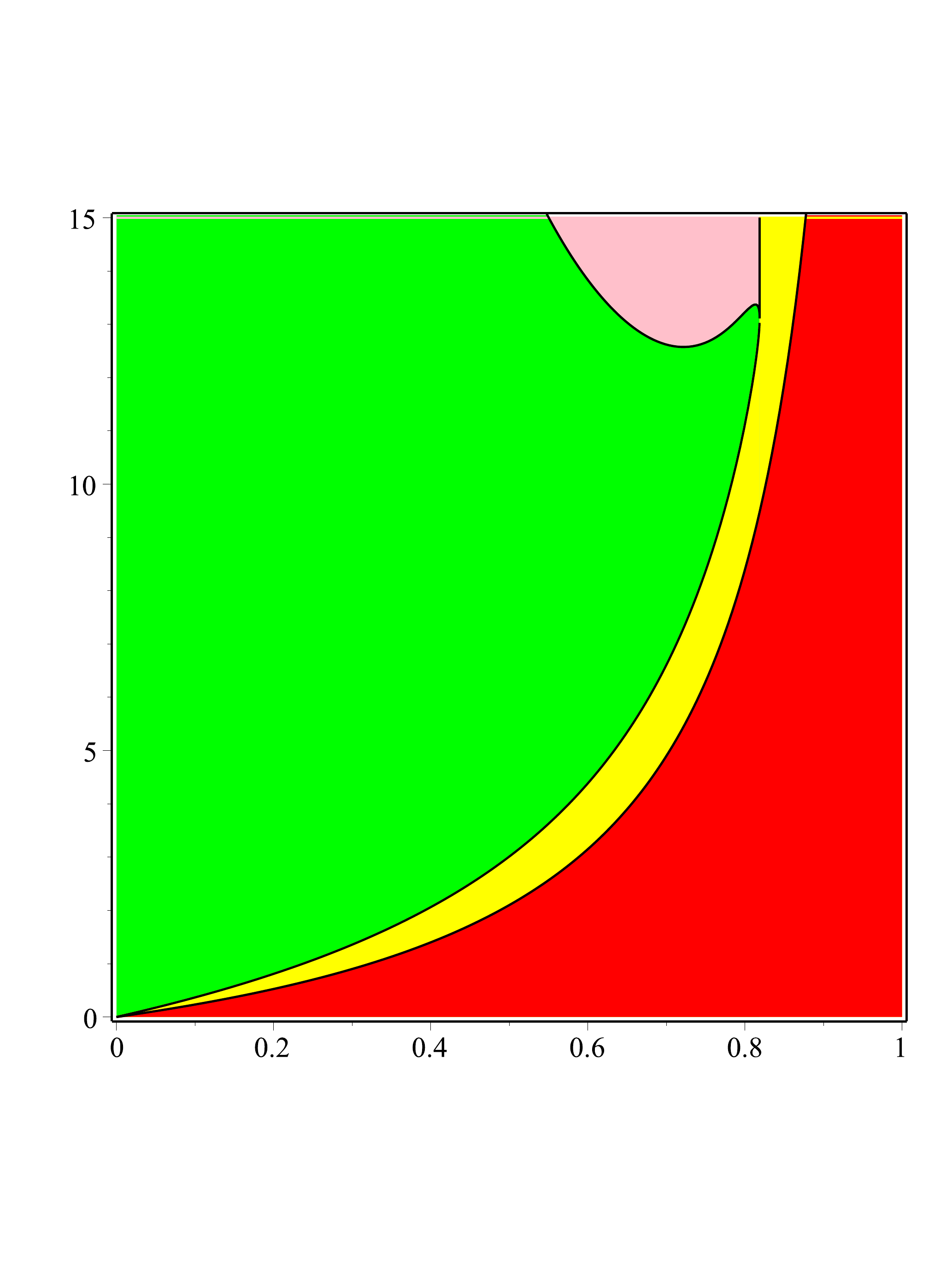}}}
\put(5,6.5){{\includegraphics[scale=0.34]{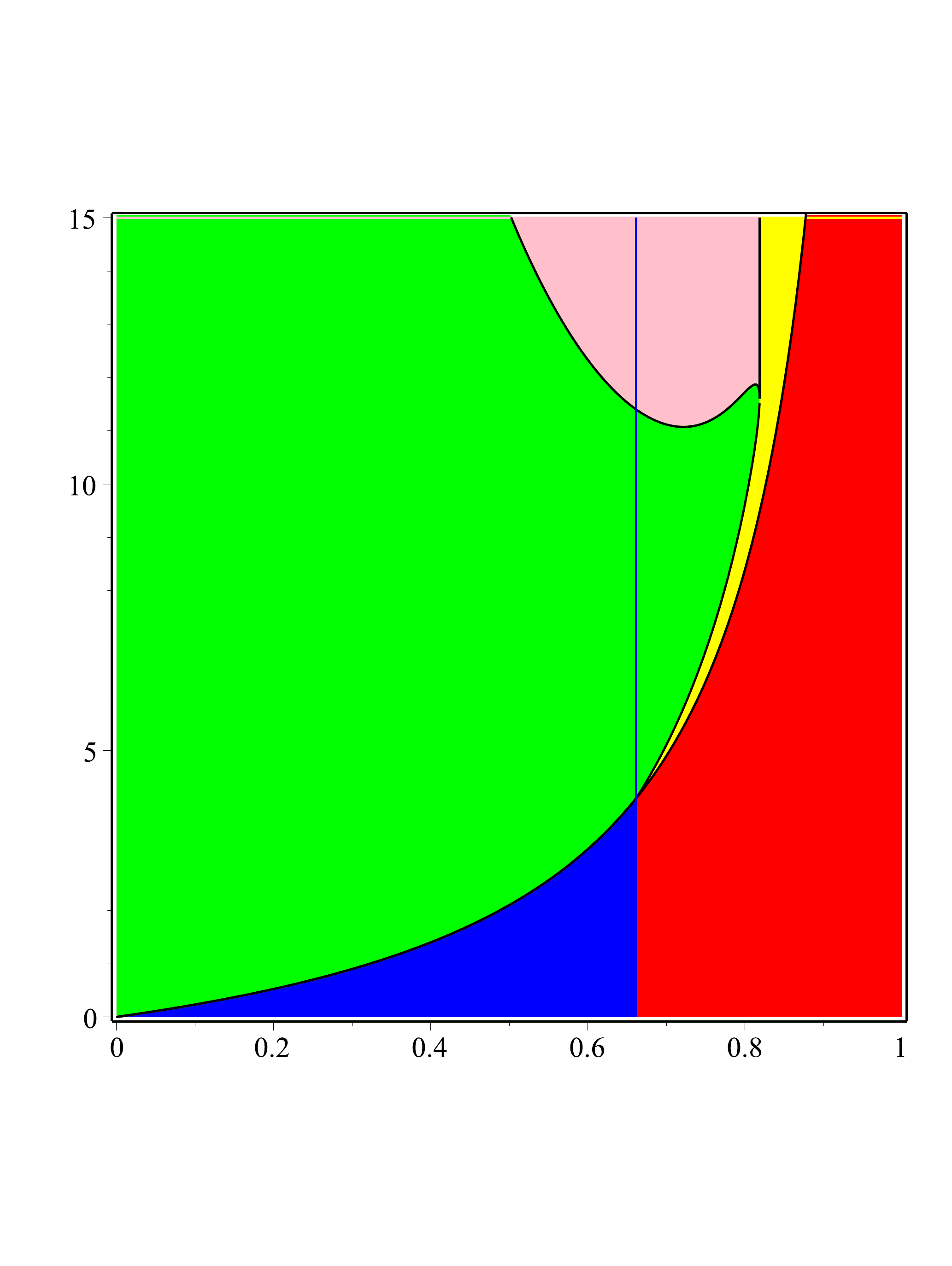}}}
\put(-3,-0.5){{\includegraphics[scale=0.34]{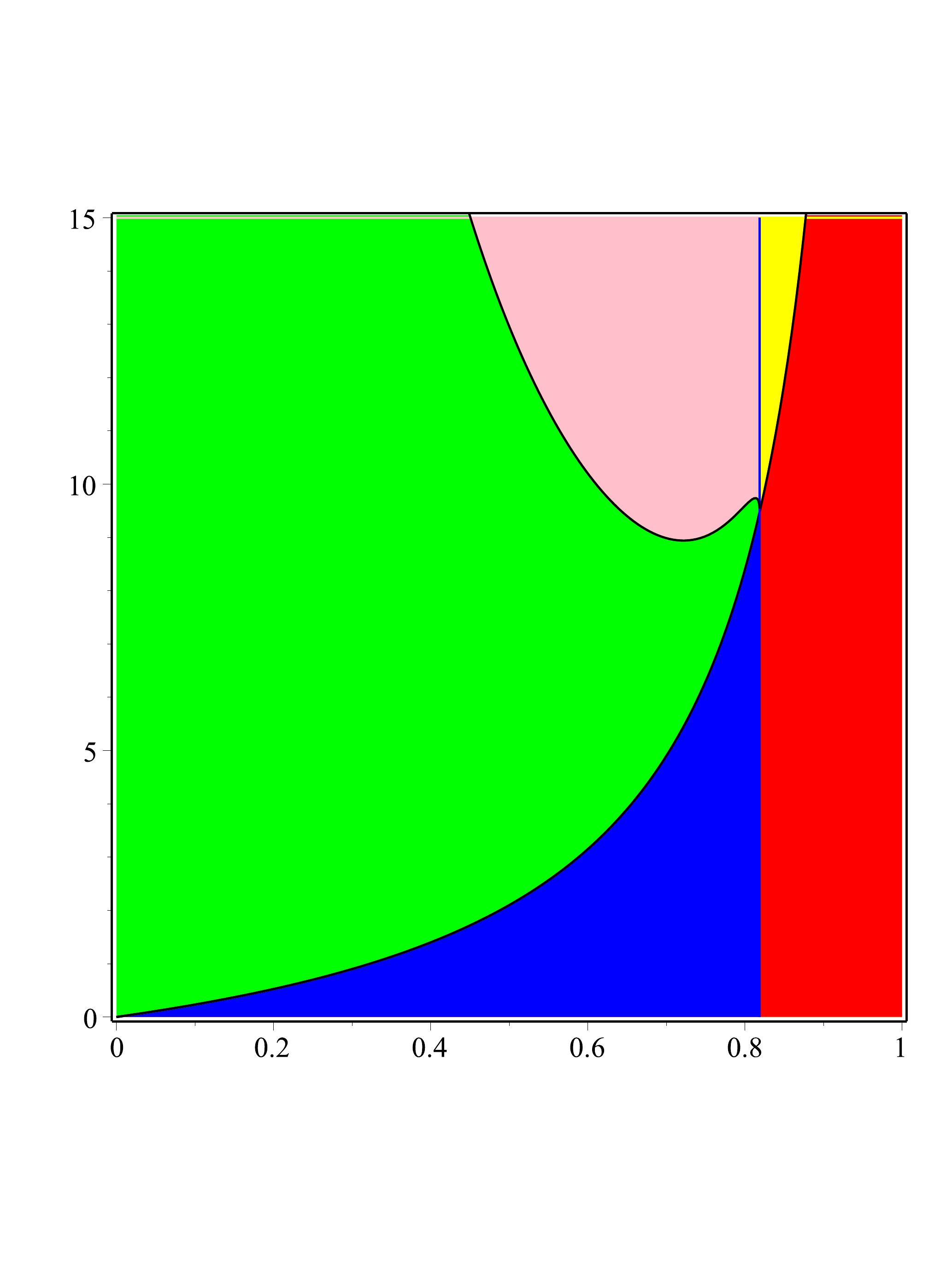}}}
\put(5,-0.5){{\includegraphics[scale=0.34]{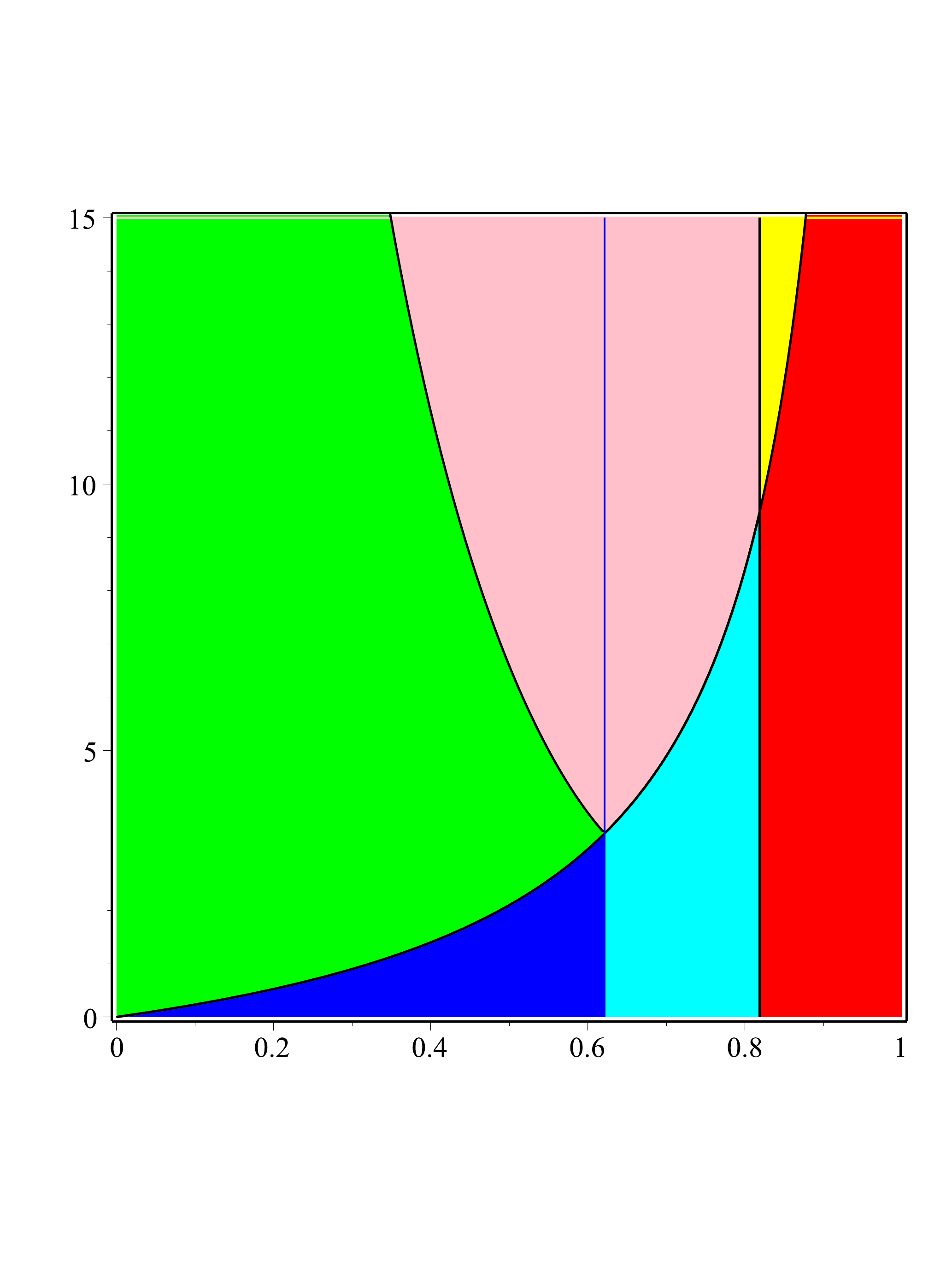}}}
%==============================================
\put(-3,8.5){(a)}
\put(5,8.5){(b)}	
\put(-3,1.5){(c)}
\put(5,1.5){(d)}		
\put(-2.9,11.4){$S_{1{\rm in}}$}
\put(5.1,11.4){$S_{1{\rm in}}$}	
\put(-2.9,4.4){$S_{1{\rm in}}$}
\put(5.1,4.4){$S_{1{\rm in}}$}	
\put(0.9,7.9){$D$}
\put(8.9,7.9){$D$}	
\put(0.9,0.9){$D$}
\put(8.9,0.9){$D$}
\put(2.9,11){$\mathcal{I}_0$}
\put(10.9,11){$\mathcal{I}_0$}
\put(3,4){$\mathcal{I}_0$}
\put(11.1,4){$\mathcal{I}_0$}
\put(9,8.8){$\mathcal{I}_1$}
\put(1.7,2){$\mathcal{I}_1$}
\put(8.5,1.6){$\mathcal{I}_1$}
\put(10,2){$\mathcal{I}_2$}	
\put(1.8,10.6){$\mathcal{I}_3$}
\put(10.6,13.7){$\mathcal{I}_3$}
\put(2.7,6.7){$\mathcal{I}_3$}
\put(10.6,6.7){$\mathcal{I}_3$}
\put(0,11){$\mathcal{I}_4$}
\put(10,12){$\mathcal{I}_4$}
\put(1.8,13.7){$\mathcal{I}_5$}
\put(10,13.7){$\mathcal{I}_5$}
\put(8,11){$\mathcal{I}_6$}
\put(0.5,4){$\mathcal{I}_6$}
\put(7.5,4){$\mathcal{I}_6$}
\put(9.2,13.7){$\mathcal{I}_7$}
\put(1.5,6.7){$\mathcal{I}_7$}
\put(8.6,6.7){$\mathcal{I}_7$}
\put(9.7,6.7){$\mathcal{I}_8$}
\put(3,14.5){$\Gamma_1$}
\put(11,14.5){$\Gamma_1$}
\put(3,7.5){$\Gamma_1$}
\put(11,7.5){$\Gamma_1$}
\put(9.6,14.5){$\Gamma_2$}
\put(9.4,7.5){$\Gamma_3$}
\put(0.5,9.6){$\Gamma_4$}
\put(10,11.4){$\Gamma_4$}
\put(0.9,14.5){$\Gamma_5$}
\put(8.6,14.5){$\Gamma_5$}
\put(0.3,7.5){$\Gamma_5$}
\put(7.7,7.5){$\Gamma_5$}
\put(2.5,14.5){$\Gamma_6$}
\put(10.5,14.5){$\Gamma_6$}
\put(2.5,7.5){$\Gamma_6$}
\put(10.5,7.5){$\Gamma_6$}
\end{picture}
\end{center}
\vspace{-0.9cm}
\caption{The 2-dimensional operating diagram 
$\left(D,S_{1{\rm in}}\right)$ obtained by cuts at $S_{2{\rm in}}$ constant of the 3-dimensional operating diagram corresponding to Fig.~\ref{figHi}(b). (a):
$S_{2{\rm in}}=0$, 
(b): $S_{2{\rm in}}=15$, 
(c): $S_{2{\rm in}}=S_2^M\approx 36.332$ and (d): $S_{2{\rm in}}=100$.}\label{DS1inBenyahia5}
\end{figure}

%=================================
\section{Operating diagram in $\left(D,S_{1{\rm in}}\right)$ where $S_{2{\rm in}}$ is kept constant}
\label{ODS2inconstant}

The intersections of  
$\Gamma_2$ and $\Gamma_3$ and $\Gamma_6$ surfaces with a plane where $S_{2{\rm in}}$ is kept constant are vertical lines, and the intersections of  
$\Gamma_1$, $\Gamma_4$ and $\Gamma_5$ surface with this plane are curves of funtions of $D$,  as shown in Table \ref{IntersectionS2in}. 
Curves 
$\Gamma_1$ and $\Gamma_6$ do not depend 
on $S_{2{\rm in}}$ while curves
$\Gamma_2$, $\Gamma_3$, $\Gamma_4$ and $\Gamma_5$ depend on $S_{2{\rm in}}$. 
Note that curves $\Gamma_4$ and $\Gamma_5$ 
simply consist of translating downwards the  $H_1$ and $H_2$ function curves, shown 
in Fig.~\ref{figHi}, and multiplying by $k_1/k_2$. 
The curves $\Gamma_k$, $k=1\cdots 6$, separate the operating parameter plane
$\left(D,S_{1{\rm in}}\right)$ in up to nine regions
$\mathcal{I}_k$, $k=0\cdots8$. We begin by considering the case where $D_2<D_1$ corresponding to  
Figs.~\ref{figHi}(a) and \ref{figHi}(b).

\subsection{Operating diagram when $D_2<D_1$}

The cuts at $S_{2{\rm in}}$ constant of the 3-dimensional operating diagram shown in 
Fig.~\ref{figOD3D} and corresponding to Fig.~\ref{figHi}(a), are shown in 
Fig.~\ref{DS1inBenyahia6}. The regions are colored according to the colors in 
Table~\ref{Table9cases}.  
Fig.~\ref{DS1inBenyahia6} shows the following features.

For 
$S_{2{\rm in}}=0$, only the regions
$\mathcal{I}_0$, $\mathcal{I}_3$, 
$\mathcal{I}_4$ and $\mathcal{I}_5$ exist, 
see Fig.~\ref{DS1inBenyahia6}(a). 
For $0<S_{2{\rm in}}<S_2^M$, $\Gamma_2$ curve appears, giving birth to $\mathcal{I}_1$, 
$\mathcal{I}_6$
and 
$\mathcal{I}_7$ regions, see 
Fig.~\ref{DS1inBenyahia6}(b). 
For increasing $S_{2{\rm in}}$,
$\Gamma_4$ and $\Gamma_5$ curves are translated downwards, while 
the vertical line 
$\Gamma_2$ moves to the right and tends towards the vertical line $\Gamma_6$, as
$S_{2{\rm in}}$ tends to $S_2^M$. 

For 
$S_{2{\rm in}}=S_2^M$, $\Gamma_4$ curve disappears, while 
$\Gamma_2$ becomes equal to $\Gamma_6$, so that $\mathcal{I}_4$ and 
$\mathcal{I}_5$ regions have disappeared, see Fig.~\ref{DS1inBenyahia6}(c). 
For $S_{2{\rm in}}>S_2^M$, $\Gamma_3$ curve appears, giving birth to $\mathcal{I}_2$
and $\mathcal{I}_8$ regions, see 
Fig.~\ref{DS1inBenyahia6}(d). 
For increasing $S_{2{\rm in}}$, 
the vertical line $\Gamma_3$ moves to the left, while $\Gamma_5$ curve is translated downwards.

The cuts $S_{2{\rm in}}$ constant of the 3-dimensional operating diagram corresponding to  
Fig.~\ref{figHi}(b), are shown in 
Fig.~\ref{DS1inBenyahia5}. This figure has the same qualitative characteristics as  
Fig.~\ref{DS1inBenyahia6}: presence of only
$\mathcal{I}_0$, $\mathcal{I}_3$, 
$\mathcal{I}_4$ and $\mathcal{I}_5$ regions when $S_{2{\rm in}}=0$, see 
Fig.~\ref{DS1inBenyahia5}(a); appearance of
$\mathcal{I}_1$, $\mathcal{I}_6$ and 
$\mathcal{I}_7$ regions when 
$0<S_{2{\rm in}}<S_2^M$, see 
Fig.~\ref{DS1inBenyahia5}(b); disappearance of $\mathcal{I}_4$ and $\mathcal{I}_5$ regions when $S_{2{\rm in}}= S_2^M$, see 
Fig.~\ref{DS1inBenyahia5}(c);
appearance of
$\mathcal{I}_2$ and $\mathcal{I}_8$ regions when $S_{2{\rm in}}>S_2^M$, see 
Fig.~\ref{DS1inBenyahia5}(d). 

It should be noticed that 
in Fig.~\ref{DS1inBenyahia5}, 
the region of global asymptotic stability of the positive steady state $E_2^1$ (the Green region 
$\mathcal{I}_4\cup\mathcal{I}_6$)  presents the very surprising property that there exists a range of values for the operating parameters $S_{1{\rm in}}$ and 
$S_{2{\rm in}}$ such that the system can go from the bistability region 
(the Pink region 
$\mathcal{I}_5\cup\mathcal{I}_7$), 
to the global asymptotic stability region, when the dilution rate $D$ increases. Indeed, the boundary $\Gamma_5$ of Green and Pink regions has an increasing part, with respect to parameter $D$. Therefore, near this part of $\Gamma_5$, as  
$S_{1{\rm in}}$ is kept constant and $D$ increases the system goes from  
$\mathcal{I}_5$ to $\mathcal{I}_4$, see 
Fig.~\ref{DS1inBenyahia5}(a) and \ref{DS1inBenyahia5}(b), or goes from 
$\mathcal{I}_7$ to $\mathcal{I}_6$, see 
Fig.~\ref{DS1inBenyahia5}(c). 

This possibility of globally stabilizing the system, which presents bistability, is surprising since the global stability of the positive steady state is more likely obtained by decreasing $D$ rather than increasing it. This unespecated behavior was first observed in a slightly different two-step model, where the first kinetics is of Contois type \cite{Hanaki}. This behavior is investigated in \cite{HDR}.

It is worth-noting that this unexpected  behavior can occur only for suitable values of the biological parameters. For instance, in Fig.~\ref{DS1inBenyahia6}, where all biological parameters are the same as in 
Fig.~\ref{DS1inBenyahia5}, excepted that $m_1$ is changed from $m_1=0.5$ to $m_1=0.6$, the behavior does not occur and a transition from Pink region to Green region is possible only by decreasing $D$.

%%%%%%%%%%%%%%%%%%%%%
\begin{figure}[th]
\setlength{\unitlength}{1.0cm}
\begin{center}
\begin{picture}(8.5,14.7)(0,0.2)		
\put(-3,6.5){{\includegraphics[scale=0.34]{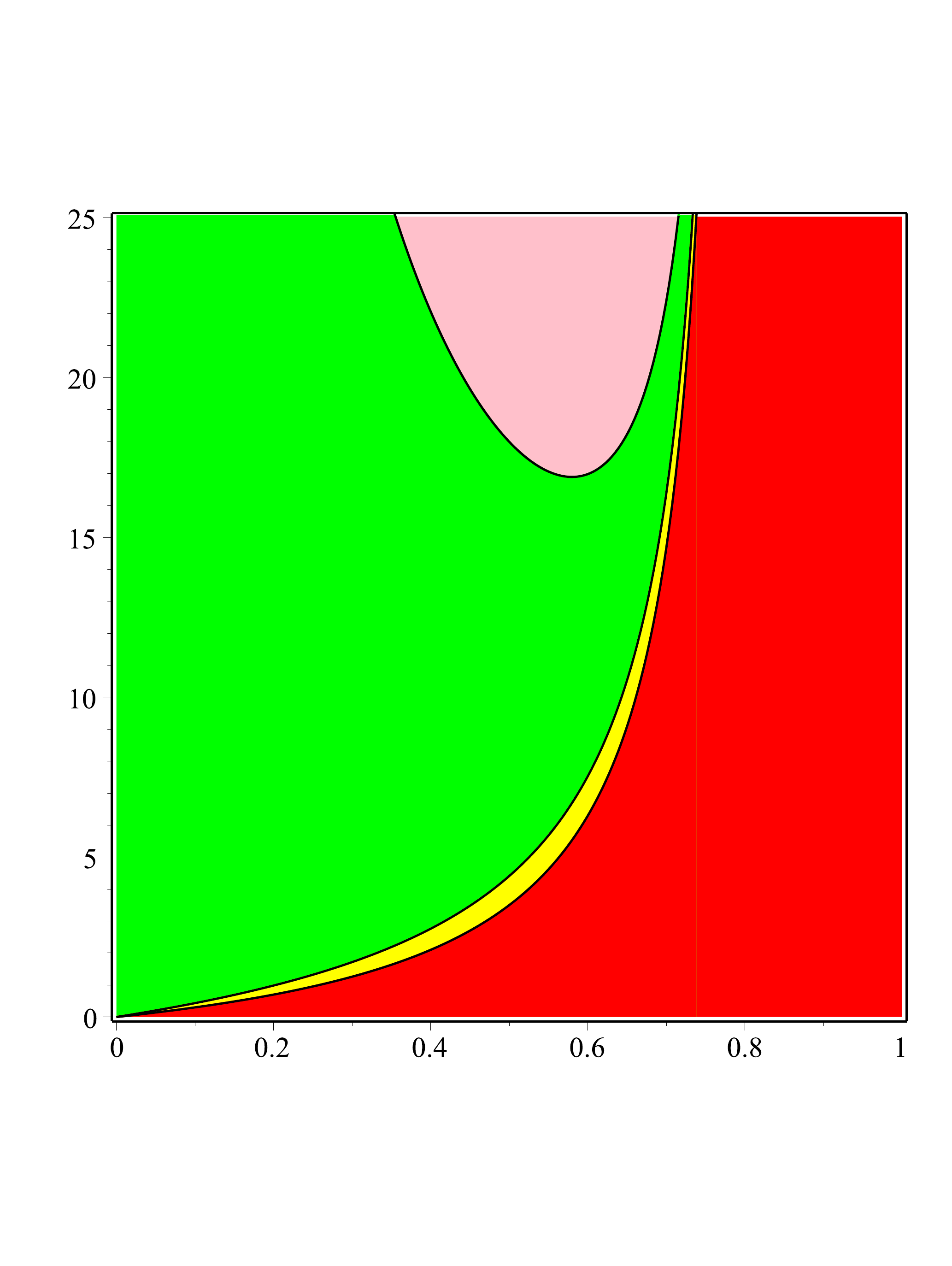}}}
\put(5,6.5){{\includegraphics[scale=0.34]{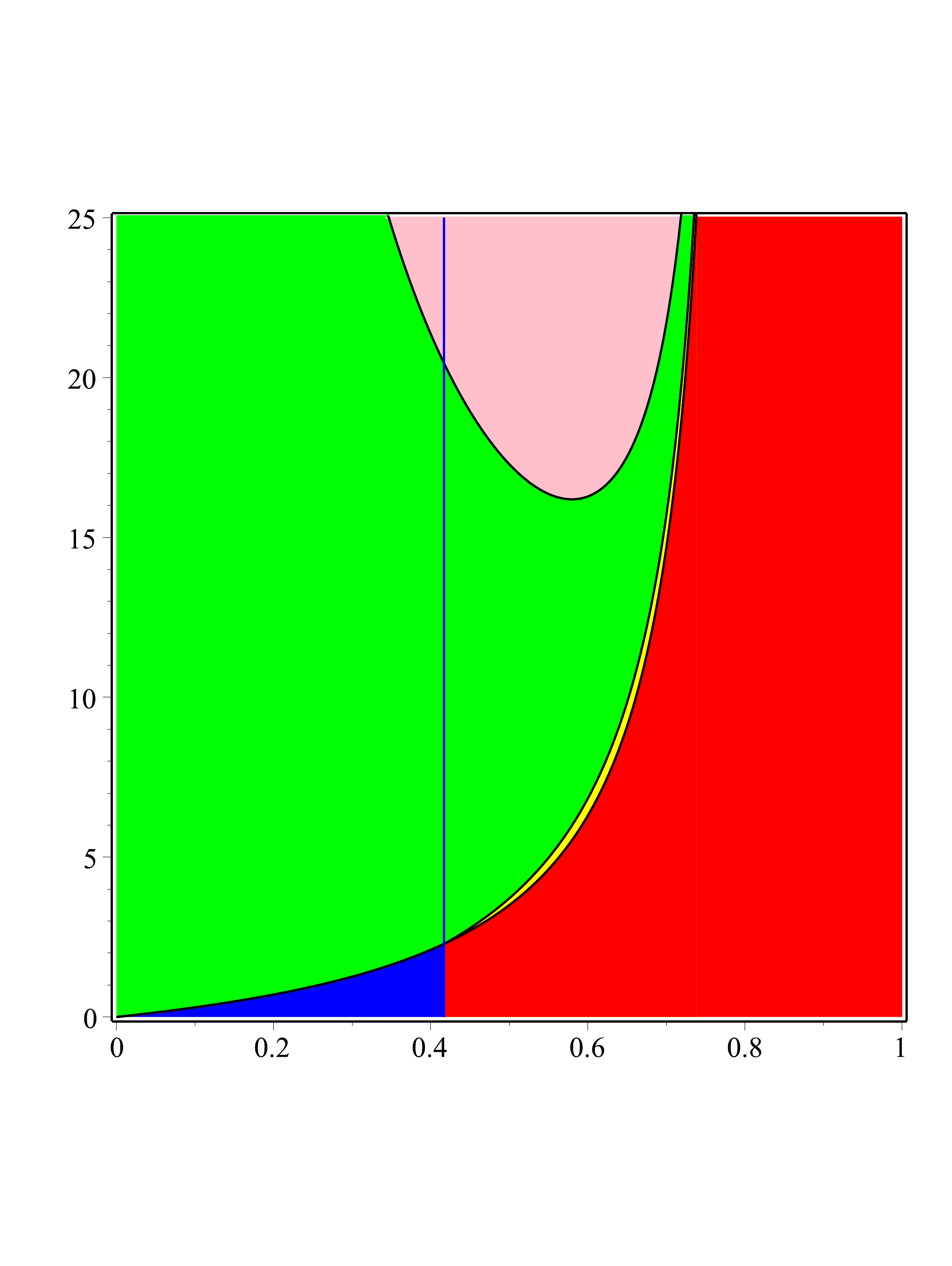}}}
\put(-3,-0.5){{\includegraphics[scale=0.34]{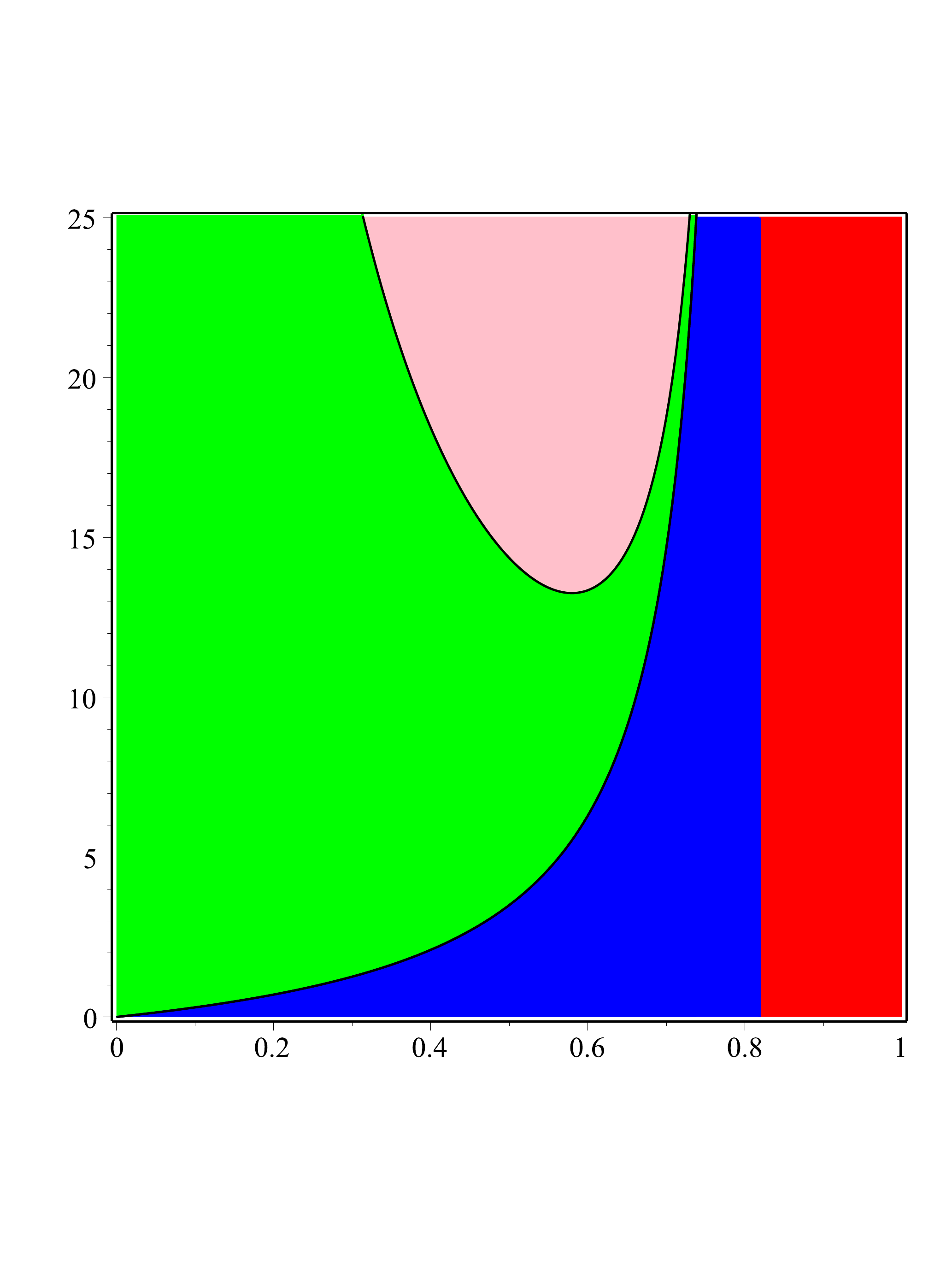}}}
\put(5,-0.5){{\includegraphics[scale=0.34]{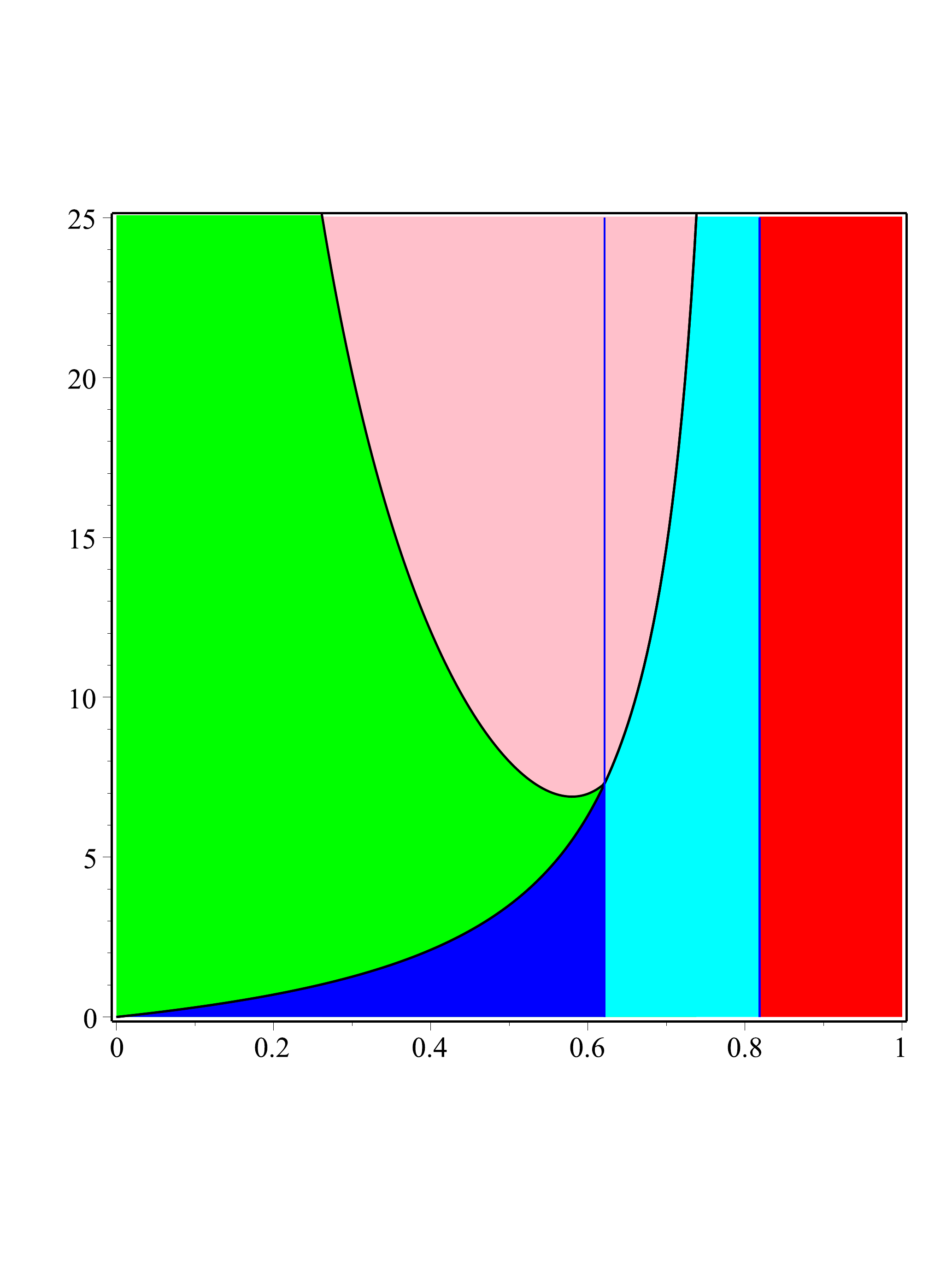}}}
%==============================================
\put(-3,8.5){(a)}
\put(5,8.5){(b)}	
\put(-3,1.5){(c)}
\put(5,1.5){(d)}		
\put(-2.9,11.4){$S_{1{\rm in}}$}
\put(5.1,11.4){$S_{1{\rm in}}$}	
\put(-2.9,4.4){$S_{1{\rm in}}$}
\put(5.1,4.4){$S_{1{\rm in}}$}	
\put(0.9,7.9){$D$}
\put(8.9,7.9){$D$}	
\put(0.9,0.9){$D$}
\put(8.9,0.9){$D$}
\put(2.7,10){$\mathcal{I}_0$}
\put(10.7,10){$\mathcal{I}_0$}
\put(3,4){$\mathcal{I}_0$}
\put(11,4){$\mathcal{I}_0$}
\put(7.9,8.45){$\mathcal{I}_1$}
\put(1.5,2){$\mathcal{I}_1$}
\put(8.7,1.6){$\mathcal{I}_1$}
\put(10,3){$\mathcal{I}_2$}	
\put(1.1,9){\vector(-1,2){0.2}}
\put(1.1,8.8){$\mathcal{I}_3$}
\put(9.46,9.3){\vector(-1,2){0.2}}
\put(9.45,9.1){$\mathcal{I}_3$}
\put(0,10){$\mathcal{I}_4$}
\put(8.7,10){$\mathcal{I}_4$}
\put(1,13.5){$\mathcal{I}_5$}
\put(9,13.5){$\mathcal{I}_5$}
\put(7,10){$\mathcal{I}_6$}
\put(-0.5,3){$\mathcal{I}_6$}
\put(7,3){$\mathcal{I}_6$}
\put(8,13.6){\vector(1,2){0.2}}
\put(7.7,13.4){$\mathcal{I}_7$}
\put(1,6){$\mathcal{I}_7$}
\put(8.6,6){$\mathcal{I}_7$}
\put(9.6,6){$\mathcal{I}_8$}
\put(2.3,11){\vector(-1,0){0.4}}
\put(2.3,10.85){$\Gamma_1$}
\put(10.3,11){\vector(-1,0){0.4}}
\put(10.3,10.85){$\Gamma_1$}
\put(2.3,4){\vector(-1,0){0.4}}
\put(2.3,3.85){$\Gamma_1$}
\put(10.3,4){\vector(-1,0){0.4}}
\put(10.3,3.85){$\Gamma_1$}
\put(8.2,14.5){$\Gamma_2$}
\put(8.7,7.5){$\Gamma_3$}
\put(1.25,11){\vector(1,0){0.4}}
\put(0.9,10.85){$\Gamma_4$}
\put(9.3,11){\vector(1,0){0.4}}
\put(8.95,10.85){$\Gamma_4$}
\put(-0.1,14.5){$\Gamma_5$}
\put(7.7,14.5){$\Gamma_5$}
\put(-0.3,7.5){$\Gamma_5$}
\put(7.4,7.5){$\Gamma_5$}
\put(2.6,7.5){$\Gamma_6$}
\put(10.6,7.5){$\Gamma_6$}
\end{picture}
\end{center}
\vspace{-0.9cm}
\caption{The 2-dimensional operating diagram 
$\left(D,S_{1{\rm in}}\right)$ obtained by cuts at $S_{2{\rm in}}$ constant of the 3-dimensional operating diagram corresponding to Fig.~\ref{figHi}(c). (a):
$S_{2{\rm in}}=0$, 
(b): $S_{2{\rm in}}=7$, 
(c): $S_{2{\rm in}}=S_2^M\simeq 36.3$ and (d): $S_{2{\rm in}}=100$.}\label{DS1inBenyahia4}
\end{figure}

\subsection{Operating diagram when $D_1<D_2$}

The cuts at $S_{2{\rm in}}$ constant of the 3-dimensional operating diagram corresponding to Fig.~\ref{figHi}(c), are shown in Fig.~\ref{DS1inBenyahia4}. The regions are colored according to the colors in 
Table~\ref{Table9cases}. 
Since $D_1<D_2$ there exists a value 
$S_2^0<S_2^M$ such that $\mu_2(S_2^0)=\alpha D_1$. 

Fig.~\ref{DS1inBenyahia4} shows the following features.
For 
$S_{2{\rm in}}=0$, only regions
$\mathcal{I}_0$, $\mathcal{I}_3$, 
$\mathcal{I}_4$ and $\mathcal{I}_5$ appear, 
see Fig.~\ref{DS1inBenyahia4}(a). 
For $0<S_{2{\rm in}}<S_2^0$, 
$\Gamma_2$ curve appears, giving birth to $\mathcal{I}_1$, $\mathcal{I}_6$, 
$\mathcal{I}_7$ regions, see 
Fig.~\ref{DS1inBenyahia4}(b). 
For increasing $S_{2{\rm in}}$,
$\Gamma_4$ and 
$\Gamma_5$ curves are translated downwards, 
while the vertical line 
$\Gamma_2$ moves to the right and
 tends towards the common vertical asymptote $D=D_1$ for curves $\Gamma_1$, $\Gamma_4$ and 
$\Gamma_5$, as $S_{2{\rm in}}$ tends to $S_2^0$. In the limit 
$S_{2{\rm in}}=S_2^0$, the very tiny region
$\mathcal{I}_3$ (in Yellow on the figure) located between 
curves $\Gamma_1$ and 
$\Gamma_4$, together with $\mathcal{I}_4$ and 
$\mathcal{I}_5$ regions have disappeared.  

For $S_2^0<S_{2{\rm in}}\leq S_2^M$, only regions
$\mathcal{I}_0$, $\mathcal{I}_1$, 
$\mathcal{I}_6$ and $\mathcal{I}_7$ exist. For increasing $S_{2{\rm in}}$, 
the vertical line 
$\Gamma_2$ moves to the right and tends towards $\Gamma_6$ as $S_{2{\rm in}}$ tends to $S_2^M$, see 
Fig.~\ref{DS1inBenyahia4}(c).
For $S_{2{\rm in}}>S_2^M$, $\Gamma_3$ curve appears, giving birth to $\mathcal{I}_2$
and $\mathcal{I}_8$ regions, see 
Fig.~\ref{DS1inBenyahia4}(d). 
For increasing $S_{2{\rm in}}$, 
the vertical line $\Gamma_3$ moves to the left while $\Gamma_5$ curve is translated downwards.

It should be noticed that as the case (B), it is seen in  
Fig.~\ref{DS1inBenyahia4} that 
the region of global asymptotic stability of the positive steady state $E_2^1$ (the Green region 
$\mathcal{I}_4\cup\mathcal{I}_6$)  presents the property that there exists a range of values for the operating parameters $S_{1{\rm in}}$ and 
$S_{2{\rm in}}$ such that the system can go from the bistability region 
(the Pink region 
$\mathcal{I}_5\cup\mathcal{I}_7$), 
to the global asymptotic stability region, when the dilution rate $D$ increases.

\section{Bifurcations}\label{sec4}

The surfaces $\Gamma_k$, $k=1\cdots 6$, are the borders of the regions in the operating parameters space 
$(D,S_{1{\rm in}},S_{2{\rm in}})$ on which bifurcations occur, while the steady states change their stability. In codimension-one bifurcations, only transcritical and saddle node bifurcations can be encountered, as stated in the following result. 

%%%%%%%%%%%%%%%%%%%
\begin{propo}                                           \label{proBifurcations}

The bifurcations of the steady states of (\ref{AM2})  arising on the boundaries of regions  $\mathcal{I}_k$, $k=0\cdots 8$, are listed in Table \ref{Bifurcation}.
\end{propo}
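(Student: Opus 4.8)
The plan is to exploit the one-way (commensalistic) coupling of (\ref{AM2}): since $\dot S_1,\dot X_1$ do not depend on $(S_2,X_2)$, the Jacobian $J$ of (\ref{AM2}) at any steady state is block lower-triangular,
\[
J=\begin{pmatrix} A & 0\\ C & B\end{pmatrix},
\]
where $A$ is the $2\times2$ Jacobian of the $(S_1,X_1)$-subsystem, $B$ that of the $(S_2,X_2)$-subsystem, and $C$ the coupling. Hence $\mathrm{spec}(J)=\mathrm{spec}(A)\cup\mathrm{spec}(B)$, and every bifurcation reduces to a scalar computation inside one block. Reading the conditions of Table \ref{TableSumExisStab}, I would first check that on the interior of each surface a \emph{single} eigenvalue vanishes: on $\Gamma_1$ the block $A$ has a simple zero eigenvalue $\mu_1(S_{1\mathrm{in}})-\alpha D$ at $E_1^0,E_1^1,E_1^2$, each colliding with $E_2^0,E_2^1,E_2^2$ (the coordinate $X_1^*$ vanishes); on $\Gamma_2,\Gamma_3$ the block $B$ has a simple zero eigenvalue $\mu_2(S_{2\mathrm{in}})-\alpha D$ at $E_1^0$, colliding with $E_1^1$, resp.\ $E_1^2$ (the coordinate $X_2^i$ vanishes); on $\Gamma_4,\Gamma_5$ the block $B$ has a simple zero eigenvalue $\mu_2(S_{2\mathrm{in}}^*)-\alpha D$ at $E_2^0$, colliding with $E_2^1$, resp.\ $E_2^2$ (the coordinate $X_2^{i*}$ vanishes); and on $\Gamma_6$, where $S_2^{1*}=S_2^{2*}=S_2^M$ and $\mu_2'(S_2^M)=0$, the block $B$ again has a zero eigenvalue and the two positive-$X_2$ branches merge, $E_1^1$ with $E_1^2$ and $E_2^1$ with $E_2^2$.

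Next I would use the dilution rate $D$ as the bifurcation parameter throughout. The crossing is transverse because the critical eigenvalue is $\mu_i(\cdot)-\alpha D$, whose $D$-derivative is $-\alpha\neq0$. The block form makes the null vectors explicit: at every one of these points $B$ (or $A$ on $\Gamma_1$) reduces to $\left(\begin{smallmatrix}-D & -k\alpha D\\ 0 & 0\end{smallmatrix}\right)$, so the left null vector of $J$ collapses, after eliminating the invertible block through $C$, to a single biomass direction, $w=(0,1,0,0)$ on $\Gamma_1$ and $w=(0,0,0,1)$ on $\Gamma_2$–$\Gamma_6$, while the right null vector $v$ is supported on the critical block. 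The decisive quantity in Sotomayor's test is then $w^{\mathsf T}f_D=-\alpha X_{\mathrm{crit}}$, where $X_{\mathrm{crit}}$ is the biomass of the equilibrium at which the eigenvalue vanishes. On $\Gamma_1$–$\Gamma_5$ this biomass is zero at the colliding boundary equilibrium ($X_1=0$ at $E_1^j$ on $\Gamma_1$; $X_2=0$ at $E_1^0$ on $\Gamma_2,\Gamma_3$ and at $E_2^0$ on $\Gamma_4,\Gamma_5$), so $w^{\mathsf T}f_D=0$: the saddle-node obstruction disappears and, since the mixed and quadratic coefficients $w^{\mathsf T}(Df_D\,v)=-\alpha$ and $w^{\mathsf T}(D^2f(v,v))$ are nonzero (the latter proportional to $\mu_1'>0$ or to $\mu_2'(S_2^{i*})\neq0$ by Hypotheses 1 and 2), Sotomayor's transcritical conditions hold. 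On $\Gamma_6$, by contrast, the colliding equilibria carry $X_2>0$, so $w^{\mathsf T}f_D=-\alpha X_2^1\neq0$ and, since $\mu_2'(S_2^M)=0$, the quadratic coefficient is now proportional to $\mu_2''(S_2^M)\neq0$; these are exactly Sotomayor's saddle-node conditions.

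I expect the main obstacle to be the uniform verification of the nondegeneracy conditions across all six surfaces, rather than any single computation. Three points require care: confirming that the zero eigenvalue stays geometrically simple and that no second eigenvalue vanishes simultaneously on the interior of each $\Gamma_i$ (which fails precisely on the lower-dimensional intersections of the surfaces, e.g.\ where $\Gamma_1$ meets $\Gamma_4$, and which lie outside the scope of a codimension-one statement); evaluating the quadratic Sotomayor coefficient $w^{\mathsf T}(D^2f(v,v))$ at $E_2^0$ on $\Gamma_4,\Gamma_5$, where the critical coordinate is $S_{2\mathrm{in}}^*=S_{2\mathrm{in}}+\tfrac{k_2}{k_1}(S_{1\mathrm{in}}-S_1^*(D))$ and the null vectors must be transported through the nontrivial coupling block $C$; and using the sign information $\mu_2'(S_2^{1*})>0>\mu_2'(S_2^{2*})$ together with $\mu_2''(S_2^M)<0$ from Hypothesis 2 to guarantee that the quadratic coefficient never vanishes away from $S_2^M$ and is the one that survives at $S_2^M$. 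Once these are in place, matching each surface to its bifurcation type reproduces Table \ref{Bifurcation}.
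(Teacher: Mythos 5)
Your proposal is correct in substance but follows a genuinely different route from the paper. The paper's own proof is an \emph{identification} argument, not a dynamical one: it invokes Theorem 1 of \cite{Benyahia}, which already catalogues all non-hyperbolic cases as equalities among the four quantities $S_2^{1*}(D)$, $S_2^{2*}(D)$, $S_{2{\rm in}}$ and ${S}^*_{2{\rm in}}$, translates each equality into one of the surfaces $\Gamma_k$ via Lemma \ref{lem1} (${S}^*_{2{\rm in}}=S_2^{i*}(D)$ iff $S_{2{\rm in}}+\frac{k_2}{k_1}S_{1{\rm in}}=H_i(D)$), reads off from the coalescence pattern which type occurs (crossing of distinct branches $=$ transcritical on $\Gamma_1$--$\Gamma_5$; merging of the two roots $S_2^{1*}=S_2^{2*}$ $=$ saddle-node on $\Gamma_6$), and only handles $\Gamma_1$ directly, that case having been omitted in \cite{Benyahia}. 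No eigenvalue computation or genericity check appears there. You instead verify everything from scratch: the block lower-triangular Jacobian, the spectral splitting ${\rm spec}(J)={\rm spec}(A)\cup{\rm spec}(B)$, the explicit left null vectors $(0,1,0,0)$ and $(0,0,0,1)$ (which are exact, as you can confirm, since the $X_2$-row of the coupling block $C$ is identically zero and the off-critical block is invertible in the interior of each surface), and the Sotomayor coefficients, with $w^{\mathsf T}f_D=-\alpha X_{\rm crit}$ cleanly separating the transcritical surfaces ($X_{\rm crit}=0$) from the saddle-node one ($X_2^{1*}>0$ on $\Gamma_6$). Your argument is more self-contained and in fact establishes strictly more than the paper's proof, which never verifies the nondegeneracy conditions.

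Two of your claims, however, are overstated and need repair. First, the transversality claim ``the critical eigenvalue is $\mu_i(\cdot)-\alpha D$, whose $D$-derivative is $-\alpha\neq0$'' fails on $\Gamma_4$ and $\Gamma_5$: there the critical eigenvalue at $E_2^0$ is $\mu_2\bigl({S}^*_{2{\rm in}}(D,S_{1{\rm in}},S_{2{\rm in}})\bigr)-\alpha D$, and ${S}^*_{2{\rm in}}$ depends on $D$ through $S_1^*(D)$; a short computation using $\mu_2'(S_2^{i*})\,\frac{dS_2^{i*}}{dD}=\alpha$ gives the $D$-derivative $-\mu_2'\bigl(S_2^{i*}(D)\bigr)H_i'(D)$. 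On $\Gamma_4$ this is negative since $H_1$ is increasing, but on $\Gamma_5$ it vanishes exactly at the folds of $H_2$ in case (B) --- precisely the points where the bifurcation values $D_8$ and $D_9$ of Fig.~\ref{figBifSin13.3} collide. The fix is to unfold in $S_{2{\rm in}}$ instead, for which the eigenvalue derivative is $\mu_2'(S_2^{i*})\neq0$ away from $\Gamma_6$; with $D$ alone your argument breaks down at those isolated tangency points. Second, Hypothesis 2 does \emph{not} imply $\mu_2''(S_2^M)\neq0$ (a $\mathcal{C}^1$ kinetics with a degenerately flat maximum satisfies it), so the saddle-node nondegeneracy on $\Gamma_6$ requires an additional generic assumption, satisfied by the Monod--Haldane functions (\ref{MonodHaldane}); to be fair, the paper's proof is silent on this point as well, classifying $\Gamma_6$ as a saddle-node purely from the branch-merging structure.
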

%=====================================================================
\begin{proof}
The proof is given in Appendix \ref{ProofproBifurcations}.	
\end{proof}
%%%%%%%%%%%%%%%%%%%%%%%%

\begin{table}[ht]
\caption{Codimension-one bifurcations along subsets of surfaces $\Gamma_k$ and the corresponding cases in \cite{Benyahia}: Transcritical bifurcations (TB) and Saddle Node bifurcations (SNB) occur.}\label{Bifurcation}
\begin{center}
\begin{tabular}{c|c|l|l}
$\Gamma_k$
&
Subset of $\Gamma_k$
& 
Bifurcation
&
{Case} of \cite{Benyahia}
\\
\hline
  \multirow{3}{*}{{$\Gamma_1$}}
  &   
$\Gamma_1\cap\left\{0\leq S_{2{\rm in}}<S_2^{1*}(D)\right\}$
&
TB: $E_1^0=E_2^0$
\\
&
$\Gamma_1\cap\left\{S_2^{1*}(D)<S_{2{\rm in}}<S_2^{2*}(D)\right\}$
&
TB: $E_1^i=E_2^i$, $i=0,1$
\\
&
$\Gamma_1\cap\left\{S_{2{\rm in}}>S_2^{2*}(D)\right\}$
&
TB: $E_1^i=E_2^i$, $i=0,1,2$ 
  \\
 \hline
$\Gamma_2$
  &   
$\Gamma_2$
&
TB: $E_1^0=E_1^1$
&
{\bf 1.4}, {\bf 2.8}, {\bf 2.9}
  \\
% \hline
 $\Gamma_3$
  &   
$\Gamma_3$
&
TB: $E_1^0=E_1^2$
&
{\bf 1.5}, {\bf 2.13}
  \\
% \hline
$\Gamma_4$
&
$\Gamma_4$
&
TB: $E_2^0=E_2^1$
&
{\bf 2.7}
\\
%\hline
$\Gamma_5$
&
$\Gamma_5$
&
TB: $E_2^0=E_2^2$
&
{\bf 2.12}, {\bf 2.15}
  \\
  \hline
\multirow{3}{*}{
\begin{tabular}{c}
$\Gamma_6$\\
$D_2<D_1$
\end{tabular}
}
&
\begin{tabular}{l}
$\Gamma_6\cap\left\{0\leq S_{2{\rm in}}<S_2^M\mbox{ and }\right.$\\
$\qquad\left. S_{1{\rm in}}>S_1^*(D_2)+\frac{k_1}{k_2}\left(S_2^M-S_{2{\rm in}}\right)
\right\}$
\end{tabular}
&
SNB: $E_2^1=E_2^2$
&
{\bf 2.11}
\\
&
\begin{tabular}{l}
$\Gamma_6\cap\left\{S_{2{\rm in}}>S_2^M\mbox{ and }
S_{1{\rm in}}>S_1^*(D_2)\right\}$
\end{tabular}
&
SNB: $E_j^1=E_j^2$, $j=1,2$
&
{\bf 2.14}
\\
&
\begin{tabular}{l}
$\Gamma_6\cap\left\{S_{2{\rm in}}>S_2^M\mbox{ and }
S_{1{\rm in}}<S_1^*(D_2)
\right\}$
\end{tabular}
&
SNB: $E_1^1=E_1^2$
&
{\bf 1.6}
  \\
 \hline
\begin{tabular}{c}
$\Gamma_6$\\
$D_1<D_2$
\end{tabular}
&
$\Gamma_6\cap\left\{S_{2{\rm in}}>S_2^M\mbox{ and } S_{1{\rm in}}>0
\right\}$
&
SNB: $E_1^1=E_1^2$
&
{\bf 1.6}
\end{tabular}  
\end{center}
  \end{table}

\begin{rem}
The last column of Table \ref{Bifurcation} shows the corresponding cases with {\em non hyperbolic} steady states given in Theorem 1  of \cite{Benyahia}. The case labeled 
{\bf 2.10} in this theorem, where 
$E_1^0=E_1^1$ and $E_2^0=E_2^2$,
 does not appear in Table \ref{Bifurcation}, since it is a codimension-two bifurcation arising along 
 $\Gamma_2\cap\Gamma_5$. The bifurcations along 
 $\Gamma_1$, corresponding to the condition $S_{1{\rm in}}=S_1^*(D)$ were not analyzed in \cite{Benyahia}. In Theorem 1  of \cite{Benyahia} only the cases $S_{1{\rm in}}<S_1^*(D)$ and $S_{1{\rm in}}>S_1^*(D)$ were considered.
\end{rem}

To have a better understanding of the nature of the bifurcations of steady states, let us consider the dilution rate $D$ as the bifurcation parameter. Throughout this section, we assume that biological parameters are fixed as 
in Fig. \ref{DS1inBenyahia5}(a), corresponding to case (b) of Fig. \ref{figHi} and 
$S_{2{\rm in}}=0$.
We now fix the operating parameter 
$S_{1{\rm in}}$ at various typical values, 
as depicted in the horizontal lines shown in Fig. \ref{figBifS1inBenyahia5S2in0}, and plot one-parameter bifurcation diagrams in 
$D$, with $X_i$, $i=1,2$, on the $y$-axis, see Fig.~\ref{figBifSin14}, \ref{figBifSin13} and \ref{figBifSin13.3}.

%======================================
\begin{figure}[ht]
\setlength{\unitlength}{1.0cm}
\begin{center}
\begin{picture}(8.5,6.5)(0,-0.2)		
\put(-4,0){{\includegraphics[scale=0.25]{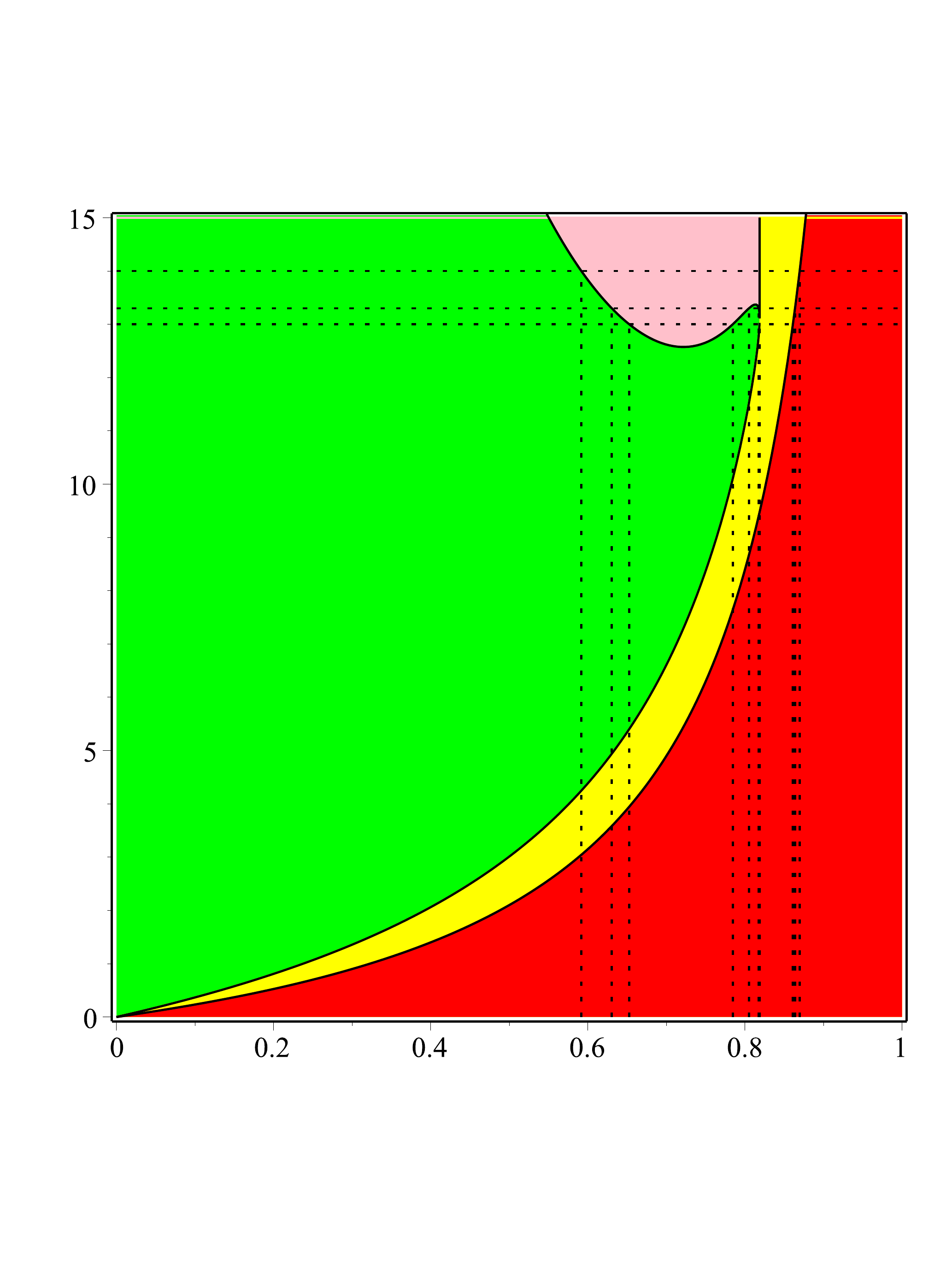}}}
\put(1.5,0){{\includegraphics[scale=0.25]{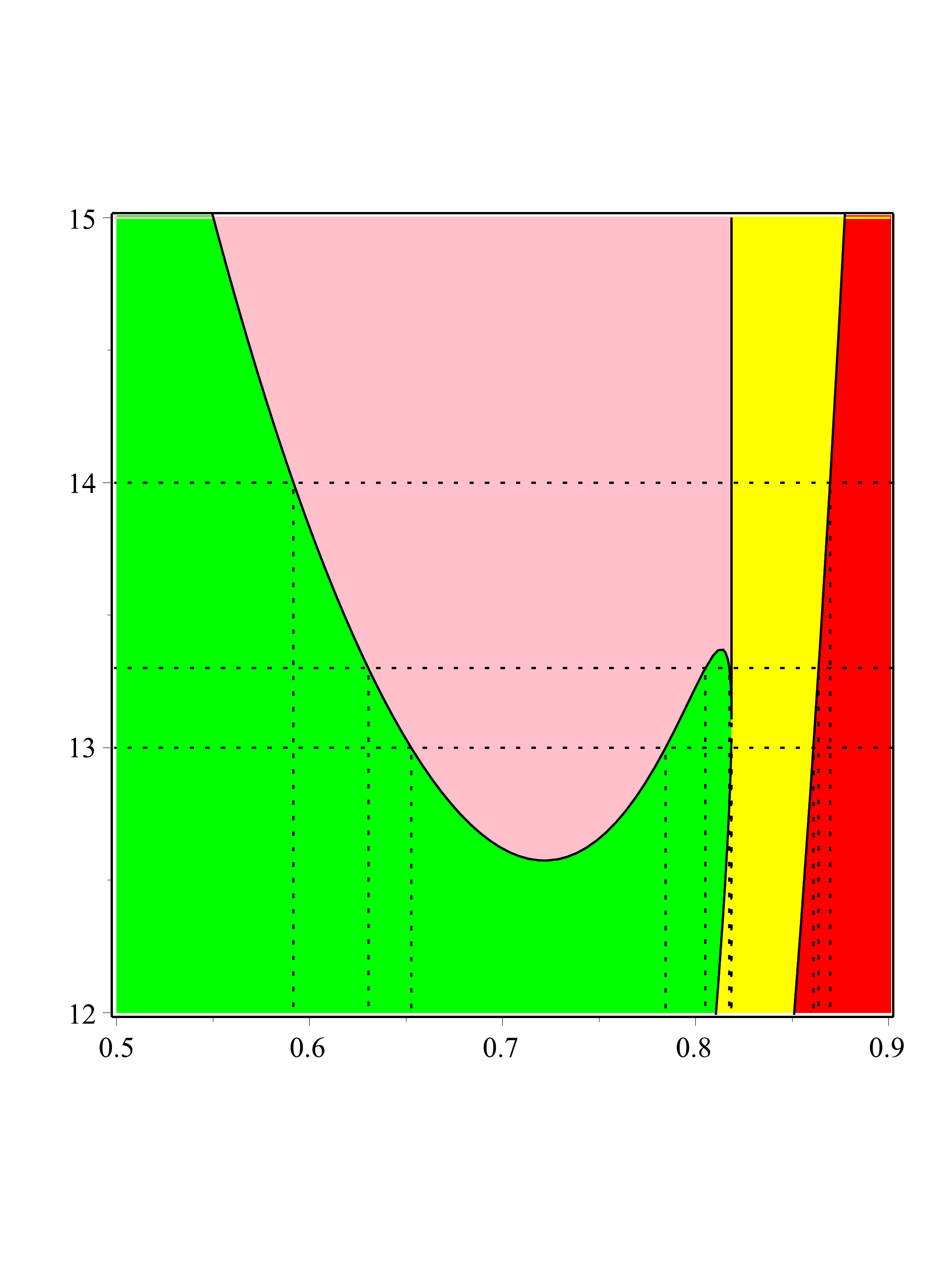}}}
\put(7,0){{\includegraphics[scale=0.25]{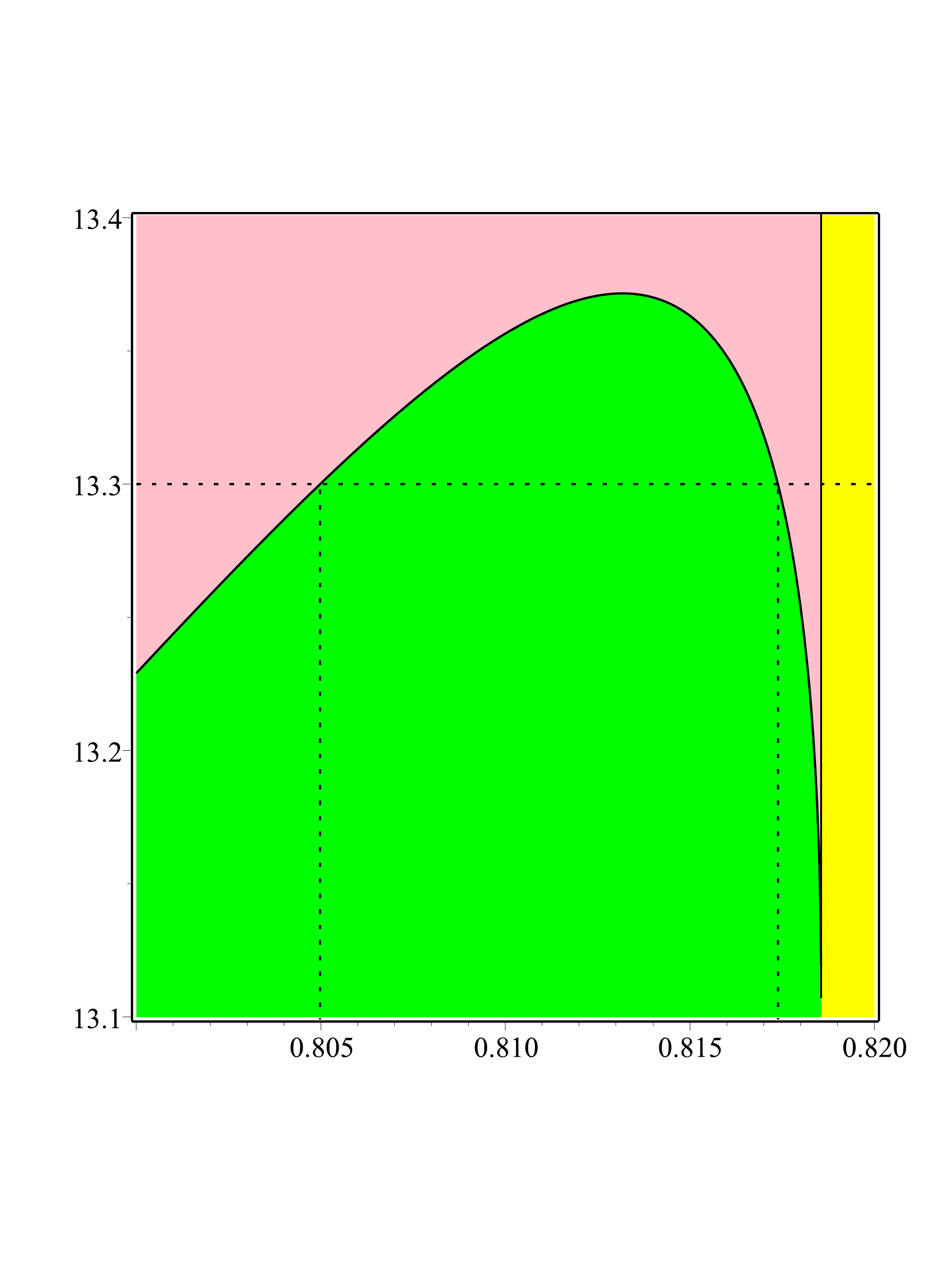}}}
%==============================================
\put(-1.5,0.5){(a)}
\put(4,0.5){(b)}	
\put(9.5,0.5){(c)}	
%%%%%%%%%%%%%%%%%%
\put(-1,0.9){\scriptsize$D_3$}
\put(-0.6,0.9){\scriptsize$D_7$}
\put(-0.7,0.9){$\uparrow$}
\put(-0.9,0.6){\scriptsize$D_{10}$}
\put(-0.2,0.9){\scriptsize$D_6$}
\put(0.1,0.9){$\uparrow$}
\put(-0.3,0.6){\scriptsize$D_{9}D_8D_5D_2$}
\put(0.3,1){\scriptsize$D_4$}
%%%%%%%%%%%%%%%%%%%%%%%
\put(2.9,0.9){\scriptsize$D_3$}
\put(3.7,0.9){\scriptsize$D_7$}
\put(3.4,0.9){$\uparrow$}
\put(3.3,0.6){\scriptsize$D_{10}$}
\put(4.8,1.05){$\nearrow$}
\put(4.55,0.9){\scriptsize$D_6$}
\put(5.1,0.9){\scriptsize$D_9$}
\put(5.45,0.9){$\uparrow$}
\put(5.1,0.6){\scriptsize$D_8D_5D_2$}
\put(5.9,1){\scriptsize$D_4$}
%%%%%%%%%%%%%%
\put(8.5,0.9){\scriptsize$D_9$}
\put(11,0.9){\scriptsize$D_8$}
\put(11.4,0.9){\scriptsize$D_2$}
%%%%%%%%%%%%%%%%
\put(1.1,1.3){$D$}
\put(6.5,1.3){$D$}	
\put(12,1.3){$D$}	
\put(-3.4,5.9){$S_{1{\rm in}}$}
\put(2.1,5.9){$S_{1{\rm in}}$}	
\put(7.6,6){$S_{1{\rm in}}$}
%%%%%%%%%%%%%%%%%%%%%
\put(0.6,2){{$\mathcal{I}_0$}}
\put(-1.3,2.1){{$\mathcal{I}_3$}}
\put(-2,4){{$\mathcal{I}_4$}}
\put(-0.5,5.5){{$\mathcal{I}_5$}}
%%%%%%%%%%%%%%%%%%%%%%%%%%%%%%
\put(3,2){{$\mathcal{I}_4$}}
\put(5.7,5){{$\mathcal{I}_3$}}
\put(4,5){{$\mathcal{I}_5$}}
\put(6,1.6){{$\mathcal{I}_0$}}
%%%%%%%%%%%%%%%%%%%%%%%%
\put(10,3){{$\mathcal{I}_4$}}
\put(11.5,2){{$\mathcal{I}_3$}}
\put(8.5,5){{$\mathcal{I}_5$}}
\end{picture}
\end{center}
\vspace{-0.9cm}
\caption{Operating diagram where $S_{2{\rm in}}=0$ corresponding to Fig.~\ref{DS1inBenyahia5}(a).  
(a): Cuts where $S_{1{\rm in}}$ is kept constant and $D$ is the bifurcation parameter.
(b): Magnification of the operating diagram showing the bifurcation values $D_k$, defined by (\ref{Dk1}), (\ref{Dk2}), and (\ref{Dk3}). Notice that there are three different values of $D_4$ corresponding to the three different values $S_{1{\rm in}}=13$, $S_{1{\rm in}}=13.3$ and $S_{1{\rm in}}=14$. (c) : Magnification showing the values $D=D_9$, $D=D_8$ and $D=D_2$.} \label{figBifS1inBenyahia5S2in0}
\end{figure}
%=======================================

Recall that the curve $\Gamma_5$ separating the Pink and Green regions is the curve of the function 
$S_{1{\rm in}}=\frac{k_1}{k_2}H_2(D)$. 
Case (B) corresponds to a function $H_2$ which is decreasing, then increasing, then decreasing. For the considered biological parameters values, the function $H_2(D)$ attains its minimum for 
$D_{min}\simeq 0.72$ and its maximum for 
$D_{max}=0.81$ and satisfies
$H_2(D_2)=S_2^M+\frac{k_2}{k_1}S_1^*(D_2)\simeq 131.1$, where 
$D_2=\frac{1}{\alpha}\mu_2\left(S_2^M\right)\simeq 0.82$.
Therefore, the variations of $\frac{k_1}{k_2}H_2(D)$ are as shown in the following table
\begin{center}
\begin{tabular}{c|ccccccc}
$D$&0&&0.72&&0.81&&0.82\\
\hline
$\frac{k_1}{k_2}H_2(D)$
&
$+\infty$
&
$\searrow$
&
12.57
&
$\nearrow$
&
13.37
&
$\searrow$
&
13.11
\end{tabular}
\end{center}

We fix three typical values $S_{1{\rm in}}=13$, $S_{1{\rm in}}=13.3$ and $S_{1{\rm in}}=14$, corresponding to the three horizontal lines shown in Fig. \ref{figBifS1inBenyahia5S2in0}.
We begin with the case where $S_{1{\rm in}}=14$.
Since $S_{1{\rm in}}>13.37$, as it is seen in Fig. \ref{figBifS1inBenyahia5S2in0}, 
with increasing $D$, there is a transition from 
$\mathcal{I}_4$ to $\mathcal{I}_5$ for $D=D_3\approx 0.5917$, 
then from $\mathcal{I}_5$ to 
$\mathcal{I}_3$ for $D=D_2\simeq 0.8186 $, 
then from $\mathcal{I}_3$ to 
$\mathcal{I}_0$ for 
$D=D_4\simeq 0.8696$. 
The bifurcation values $D_2$, $D_3$ and $D_4$ are defined by
\begin{equation}
\label{Dk1}
D_4=\frac{\mu_1\left(S_{1{\rm in}}\right)}{\alpha},
\quad
D_2=\frac{\mu_2\left(S_2^M\right)}{\alpha}
\quad \mbox{ and }
D_3
\mbox{ is the unique solution of }
S_{1{\rm in}}=\frac{k_1}{k_2}H_2(D)
\end{equation}
The bifurcation value 
$D_4$ corresponds to a transcritical bifurcation of $E_2^0$ and $E_1^0$; 
$D_2$ corresponds to a saddle node bifurcation of $E_2^1$ and $E_2^2$ and $D_3$ corresponds to a transcritical bifurcation of $E_2^0$ and $E_2^2$. The plot of $X_1$ and $X_2$ components of all existing steady states with respect of $D$ is shown in Fig.~\ref{figBifSin14}. Solid lines and dotted lines correspond to stable and unstable steady states respectively. Since $S_{2{\rm in}}$ the steady states $E_1^1$ and $E_1^2$ cannot exist. On Fig.~\ref{figBifSin14}(a), 
for $0<D<D_3$, the $X_1$-component of $E_2^i$, $i=0,2$, is colored in Red, with Green dots, showing the stability of $E_2^1$ and the instability of $E_2^0$.
 For $D_3<D<D_2$, the $X_1$-component of $E_2^i$, $i=0,1,2$, is colored in Red and Green, with Blue dots, showing the bistability of $E_2^0$ and $E_2^1$ and the instability of $E_2^1$.
 For $D_2<D<D_4$, the $X_1$-component of $E_2^0$ is colored in Green, showing the stability of $E_2^0$. 
On Fig.~\ref{figBifSin14}(b) 
and \ref{figBifSin14}(c), 
for $0<D<D_2$, the $X_2=0$-component of $E_j^0$, $j=1,2$, is colored with Green and Black dots, showing the instability of $E_2^0$ and $E_1^0$; For $D_2<D<D_4$ it is colored in Green, with Black dots, showing the stability of $E_2^0$ and the instability of $E_1^0$. For $D>D_4$ it is colored in Black showing the stability of $E_1^0$
%======================================

%======================================
\begin{figure}[ht]
\setlength{\unitlength}{1.0cm}
\begin{center}
\begin{picture}(8.5,6.5)(0,-0.2)		
\put(-4,0){{\includegraphics[scale=0.25]{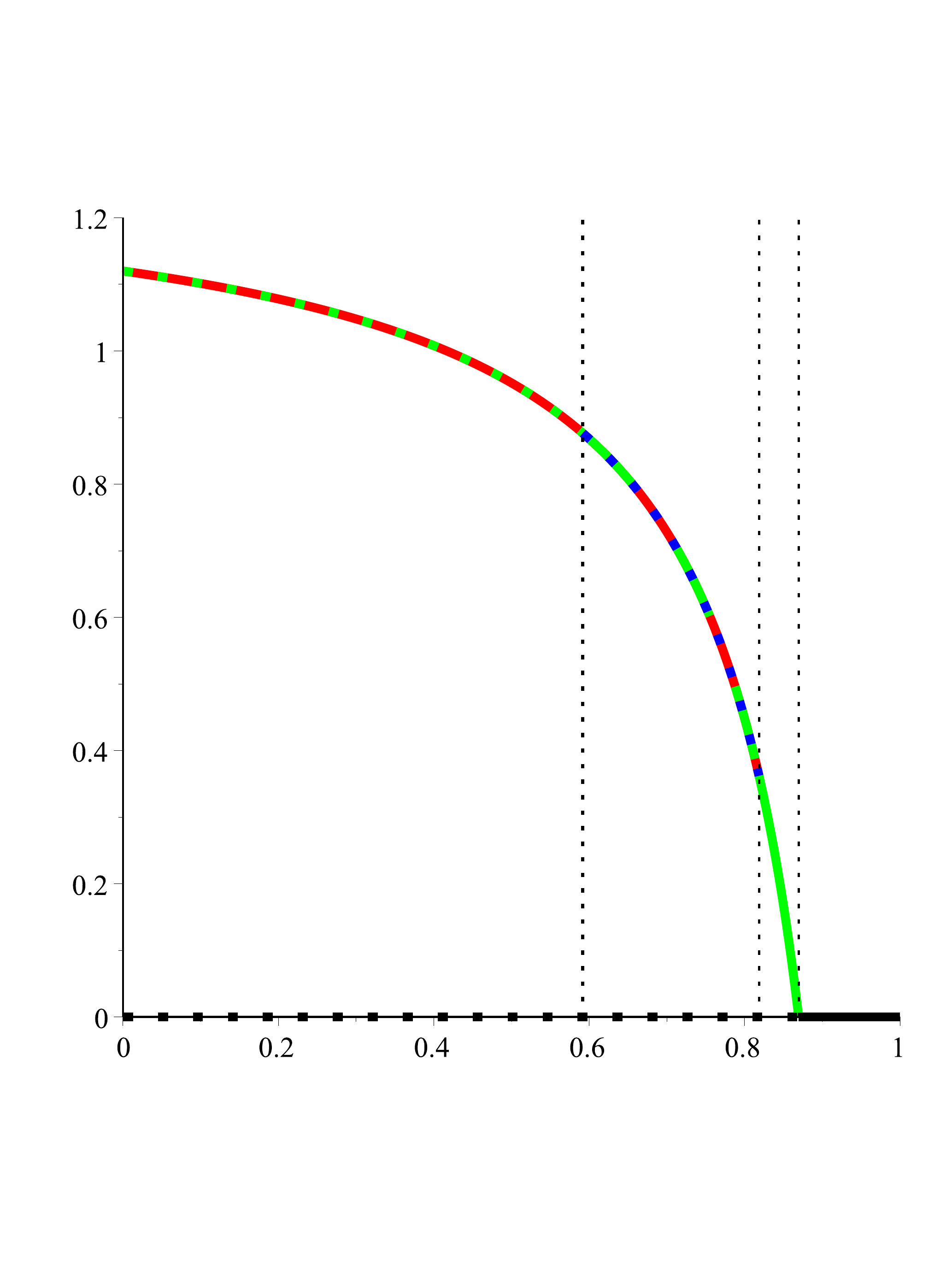}}}
\put(1.5,0){{\includegraphics[scale=0.25]{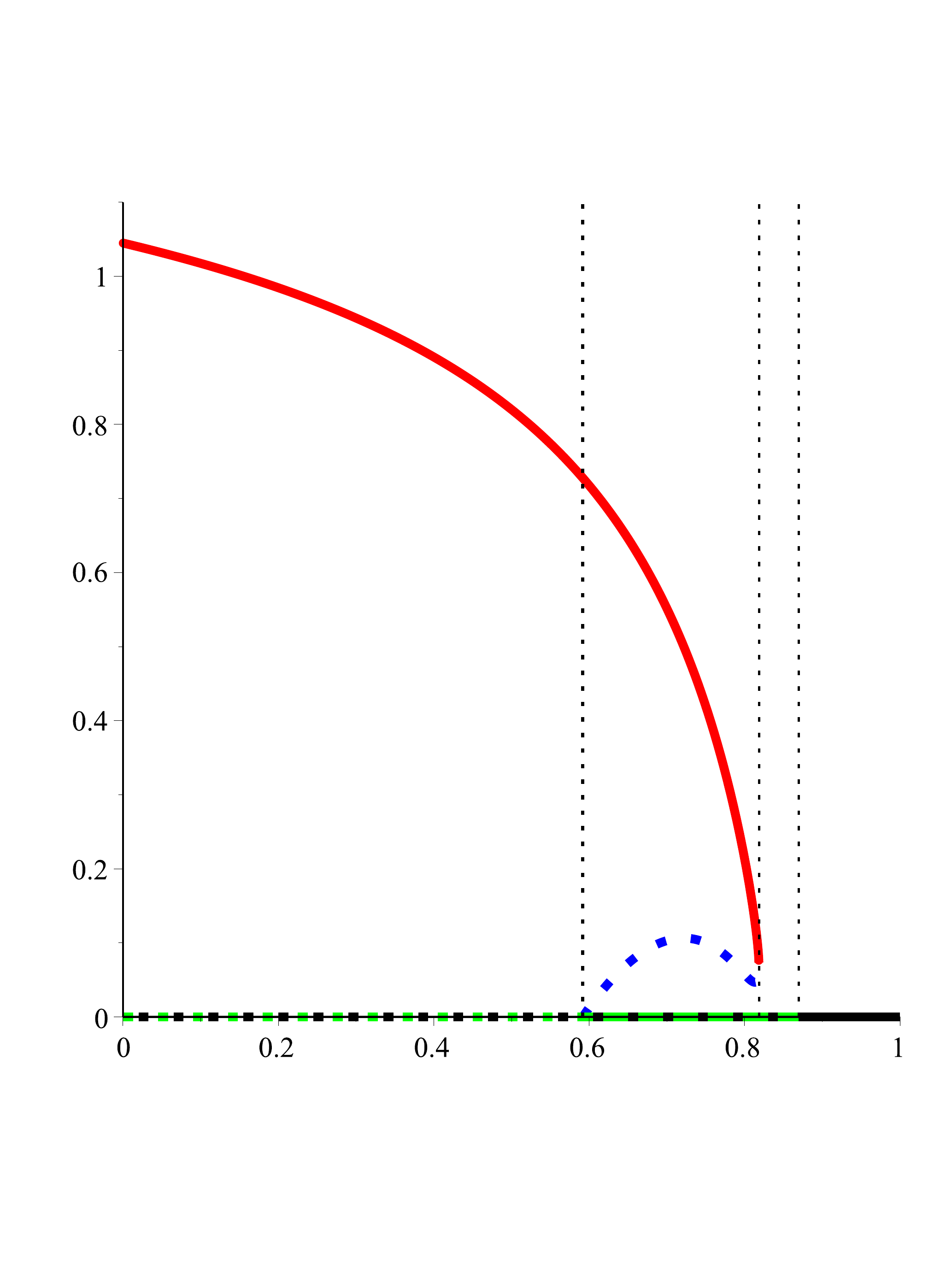}}}
\put(7,0){{\includegraphics[scale=0.25]{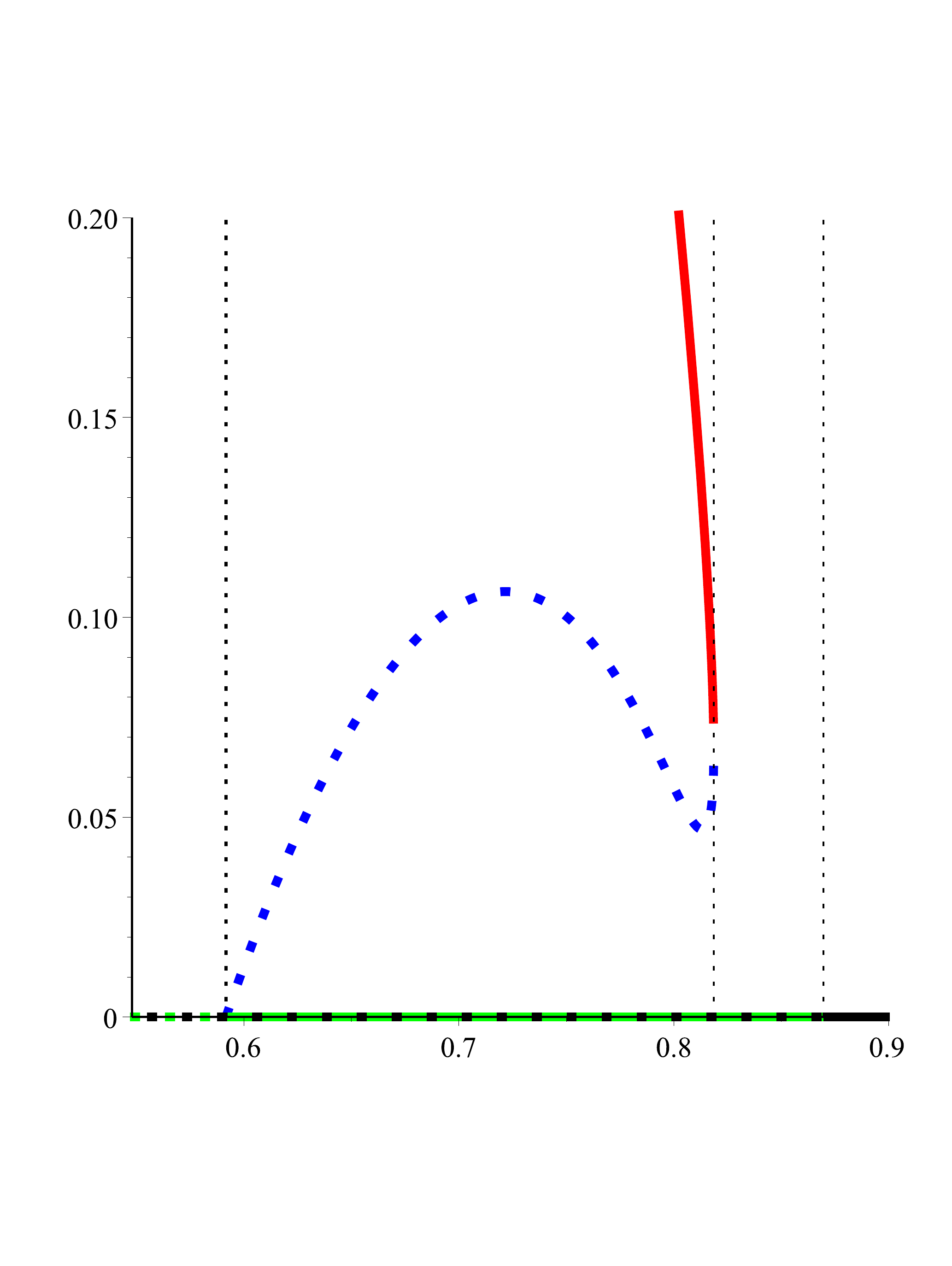}}}
%==============================================
\put(-1.5,0.5){(a)}
\put(4,0.5){(b)}	
\put(9.5,0.5){(c)}	
%%%%%%%%%%%%%%%%%%
\put(-1,0.9){\scriptsize$D_3$}
\put(0,0.9){\scriptsize$D_2$}
\put(0.4,0.9){\scriptsize$D_4$}
\put(4.5,0.9){\scriptsize$D_3$}
\put(5.5,0.9){\scriptsize$D_2$}
\put(5.9,0.9){\scriptsize$D_4$}
\put(8.1,0.9){\scriptsize$D_3$}
\put(10.8,0.9){\scriptsize$D_2$}
\put(11.4,0.9){\scriptsize$D_4$}
\put(1,1.3){$D$}
\put(6.5,1.3){$D$}	
\put(12,1.3){$D$}	
\put(-3.4,5.9){$X_1$}
\put(2.1,6){$X_2$}	
\put(7.6,6){$X_2$}
%%%%%%%%%%%%%%%%%%%%%
\put(-1.7,1.55){{$E_1^0$}}
\put(-0.4,4.25){{$E_2^2$}}
\put(-1.6,5.2){{$E_2^1$}}
\put(-0.45,3){{$E_2^0$}}
%%%%%%%%%%%%%%%%%%%%%%%%%%%%%%
\put(3.4,1.55){{$E_1^0$~$E_2^0$}}
\put(5.1,1.95){{$E_2^2$}}
\put(3.9,5.1){{$E_2^1$}}
%%%%%%%%%%%%%%%%%%%%%%%%
\put(9.5,3.85){{$E_2^2$}}
\put(10.3,5.5){{$E_2^1$}}
\put(9,1.55){{$E_1^0$ $E_2^0$}}
\end{picture}
\end{center}
\vspace{-0.9cm}
\caption{Bifurcation diagram with $D$ as the bifurcation parameter, corresponding to Fig. \ref{DS1inBenyahia5}(a) and 
$S_{1{\rm in}}=14$. 
(a): The $X_1$-components and 
(b): the $X_2$-components, of the steady states $E_1^0$ (in Black), $E_2^0$ (in Green), $E_2^1$ (in Red) and $E_2^2$ (in Blue). 
 (c): A magnification showing the bifurcation values $D_1$,  $D_2$ and $D_3$. 
Solid lines and dotted lines correspond to stable and unstable steady states respectively.  \label{figBifSin14}}
\end{figure}
%=======================================

%======================================
\begin{figure}[ht]
\setlength{\unitlength}{1.0cm}
\begin{center}
\begin{picture}(8.5,6.5)(0,-0.2)		
\put(-4,0){{\includegraphics[scale=0.25]{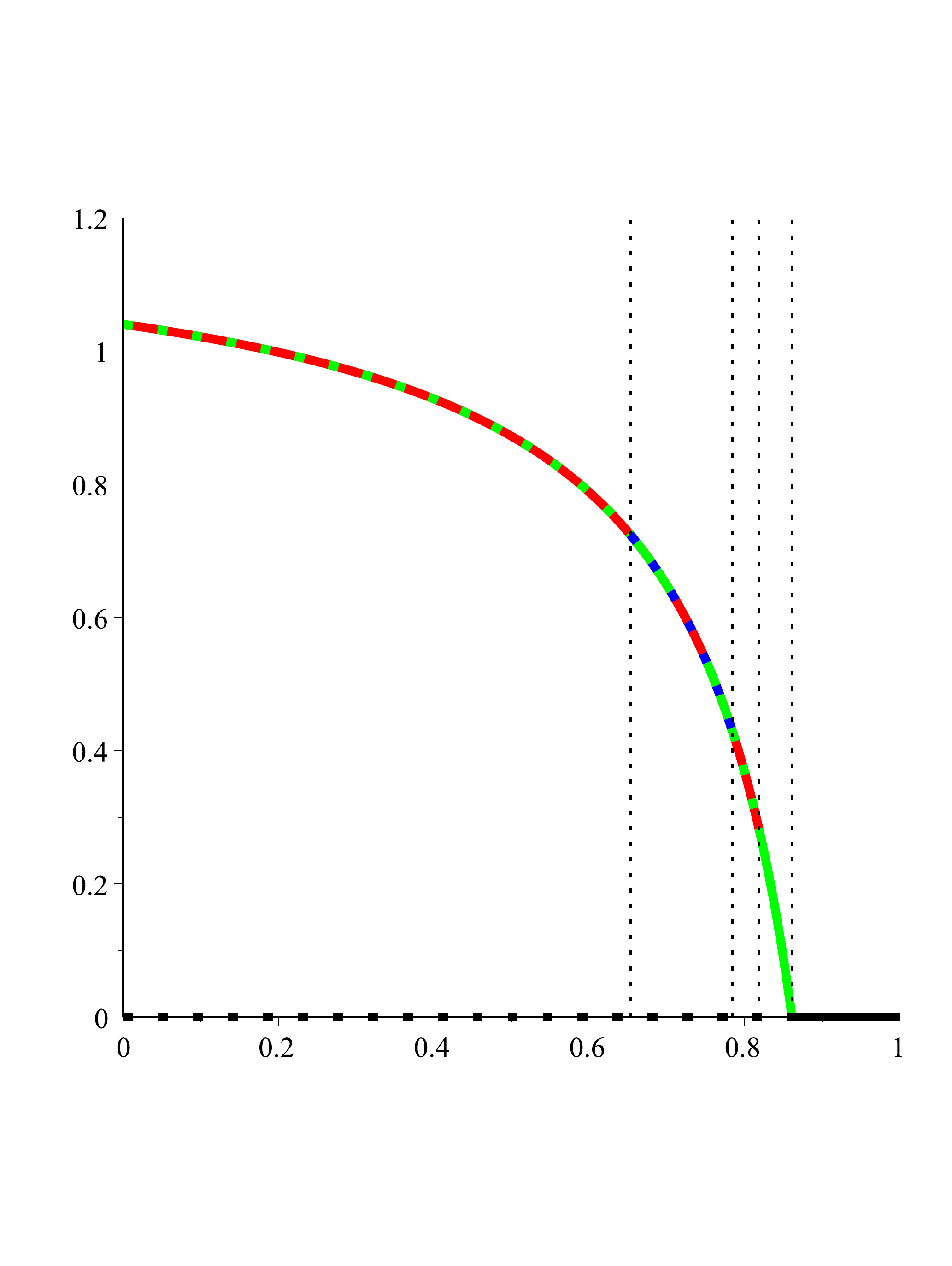}}}
\put(1.5,0){{\includegraphics[scale=0.25]{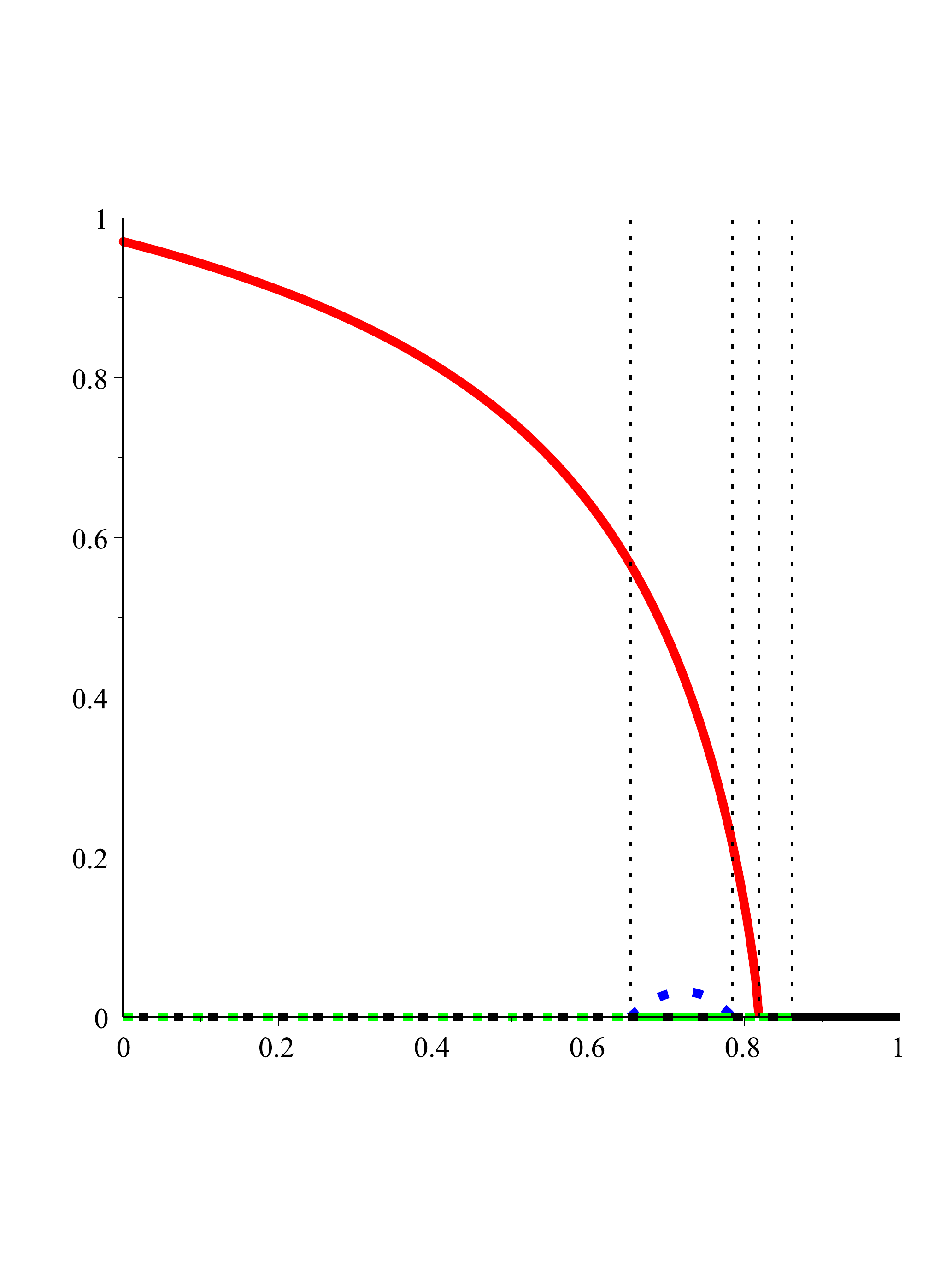}}}
\put(7,0){{\includegraphics[scale=0.25]{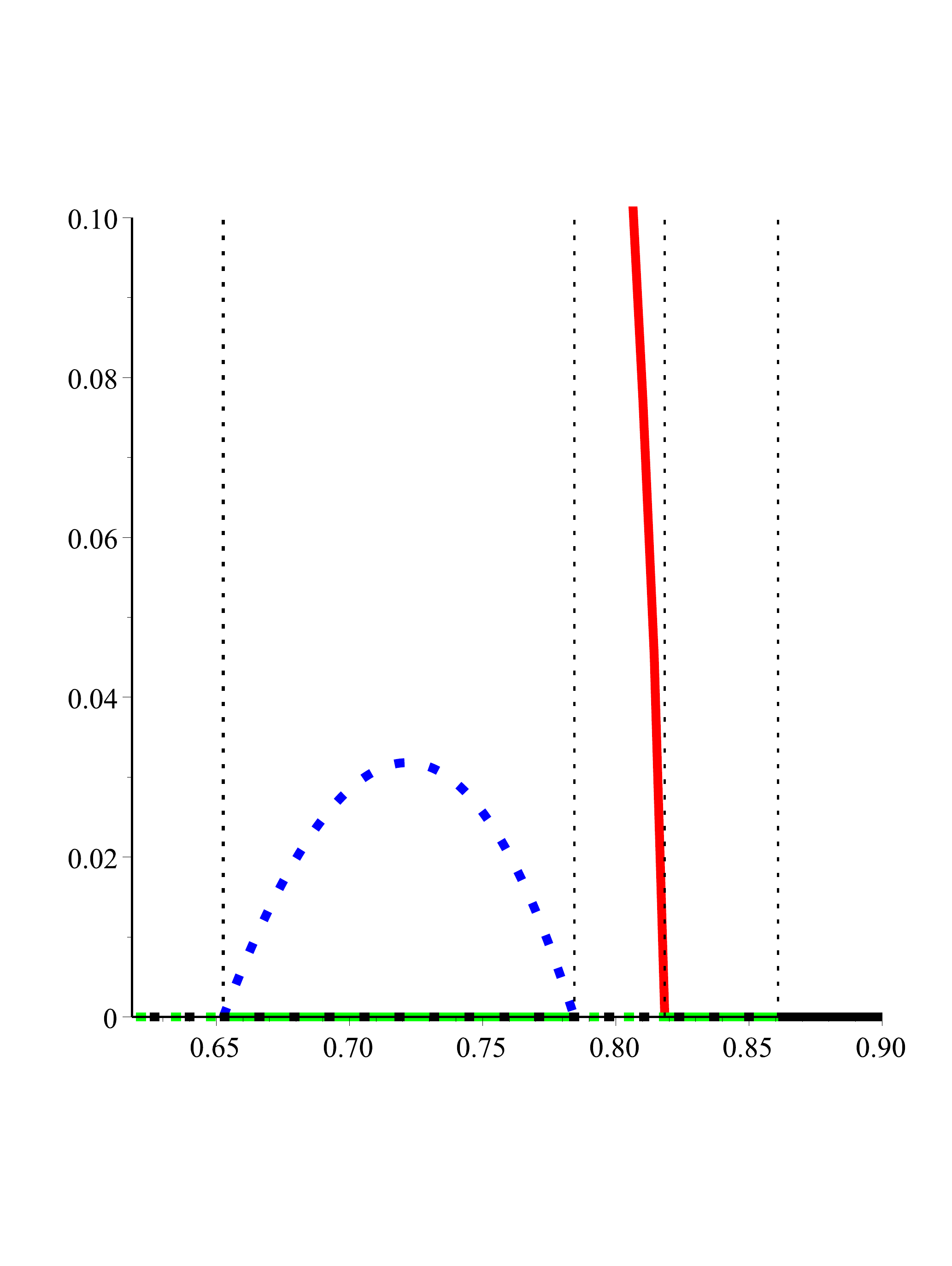}}}
%==============================================
\put(-1.5,0.5){(a)}
\put(4,0.5){(b)}	
\put(9.5,0.5){(c)}	
%%%%%%%%%%%%%%%%%%
\put(-0.7,0.9){\scriptsize$D_7$}
\put(-0.3,0.9){\scriptsize$D_6$}
\put(0.05,0.9){\scriptsize$D_5$}
\put(0.4,0.9){\scriptsize$D_4$}
\put(4.8,0.9){\scriptsize$D_7$}
\put(5.2,0.9){\scriptsize$D_6$}
\put(5.55,0.9){\scriptsize$D_5$}
\put(5.9,0.9){\scriptsize$D_4$}
\put(8.1,0.9){\scriptsize$D_7$}
\put(10,0.9){\scriptsize$D_6$}
\put(10.55,0.9){\scriptsize$D_5$}
\put(11.15,0.9){\scriptsize$D_4$}
\put(1,1.3){$D$}
\put(6.5,1.3){$D$}	
\put(12,1.3){$D$}	
\put(-3.4,5.9){$X_1$}
\put(2.1,6){$X_2$}	
\put(7.6,6){$X_2$}
%%%%%%%%%%%%%%%%%%%%%
\put(-1.7,1.55){{$E_1^0$}}
\put(-0.45,4){{$E_2^2$}}
\put(-1.6,4.85){{$E_2^1$}}
\put(-0.45,2.7){{$E_2^0$}}
%%%%%%%%%%%%%%%%%%%%%%%%%%%%%%
\put(3.4,1.55){{$E_1^0$~$E_2^0$}}
\put(5,1.7){{$E_2^2$}}
\put(3.9,5.1){{$E_2^1$}}
%%%%%%%%%%%%%%%%%%%%%%%%
\put(9.1,2.9){{$E_2^2$}}
\put(10,5.5){{$E_2^1$}}
\put(8.6,1.55){{$E_1^0$ $E_2^0$}}
\end{picture}
\end{center}
\vspace{-0.9cm}
\caption{Bifurcation diagram with $D$ as the bifurcation parameter, corresponding to Fig. \ref{DS1inBenyahia5}(a) and 
$S_{1{\rm in}}=13$. 
(a): The $X_1$-components and 
(b): the $X_2$-components, of the steady states $E_1^0$ (in Black), $E_2^0$ (in Green), $E_2^1$ (in Red) and $E_2^2$ (in Blue). 
 (c): A magnification showing the bifurcation values $D_4$, $D_5$, $D_6$ and $D_7$. 
Solid lines and dotted lines correspond to stable and unstable steady states respectively.  \label{figBifSin13}}
\end{figure}
%=======================================

Consider now the case where $S_{1{\rm in}}=13$. This case corresponds to the surprising situation where we can go from the bistability region 
(colored in Pink) 
to the global asymptotic stability region (colored in Green), when the dilution rate $D$ increases. 
Since $12.57<S_{1{\rm in}}<13.11$, as it is seen in Fig. \ref{figBifS1inBenyahia5S2in0}, with increasing $D$, there is a transition from 
$\mathcal{I}_4$ to $\mathcal{I}_5$ for $D=D_7\approx 0.6526$, 
then from $\mathcal{I}_5$ to 
$\mathcal{I}_4$ for $D=D_6\simeq 0.7844$, 
then from $\mathcal{I}_4$ to 
$\mathcal{I}_3$ for 
$D=D_5\simeq 0.8184$, then from $\mathcal{I}_3$ to 
$\mathcal{I}_0$ for 
$D=D_4\simeq 0.8609$. 
The bifurcation values $D_4$, $D_5$, $D_6$ and $D_7$ are defined by
\begin{equation}
\label{Dk2}
D_4=\frac{\mu_1\left(S_{1{\rm in}}\right)}{\alpha},
\quad
D_5
\mbox{ is the solution of }
S_{1{\rm in}}=\frac{k_1}{k_2}H_1(D),
\quad 
D_6, D_7
\mbox{ are the solutions of }
S_{1{\rm in}}=\frac{k_1}{k_2}H_2(D)
\end{equation}
The bifurcation value 
$D_4$ corresponds to a transcritical bifurcation of $E_2^0$ and $E_1^0$; 
$D_5$ corresponds to a transcritical bifurcation of $E_2^1$ and $E_2^0$ and $D_6$ and $D_7$ correspond to transcritical bifurcations of $E_2^0$ and $E_2^2$. The plot of $X_1$ and $X_2$ components of all existing steady states with respect of $D$ is shown in Fig.~\ref{figBifSin13}. Solid lines and dotted lines correspond to stable and unstable steady states respectively.
On Fig.~\ref{figBifSin13}(a), 
for $0<D<D_7$ and $D_6<D<D_5$ the $X_1$-component of $E_2^i$, $i=0,2$, is colored in Red, with Green dots, showing the stability of $E_2^1$ and the instability of $E_2^0$.
 For $D_7<D<D_6$, the $X_1$-component of $E_2^i$, $i=0,1,2$, is colored in Red and Green, with Blue dots, showing the bistability of $E_2^0$ and $E_2^1$ and the instability of $E_2^1$.
 For $D_5<D<D_4$, the $X_1$-component of $E_2^0$ is colored in Green, showing the stability of $E_2^0$. 
On Fig.~\ref{figBifSin13}(b) 
and \ref{figBifSin13}(c), for $0<D<D_7$ and $D_6<D<D_5$, the $X_2=0$-component of $E_j^0$, $j=1,2$, is colored with Green and Black dots, showing the instability of $E_2^0$ and $E_1^0$; For $D_7<D<D_6$ 
 and $D_5<D<D_4$ it is colored in Green, with Black dots, showing the stability of $E_2^0$ and the instability of $E_1^0$. For $D>D_4$ it is colored in Black showing the stability of $E_1^0$.
%======================================
%======================================
\begin{figure}[ht]
\setlength{\unitlength}{1.0cm}
\begin{center}
\begin{picture}(8.5,6.5)(0,-0.2)		
\put(-4,0){{\includegraphics[scale=0.25]{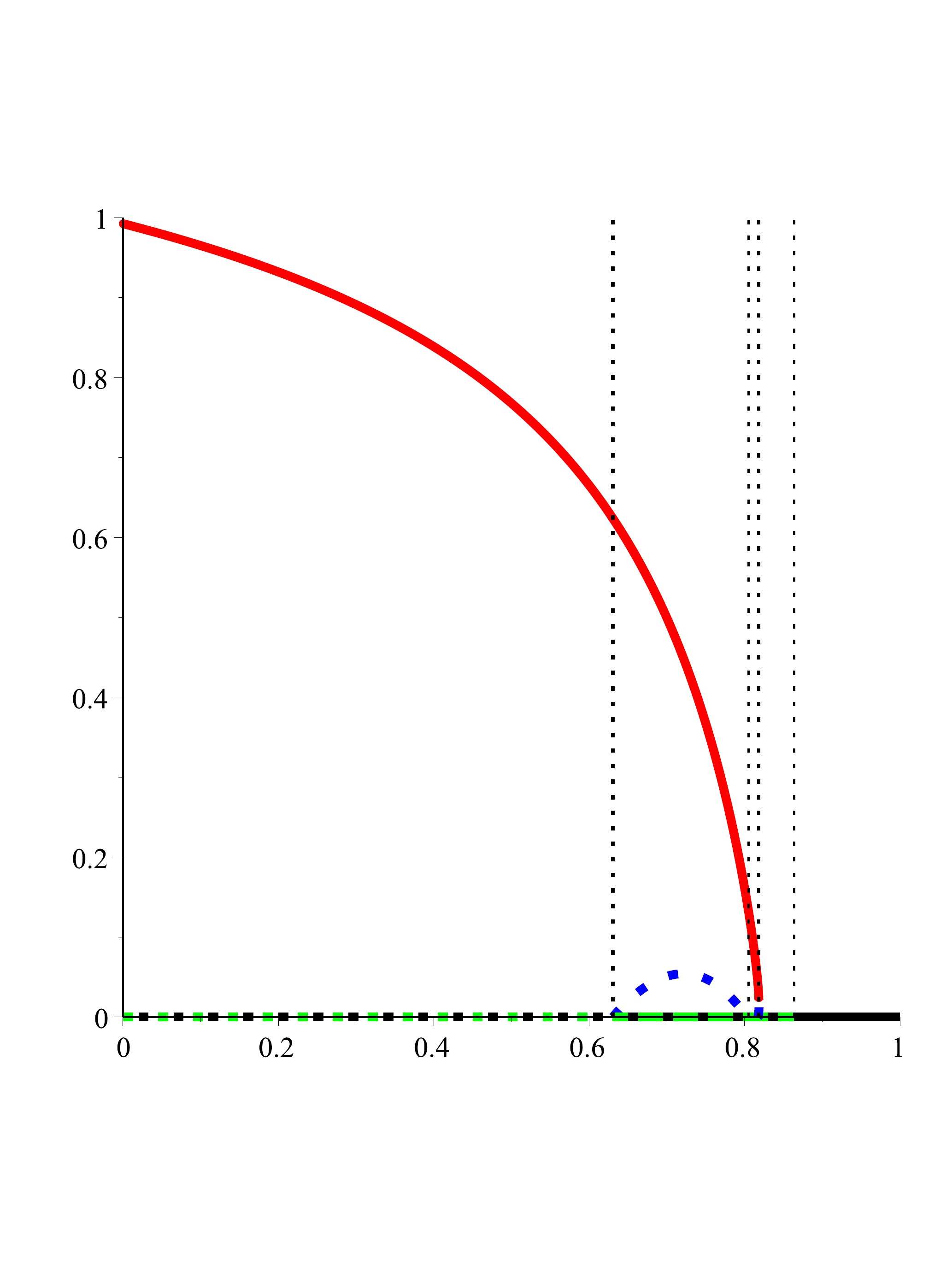}}}
\put(1.5,0){{\includegraphics[scale=0.25]{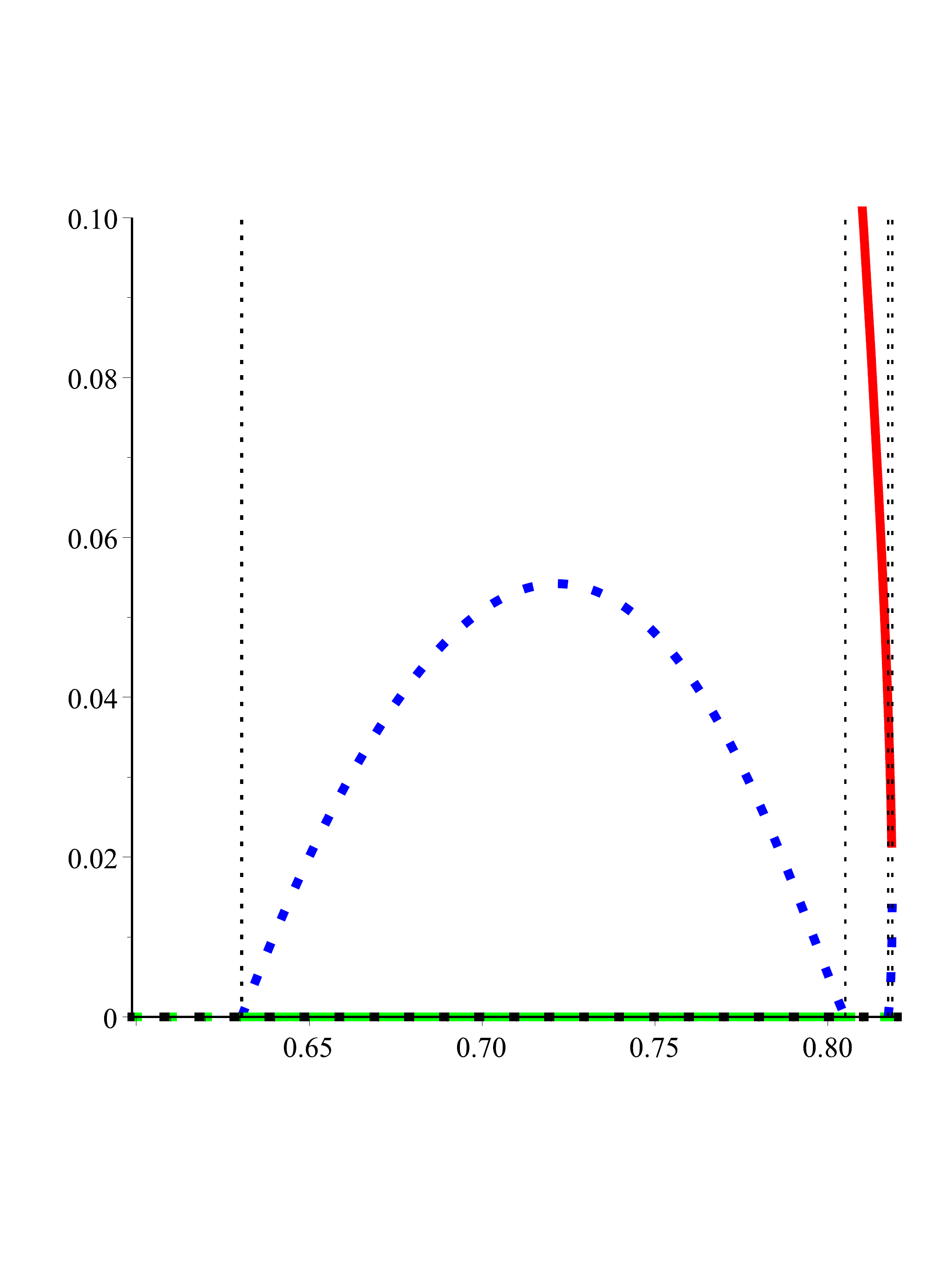}}}
\put(7,0){{\includegraphics[scale=0.25]{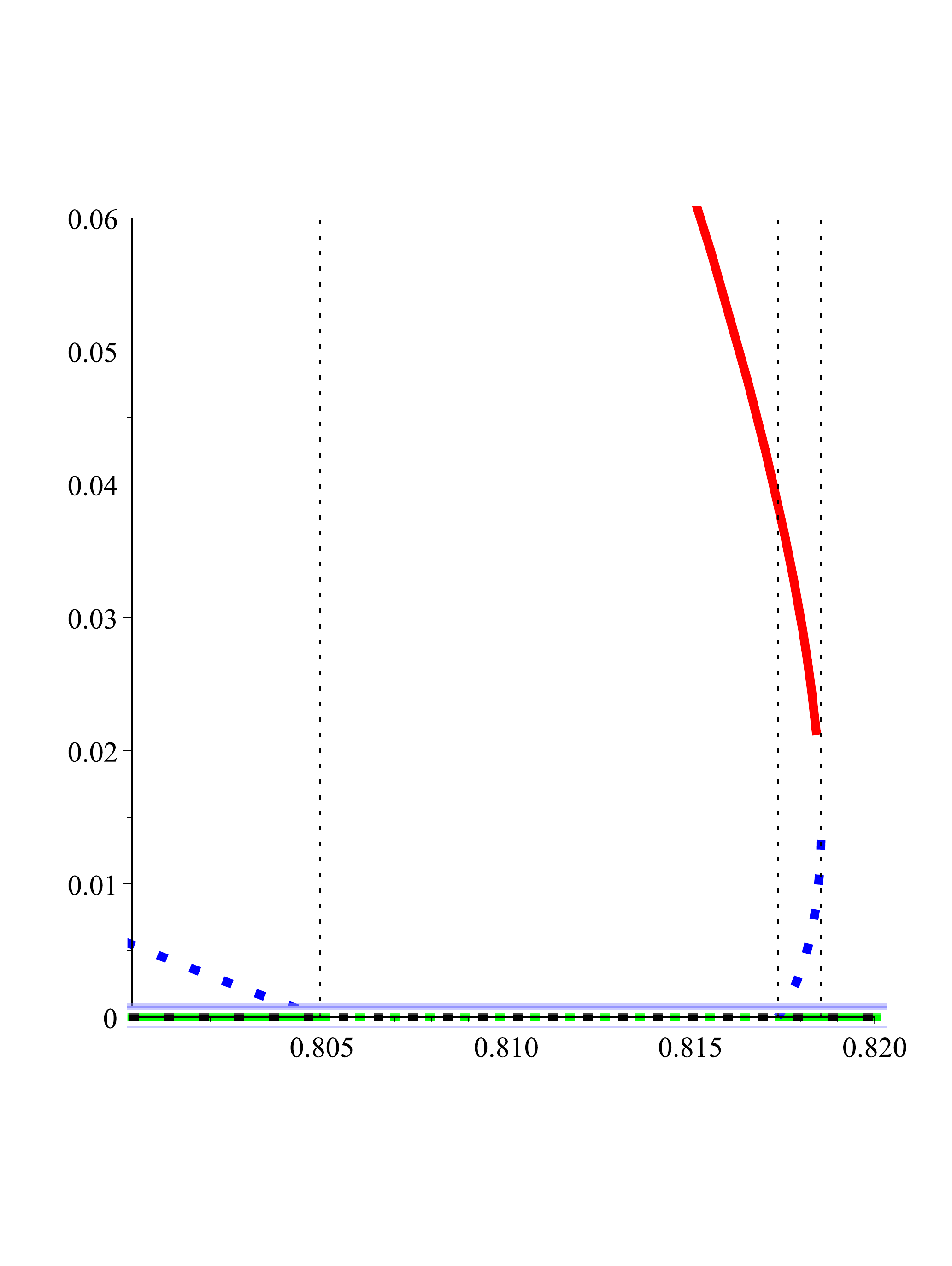}}}
%==============================================
\put(-1.5,0.5){(a)}
\put(4,0.5){(b)}	
\put(9.5,0.5){(c)}	
%%%%%%%%%%%%%%%%%%
\put(-1,0.9){\scriptsize$D_{10}$}
\put(0.1,0.9){$\uparrow$}
\put(-0.3,0.6){\scriptsize$D_9D_8D_2$}
\put(0.3,1.1){\scriptsize$D_4$}
\put(2.7,0.9){\scriptsize$D_{10}$}
\put(5.9,0.9){\scriptsize$D_9$}
\put(6.3,0.9){$\uparrow$}
\put(6.05,0.6){\scriptsize$D_8D_2$}
\put(8.6,0.9){\scriptsize$D_9$}
\put(11.05,0.9){\scriptsize$D_8$}
\put(11.45,0.9){\scriptsize$D_2$}
\put(1,1.3){$D$}
\put(6.5,1.3){$D$}	
\put(12,1.3){$D$}	
\put(-3.4,5.9){$X_2$}
\put(2.1,6){$X_2$}	
\put(7.6,6){$X_2$}
%%%%%%%%%%%%%%%%%%%%%
\put(-2.2,1.55){{$E_1^0$~$E_2^0$}}
\put(-0.5,1.8){{$E_2^2$}}
\put(-1.6,5.2){{$E_2^1$}}
%%%%%%%%%%%%%%%%%%%%%%%%%%%%%%
\put(4,1.55){{$E_1^0$~$E_2^0$}}
\put(4.4,3.9){{$E_2^2$}}
\put(5.7,5.5){{$E_2^1$}}
%%%%%%%%%%%%%%%%%%%%%%%%
\put(8,1.8){{$E_2^2$}}
\put(11.6,1.8){{$E_2^2$}}
\put(10.4,5.5){{$E_2^1$}}
\put(9.5,1.55){{$E_1^0$ $E_2^0$}}
\end{picture}
\end{center}
\vspace{-0.9cm}
\caption{Bifurcation diagram with $D$ as the bifurcation parameter, corresponding to Fig. \ref{DS1inBenyahia5}(a) and 
$S_{1{\rm in}}=14$. 
(a): The $X_2$-components of the steady states $E_1^0$ (in Black), $E_2^0$ (in Green), $E_2^1$ (in Red) and $E_2^2$ (in Blue). 
(b): A magnification showing the bifurcation values 
$D_9$ and $D_{10}$, where $D_2$ and $D_{8}$ are indistinguishable.
 (c): A larger magnification showing the bifurcation values 
$D_2$,  
$D_8$ and $D_9$. 
Solid lines and dotted lines correspond to stable and unstable steady states respectively.  \label{figBifSin13.3}}
\end{figure}
%=======================================

Consider now the case where $S_{1{\rm in}}=13.3$. This case corresponds also to the situation where we can go from the bistability region 
(colored in Pink) 
to the global asymptotic stability region (colored in Green), when the dilution rate $D$ increases. 
Since $13.11<S_{1{\rm in}}<13.37$, as it is seen in Fig. \ref{figBifS1inBenyahia5S2in0}, with increasing $D$, there is a transition from 
$\mathcal{I}_4$ to $\mathcal{I}_5$ for 
$D=D_{10}\approx 0.6304$, 
then from $\mathcal{I}_5$ to 
$\mathcal{I}_4$ for 
$D=D_9\simeq 0.8050$, 
then from $\mathcal{I}_4$ to 
$\mathcal{I}_5$ for 
$D=D_8\simeq 0.8173$, then from $\mathcal{I}_5$ to 
$\mathcal{I}_3$ for 
$D=D_2\simeq 0.8186$, then from $\mathcal{I}_3$ to 
$\mathcal{I}_0$ for 
$D=D_4\simeq 0.8636$.
The bifurcation values 
$D_2$, $D_4$, $D_8$, $D_9$ and $D_{10}$ are defined by
\begin{equation}
\label{Dk3}
D_4=\frac{\mu_1\left(S_{1{\rm in}}\right)}{\alpha},
\quad
D_2=\frac{\mu_2\left(S_2^M\right)}{\alpha}
\quad \mbox{ and }
D_8, D_9, D_{10}
\mbox{ are the solutions of }
S_{1{\rm in}}=\frac{k_1}{k_2}H_2(D)
\end{equation}
The bifurcation value 
$D_4$ corresponds to a transcritical bifurcation of $E_2^0$ and $E_1^0$; 
$D_2$ corresponds to a saddle node bifurcation of  $E_2^1$ and $E_2^2$ and $D_8$, $D_9$ and $D_{10}$ correspond to transcritical bifurcations of $E_2^0$ and $E_2^2$. The plot of the $X_2$ component of all existing steady states with respect of $D$ is shown in Fig.~\ref{figBifSin13.3}. Solid lines and dotted lines correspond to stable and unstable steady states respectively. 
Since two magnifications are necessary to represent all bifurcations, the plot of the $X_1$ component is omitted in Fig.~\ref{figBifSin13.3}. However, it is similar to those plots given in Figs.~\ref{figBifSin14}(a) and 
\ref{figBifSin13}(a).
On Fig.~\ref{figBifSin13.3} 
for $0<D<D_{10}$ and $D_9<D<D_8$, the $X_2=0$-component of $E_j^0$, $j=1,2$, is colored with Green and Black dots, showing the instability of $E_2^0$ and $E_1^0$; 
For $D_{10}<D<D_9$ 
and $D_8<D<D_4$ it is colored in Green, with Black dots, showing the stability of $E_2^0$ and the instability of $E_1^0$. For $D>D_4$ it is colored in Black showing the stability of $E_1^0$. Notice that for $D_{10}<D<D_9$ 
and $D_8<D<D_2$ both steady states $E_2^0$ and $E_2^1$ are stable.
%======================================

%=======================================
\section{Discussion}\label{sec5}

Our main contribution is to present the operating diagram and to show how it depends on the biological parameters.

The parameter space of model (\ref{AM2}), where $\mu_1$ and $\mu_2$ are given by (\ref{MonodHaldane}) is twelve dimensional: nine biological or physical parameters ($m_1$, $m_2$, $K_1$, $K_2$,$K_I$, $k_1$, $k_2$, $k_2$ and $\alpha$) and three operating parameters ($D$, $S_{1{\rm in}}$ and $S_{1{\rm in}}$).
The former parameters are called biological
parameters since they depend on the organisms, and substrate considered.
These parameters are measurable in the laboratory. In contrast, the later parameters are 
called operating parameters since they are under the control of the experimenter.

Exploring all of the twelve dimensional parameter space is almost possible. 
Fixing the biological parameters and constructing the operating diagram is a powerful answer for the discussion of the
behavior of the model with respect of the parameters. 
Therefore our approach to handle the question of the dependence with respect of the parameters of the model is to split the question in
two intermediary questions. 
First we fix the biological parameters and
present the operating diagram. Second we explore how the operating
diagram varies when the biological parameters are changed. For instance, Figs. \ref{DS1inBenyahia6}, Fig. \ref{DS1inBenyahia5} and Fig. \ref{DS1inBenyahia4} show how the operating diagram changes when the biological parameter $m_1$ is changed.

The operating
diagrams shown in the figures summarize the effect of the operating conditions on the long-term dynamics of the AM2 model and shows six type of behavior: 1) the washout of the two populations (regions colored in Red), 2) the washout of the first population while the second population is maintained (regions colored in Blue), 2) the occurrence of these two behaviors, according to initial conditions (regions colored in Cyan), 4) the washout of the second population while the first is maintained (regions colored in Yellow), 5) the persistence of both populations (regions colored in Green) and finally 6) the occurrence of these two behaviors according to initial conditions (regions colored in Pink).

In the operating diagrams shown in Figs.
\ref{DS1inBenyahia6}(a), \ref{DS1inBenyahia5}(a) and \ref{DS1inBenyahia4}(a), obtained for $S_{2{\rm in}}=0$, only regions $\mathcal{I}_0$, 
$\mathcal{I}_3$, $\mathcal{I}_4$ and 
$\mathcal{I}_5$ exist, that is to say, the steady states $E_1^i$, $i=1,2$ without acidogenic bacteria, cannot exist. 
This property is in accordance with the fact that the system being commensalistic, and without input concentration $S_{2{\rm in}}$, it is impossible for the commensal population (the methanogenic bacteria) to survive if the host population (the acidogenic bacteria) is washed out.

The operating diagram 
shows how robust or how extensive is the parameter region where
coexistence occurs, where the corresponding steady state is GAS, where the steady states, with extinction both or one of the population, is stable and
where it is unstable.

%\section{Conclusion}

\appendix
\section{Proofs}
\subsection{Proof of Proposition \ref{proSSi}}\label{ProofproSSi}

%============================
\begin{table}[ht]
\caption{Necessary and sufficient conditions of existence and local stability of the steady states of (\ref{AM2}) obtained in \cite{Benyahia}. $S_1^{*}(D)$, $S_2^{i*}(D)$ and ${S}^*_{2{\rm in}}\left(D,S_{1{\rm in}},S_{2{\rm in}}\right)$ are defined in Table \ref{tableFunctions}.} \label{TableSumExisStabBenyahia}
\begin{center}	
\begin{tabular}		{lll}
\hline
                        &  Existence conditions                          &    Stability conditions
                        \\\hline
%=====================================================================
$E_1^0$
&
Always exists
&
$S_{1{\rm in}}<S_1^*(D)$ and $S_{2{\rm in}}\notin\left[S_2^{1*}(D),S_2^{2*}(D)\right]$\\
$E_1^1$
&
$S_{2{\rm in}}>S_2^{1*}(D)$&
$S_{1{\rm in}}<S_1^*(D)$\\
$E_1^2$
&
$S_{2{\rm in}}>S_2^{2*}(D)$
&
Unstable if it exists\\
$E_2^0$
&
$S_{1{\rm in}}>S_1^*(D)$
&
${S}^*_{2{\rm in}}\left(D,S_{1{\rm in}},S_{2{\rm in}}\right)
\notin\left[S_2^{1*}(D),S_2^{2*}(D)\right]$\\
$E_2^1$
&
$S_{1{\rm in}}>S_1^*(D)$ and 
${S}^*_{2{\rm in}}\left(D,S_{1{\rm in}},S_{2{\rm in}}\right)>S_2^{1*}(D)$
&
Stable if it exists\\
$E_2^2$
&
$S_{1{\rm in}}>S_1^*(D)$ and 
${S}^*_{2{\rm in}}\left(D,S_{1{\rm in}},S_{2{\rm in}}\right)>S_2^{2*}(D)$
& 
Unstable if it exists\\
\hline
\end{tabular}
	\end{center}
\end{table}
%=====================================================================

The proof follows from \cite{Benyahia}.
It is seen from Proposition 1 of \cite{Benyahia} that the steady states are given by Table \ref{tableSSi}, where $S_1^{*}$, $S_2^{i*}$, 
$S_{2{\rm in}}^{*}$,
$X_1^{*}$, $X_2^{i}$ and $X_2^{i*}$ are defined in Table \ref{tableFunctions}. 
The necessary and sufficient conditions of existence of the steady state given in Proposition 1 of \cite{Benyahia} are summarized in the second column of Table \ref{TableSumExisStabBenyahia}. The necessary and sufficient conditions of local stability of these steady states, obtained in Table A.1 of \cite{Benyahia}, are summarized in the third column of  Table \ref{TableSumExisStabBenyahia}. 

Let us prove the following result which shows that the existence conditions of $X_2^{i*}$, $i=1,2$ steady states  given in Table \ref{TableSumExisStabBenyahia}, can be stated using the functions  $H_i(D)$, $i=1,2$, defined in Table \ref{tableFunctions}. These functions were considered  also by \cite{Sbarciog}.

\begin{lem}\label{lem1}
The conditions 
${S}^*_{2{\rm in}}\left(D,S_{1{\rm in}},S_{2{\rm in}}\right)=S_2^{i*}(D)$
 and 
${S}^*_{2{\rm in}}\left(D,S_{1{\rm in}},S_{2{\rm in}}\right)<S_2^{i*}(D)$,
for $i=1,2$, 
are equivalent to the conditions
$S_{2{\rm in}}+\frac{k_2}{k_1}S_{1{\rm in}}=H_{i}(D)$ 
and 
$S_{2{\rm in}}+\frac{k_2}{k_1}S_{1{\rm in}}<H_{i}(D)$, for $i=1,2$,
respectively.
\end{lem}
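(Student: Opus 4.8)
The plan is to recognize that Lemma~\ref{lem1} is purely a reformulation, by elementary algebra, of the definition of the auxiliary function ${S}^*_{2{\rm in}}$ given in Table~\ref{tableFunctions}, so no dynamical or stability argument is needed. First I would write out that definition and isolate the combination $S_{2{\rm in}}+\frac{k_2}{k_1}S_{1{\rm in}}$:
\[
{S}^*_{2{\rm in}}\left(D,S_{1{\rm in}},S_{2{\rm in}}\right)=S_{2{\rm in}}+\frac{k_2}{k_1}S_{1{\rm in}}-\frac{k_2}{k_1}S_1^*(D).
\]
Adding $\frac{k_2}{k_1}S_1^*(D)$ to both sides yields the single identity
\[
{S}^*_{2{\rm in}}\left(D,S_{1{\rm in}},S_{2{\rm in}}\right)+\frac{k_2}{k_1}S_1^*(D)=S_{2{\rm in}}+\frac{k_2}{k_1}S_{1{\rm in}},
\]
which is the only ingredient required.

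Next, for each $i=1,2$, I would add the fixed quantity $\frac{k_2}{k_1}S_1^*(D)$ to both sides of the relation ${S}^*_{2{\rm in}}=S_2^{i*}(D)$ (respectively ${S}^*_{2{\rm in}}<S_2^{i*}(D)$). On the left the identity above turns the expression into $S_{2{\rm in}}+\frac{k_2}{k_1}S_{1{\rm in}}$, while on the right the definition $H_i(D)=S_2^{i*}(D)+\frac{k_2}{k_1}S_1^*(D)$ from Table~\ref{tableFunctions} turns $S_2^{i*}(D)+\frac{k_2}{k_1}S_1^*(D)$ into $H_i(D)$. Because adding a fixed real number preserves both equalities and strict inequalities and the operation is invertible, this chain of manipulations reads equally well in both directions, giving at once the equivalence of ${S}^*_{2{\rm in}}=S_2^{i*}(D)$ with $S_{2{\rm in}}+\frac{k_2}{k_1}S_{1{\rm in}}=H_i(D)$, and of ${S}^*_{2{\rm in}}<S_2^{i*}(D)$ with $S_{2{\rm in}}+\frac{k_2}{k_1}S_{1{\rm in}}<H_i(D)$.

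The only point deserving a word of care, and the closest thing to an obstacle, is the domain on which every term is finite: ${S}^*_{2{\rm in}}$ requires $0\leq D<D_1$ so that $S_1^*(D)$ is finite, and the comparison with $S_2^{i*}(D)$ requires $0\leq D\leq D_2$, so the manipulation is legitimate exactly on $0\leq D<\min(D_1,D_2)$, which is precisely the domain of definition of $H_i$ recorded in Table~\ref{tableFunctions}. I would therefore note this domain explicitly, present the argument as the short reversible computation above, and conclude immediately for both $i=1$ and $i=2$.
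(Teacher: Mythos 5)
Your proof is correct and takes essentially the same route as the paper: both substitute the definition of ${S}^*_{2{\rm in}}$ from Table~\ref{tableFunctions}, move $\frac{k_2}{k_1}S_1^*(D)$ to the other side, and identify the result with $H_i(D)$, the step being reversible for both the equality and the strict inequality. Your explicit remark on the domain $0\leq D<\min(D_1,D_2)$ is a harmless addition that the paper handles implicitly through the conventions of Table~\ref{tableFunctions}.
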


\begin{proof}
The result follows from the definitions of
${S}^*_{2{\rm in}}\left(D,S_{1{\rm in}},S_{2{\rm in}}\right)$ and $H_i(D)$, given in  
Table \ref{tableFunctions}. Indeed
${S}^*_{2{\rm in}}\left(D,S_{1{\rm in}},S_{2{\rm in}}\right)=S_2^{i*}(D)$
is equivalent to
$$S_{2{\rm in}}+\frac{k_2}{k_1}\left(S_{1{\rm in}}-S_1^*(D)\right)=S_2^{i*}(D)
\Longleftrightarrow
S_{2{\rm in}}+\frac{k_2}{k_1}S_{1{\rm in}}=
S_2^{i*}(D)+\frac{k_2}{k_1}S_1^*(D).
$$
That is to say
$S_{2{\rm in}}+\frac{k_2}{k_1}S_{1{\rm in}}=
H_i(D)$. The proof for the inequality is the same.
\end{proof}

Therefore, the results in Table \ref{TableSumExisStabBenyahia} are equivalent to those in Table \ref{TableSumExisStab} which completes the proof of Proposition \ref{proSSi}.

\subsection{Proof of Proposition \ref{proIk}}\label{ProofproIk}
%%%%%%%%%%%%%%%%%%%%%%%

\begin{table}[ht]
\caption{The 9 cases of existence and stability of steady states of (\ref{AM2}) obtianed in \cite{Benyahia},  where S and U stand for {\em stable} and {\em unstable} respectively.
}\label{Table9casesBenyahia}
\begin{center}
\begin{tabular}{c|c|c|cccccc}
\hline
  Condition 1
  & Condition 2
  & {Case}
  & $E_{1}^{0}$
  &$E_{1}^{1}$ & $E_{1}^{2}$ & $E_{2}^{0}$ & $E_{2}^{1}$ & $E_{2}^{2}$
  \\ 
  \hline
  \multirow{3}{*}{%\rotatebox{90}
  {$S_{1{\rm in}}\!<\!S_1^*(D)$} }
  & $S_{2{\rm in}}<S_2^{1*}(D)$ 
   & {\bf 1.1} 
  & S& &\\
&$S_2^{1*}(D)\!<\!S_{2{\rm in}}\!\leq\!S_2^{2*}(D)$
&{\bf 1.2}
& U& S&\\
&$S_2^{2*}(D)<S_{2{\rm in}}$
&{\bf 1.3}
&S&S&U 
\\ \hline
  \multirow{6}{*}{%\rotatebox{90}
  {$S_{1{\rm in}}\!>\!S_1^*(D)$} } 
  &
 ${S}_{2{\rm in}}<{S}^*_{2{\rm in}}<S_2^{1*}<S_2^{2*}$
 & {\bf 2.1} 
  & U & & & S & & \\
&
$S_{2{\rm in}}\leq S_2^{1*}<{S}^*_{2{\rm in}}\leq S_2^{2*}$
&{\bf 2.2}
& U & & & U & S & \\
&
$S_{2{\rm in}}\leq S_2^{1*}<S_2^{2*}<{S}^*_{2{\rm in}}$ 
&{\bf 2.3}
& U & & & S & S & U \\
&
$S_2^{1*}<S_{2{\rm in}}<{S}^*_{2{\rm in}}\leq S_2^{2*}$
&{\bf 2.4}
& U & U && U & S & \\
&
$S_2^{1*}<S_{2{\rm in}}\leq S_2^{2*}
<{S}^*_{2{\rm in}}$ 
&{\bf 2.5}
& U & U & & S & S & U \\
&
$S_2^{1*}<S_2^{2*}<S_{2{\rm in}}<{S}^*_{2{\rm in}}$ 
&{\bf 2.6}
& U & U & U & S & S & U   
  \\
 \hline
\end{tabular}  
\end{center}
  \end{table}

The proof follows from \cite{Benyahia}.
The existence and stability conditions of the steady states of (\ref{AM2}) given in 
Table \ref{TableSumExisStabBenyahia}
depend only on the relative positions of the values of $S_{1in}$ and $S_1^*(D)$ and of the values of 
$S_2^{1*}(D)$, $S_2^{2*}(D)$, $S_{2in}$, 
and 
${S}^*_{2in}\left(D,S_{1in},S_{2in}\right)$.
Actually, as stated in Theorem 1 of \cite{Benyahia}, we can distinguish nine cases, according to the relative positions of these numbers. These cases are summarized in Table \ref{Table9casesBenyahia}. 

The cases {\bf 1.1}, {\bf 1.2} and {\bf 1.3} correspond to the regions 
$\mathcal{I}_0$, $\mathcal{I}_1$ and 
$\mathcal{I}_2$ respectively, defined in Table \ref{Table9Regions}. Now we use Lemma \ref{lem1} to show that the remaining six cases
{\bf 2.1} to {\bf 2.6} correspond to the six regions $\mathcal{I}_3$ to $\mathcal{I}_8$ defined in Table \ref{Table9Regions}.

Since 
${S}_{2{\rm in}}<{S}^*_{2{\rm in}}$ the case
{\bf 2.1} corresponds to the condition  
${S}^*_{2{\rm in}}<S_2^{1*}$ which is equivalent, using Lemma \ref{lem1}, to
$S_{2{\rm in}}+\frac{k_2}{k_1}S_{1{\rm in}}<H_1(D)$. Therefore the case {\bf 2.1} corresponds to the region  
$\mathcal{I}_3$ defined in Table \ref{Table9Regions}. Using again Lemma \ref{lem1}, the condition 
$S_2^{1*}<{S}^*_{2{\rm in}}<S_2^{2*}$ in the case {\bf 2.2} is equivalent to 
$H_1(D)<S_{2{\rm in}}+\frac{k_2}{k_1}S_{1{\rm in}}<H_2(D)$ and
the condition 
${S}^*_{2{\rm in}}>S_2^{2*}$ in the case {\bf 2.3} is equivalent to 
$S_{2{\rm in}}+\frac{k_2}{k_1}S_{1{\rm in}}>H_2(D)$.
Therefore the cases {\bf 2.2} and {\bf 2.3} correspond to the regions  
$\mathcal{I}_4$ and $\mathcal{I}_5$ respectively, defined in Table \ref{Table9Regions}. Using similar arguments we show that the cases {\bf 2.4}, {\bf 2.5} and {\bf 2.6} correspond to the regions 
$\mathcal{I}_6$, $\mathcal{I}_7$ and 
$\mathcal{I}_8$ respectively, defined in Table \ref{Table9Regions}. 

Excepted for cases  {\bf 1.3},  {\bf 2.3},  {\bf 2.5} and  {\bf 2.6} of bistability, the system (\ref{AM2}) has a unique globally asymptotically stable (GAS) steady state. Therefore, in the case  {\bf 1.1}, $E_1^0$ is GAS; in the case  {\bf 1.2}, $E_1^1$ is GAS, in the case  {\bf 2.1}, $E_2^0$ is GAS, and in the cases  {\bf 2.2} and  {\bf 2.4}, $E_2^1$ is GAS.  In the case {\bf 1.3}, $E_1^2$ is a saddle point whose attractive manifold is a 3-dimensional hyper-surface surface which separates the phase space of
(\ref{AM2}) into the basins of attractions of the stable steady states $E_1^0$ and $E_1^1$. 
In the cases {\bf 2.3}, {\bf 2.5} and {\bf 2.6}, $E_2^2$ is a saddle point whose stable manifold is a 3-dimensional hyper-surface which separates the phase space of
(\ref{AM2}) into the basins of attractions of the stable steady states $E_2^0$ and $E_2^1$. For details and complements on the global behaviour, see section 2.4 of \cite{Benyahia}.
This completes the proof of Proposition \ref{proIk}.

\subsection{Proof of Proposition \ref{proBifurcations}}\label{ProofproBifurcations}

Part of the proof follows from \cite{Benyahia}.
It is seen from Theorem 1 of \cite{Benyahia} that {\em non hyperbolic} steady states, that correspond to coalescence of some of the steady state, occur when two (or more) of the values of  
$S_2^{1*}(D)$, $S_2^{2*}(D)$, $S_{2in}$, and 
${S}^*_{2in}\left(D,S_{1in},S_{2in}\right)$ are equal. Notice that the condition 
$S_2^{1*}(D)=S_2^{2*}(D)$, arising in cases {\bf 1.6}, {\bf 2.11} and {\bf 2.14} of Theorem 1 of \cite{Benyahia}, corresponds of the saddle node bifurcations of $E_1^1=E_1^2$ or $E_2^1=E_2^2$. 
This condition holds on $\Gamma_6$,

Notice the condition
$S_{2{\rm in}}=S_2^{1*}(D)$, arising in cases 
{\bf 1.4}, {\bf 2.8} and {\bf 2.9} of Theorem 1 of \cite{Benyahia}, corresponds of the transcritical bifurcation $E_1^0=E_1^1$. 
This condition holds on $\Gamma_2$.
Similarly, the condition
$S_{2{\rm in}}=S_2^{2*}(D)$, arising in cases 
{\bf 1.5} and {\bf 2.13} of Theorem 1 of \cite{Benyahia}, corresponds of the transcritical bifurcation $E_1^0=E_1^2$. 
This condition holds on $\Gamma_3$.

On the other hand the condition
$S^*_{2{\rm in}}=S_2^{1*}(D)$, arising in cases 
{\bf 2.7} of Theorem 1 of \cite{Benyahia}, corresponds of the transcritical bifurcation $E_2^0=E_2^1$. 
Using Lemma \ref{lem1},  
this condition holds on $\Gamma_4$.
Similarly, the condition
$S^*_{2{\rm in}}=S_2^{2*}(D)$, arising in cases 
{\bf 2.12} and {\bf 2.15} of Theorem 1 of \cite{Benyahia}, corresponds of the transcritical bifurcation $E_2^0=E_2^2$. 
Using Lemma \ref{lem1}, 
this condition holds on $\Gamma_5$.

Finally we consider the bifurcations occuring when
$S_{1{\rm in}}=S_1^{*}(D)$. These bifurcations were not considered in Theorem 1 of \cite{Benyahia}. The condition $S_{1{\rm in}}=S_1^{*}(D)$ holds on $\Gamma_1$ and corresponds to the transcritical bifurcations $E_1^0=E_2^0$, 
$E_1^1=E_2^1$ and $E_1^2=E_2^2$. This completes the proof of Proposition \ref{proBifurcations}.
%%%%%%%%%%%%%%%%%%%%%%%%%%%%

\section{Tables}

\begin{table}[ht]
\caption{Nominal parameters values used in  \cite{Benyahia} and corresponding to the figures.
}\label{parametervalues}
\begin{center}
\begin{tabular}{l|ccccccccc}
\hline
 Parameter
  & $m_1$
  & $K_1$
  & $m_2$
  &$K_2$ 
  & $K_I$ 
  & $\alpha$ 
  & $k_1$ 
  & $k_2$
  & $k_3$
  \\ 
  %\hline
  Unit
  & ${\rm d}^{-1}$
  & g/L
  & ${\rm d}^{-1}$
  & mmol/L
  & mmol/L
  &
  &
  & mmol/g
  & mmol/g
  \\
  \hline
  Case (A): Figs. \ref{figHi}(a),
  \ref{figOD3D},
  \ref{S1inS2inBenyahia6},
  \ref{DS1inBenyahia6},
  \ref{figBifS1inBenyahia5S2in0},
  \ref{figBifSin14},
  \ref{figBifSin13},
  \ref{figBifSin13.3}
  &
  $0.6$
  &
  \multirow{3}{*}{2.1}
  &
  \multirow{3}{*}{0.95}
  &
  \multirow{3}{*}{24}
  &
  \multirow{3}{*}{55}
  &
  \multirow{3}{*}{0.5}
  &
  \multirow{3}{*}{25}
  &
  \multirow{3}{*}{250}
  &
  \multirow{3}{*}{268}
  \\
  %\hline
  Case (B): Figs. \ref{figHi}(b),
  \ref{S1inS2inBenyahia5},
  \ref{DS1inBenyahia5}
  &
  $0.5$
  &
  \\
  %\hline
  Case (C): Figs. \ref{figHi}(c),
  \ref{S1inS2inBenyahia4},
  \ref{DS1inBenyahia4}
  &
  $0.4$
  & 
 
  \\  
 \hline
\end{tabular}  
\end{center}
  \end{table}

%%%%%%%%%%%%%%%%%%%%%%%%%%%%%
\begin{table}[ht]
\caption{Intersections of the 
$\Gamma_k$ surfaces, 
$k=0\cdots8$ with a
$\left(S_{1{\rm in}},S_{2{\rm in}}\right)$ plane, where $D$ is kept constant.}\label{IntersectionD}
\begin{center}	
\begin{tabular}{c|l}
$\Gamma_k$
&
\qquad $\Gamma_k\cap\left\{D=\mbox{constant}\right\}$
\\
\hline
$\Gamma_1$&
\begin{tabular}{lcl}
Vertical line  
$S_{1{\rm in}}=S_1^*(D)$
&if $D< D_1$\\
Empty 
&if $D\geq D_1$
\end{tabular}
\\
\hline
$\Gamma_2$&
\begin{tabular}{lcl}
Horizontal line  
$S_{2{\rm in}}=S_2^{1*}(D)$
&if $D\leq D_2$\\
Empty 
&if $D> D_2$
\end{tabular}
\\
\hline
$\Gamma_3$&
\begin{tabular}{lcl}
Horizontal line 
$S_{2{\rm in}}=S_2^{2*}(D)$
&if $D\leq D_2$\\
Empty 
&if $D> D_2$
\end{tabular}
\\
\hline
$\Gamma_4$&
\begin{tabular}{lcl}
Oblique line 
$S_{2{\rm in}}+\frac{k_2}{k_1}S_{1{\rm in}}=H_1(D)$
&if $D<\min(D_1,D_2)$\\
Empty 
&if $D\geq \min(D_1,D_2)$
\end{tabular}
\\
\hline
$\Gamma_5$&
\begin{tabular}{lcl}
Oblique line 
$S_{2{\rm in}}+\frac{k_2}{k_1}S_{1{\rm in}}=H_2(D)$
&if $D<\min(D_1,D_2)$\\
Empty 
&if $D\geq \min(D_1,D_2)$
\end{tabular}
\\
\hline
$\Gamma_6$&
\begin{tabular}{lcl}
The whole plane
&if $D=D_2$\\
Empty 
&if $D\neq D_2$
\end{tabular}
\\
%\hline
\end{tabular}
\end{center}
\end{table}
%%%%%%%%%%%%%%%%%%%%%%%%%%%%%% 

%%%%%%%%%%%%%%%%%%%%%%%%%%%%%%%%%%%%%%%%%%%%%
\begin{table}[ht]
\caption{The intersections of the 
$\Gamma_k$ surfaces, 
$k=0\cdots8$ with a
$\left(D,S_{1{\rm in}}\right)$ plane, where $S_{2{\rm in}}$ is kept constant.}\label{IntersectionS2in}
\begin{center}	
\begin{tabular}{c|ll}
$\Gamma_k$
&
\qquad
$\Gamma_k\cap\left\{S_{2{\rm in}}=\mbox{constant}\right\}$
\\
\hline
$\Gamma_1$
&
\begin{tabular}{l}
Curve of function  
$S_{1{\rm in}}=S_1^*(D)$
\end{tabular}
\\
\hline
$\Gamma_2$&
\begin{tabular}{lcl}
Vertical line 
$D=\frac{1}{\alpha}\mu_2\left(S_{2{\rm in}}\right)$
&if $S_{2{\rm in}}\leq S_2^M$\\
Empty 
&if $S_{2{\rm in}}> S_2^M$
\end{tabular}
\\
\hline
$\Gamma_3$&
\begin{tabular}{lcl}
Vertical line 
$D=\frac{1}{\alpha}\mu_2\left(S_{2{\rm in}}\right)$
&if $S_{2{\rm in}}\geq S_2^M$\\
Empty 
&if $S_{2{\rm in}}< S_2^M$
\end{tabular}
\\
\hline
$\Gamma_4$&
\begin{tabular}{l}
Curve of function 
$S_{1{\rm in}}=
\frac{k_1}{k_2}\left(H_1(D)-S_{2{\rm in}}\right)$
restricted to the domain $S_{1{\rm in}}>S_1^*(D)$
\end{tabular}
\\
\hline
$\Gamma_5$&
\begin{tabular}{l}
Curve of function 
$S_{1{\rm in}}=
\frac{k_1}{k_2}\left(H_2(D)-S_{2{\rm in}}\right)$
restricted to the domain $S_{1{\rm in}}>S_1^*(D)$
\end{tabular}
\\
\hline
$\Gamma_6$&
\begin{tabular}{l}
Vertical line
$D=D_2$
\end{tabular}
\\
%\hline
\end{tabular}
\end{center}
\end{table}
%%%%%%%%%%%%%%%%%%%%%%

%%%%%%%%%%%%%%%%%%%%%%%%%%%%%%%%%%%%%%%%%%%%%
\begin{table}[ht]
\caption{Auxiliary function in the case given by  (\ref{MonodHaldane}).}\label{AuxiliaryFunctionsMonodHaldane}
\begin{center}	
\begin{tabular}{l}
\hline
$\mu_1\left(S_1\right)=\displaystyle\frac{m_{1}S_1}{K_1+S_1}$
\\[2mm]
$\mu_1(+\infty)=m_1$
\\[2mm]
$S_1^*(D)=\displaystyle
\frac{\alpha DK_1}{m_1-\alpha D}$
\\[2mm]
$S_1^*(D)$ is defined for $0<D<D_1$, where 
$D_1=\displaystyle\frac{m_1}{\alpha}$
\\[2mm]
\hline
$
\mu_2\left(S_2\right)
=\displaystyle
\frac{m_{2}S_2}{K_2+S_2+\frac{S_2^2}{K_I}}$
\\[2mm]
$S_2^M=\sqrt{K_2K_I}$
\\[2mm]
$\mu_2\left(S_2^M\right)=\displaystyle
\frac{m_2}{1+2\sqrt{K_2/K_I}}$
\\
$S_2^{1*}(D)=\displaystyle
\frac{(m_2-\alpha D)K_I-\sqrt{(m_2-\alpha D)^2K_I^2-4(\alpha D)^2K_2K_I}}{2\alpha D}
$
\\[2mm]
$S_2^{2*}(D)=\displaystyle
\frac{(m_2-\alpha D)K_I+\sqrt{(m_2-\alpha D)^2K_I^2-4(\alpha D)^2K_2K_I}}{2\alpha D}$
\\[2mm]
$S_2^{1*}(D)$ and $S_2^{2*}(D)$ 
are defined for $0<D<D_2$, where 
$D_2=
\displaystyle
\frac{\mu_2\left(S_2^M\right)}{\alpha}
=\frac{m_2}{\alpha}\frac{1}{1+2\sqrt{K_2/K_I}}
$
\\
\hline
$H_i(D)=S_2^{i*}(D)+\frac{k_2}{k_1}S_1^*(D)$,
$i=1,2$, defined for $0<D<\min(D_1,D_2)$
\\[2mm]
$
{S}^*_{2{\rm in}}\left(D,S_{1{\rm in}},
S_{2{\rm in}}\right)=
S_{2{\rm in}}+\frac{k_2}{k_1}S_{1{\rm in}}-\frac{k_2}{k_1}S_1^*(D)$,
defined for $0<D<D_1$
\\[2mm]
$
X_{2}^{i}\left(D,S_{2{\rm in}}\right)=
\frac{1}{k_3\alpha}
\left(S_{2{\rm in}}-S_{2}^{i*}(D)\right)$, $i=1,2$,
defined for $0<D<D_2$
\\[2mm]
%=====================================================================
$
X_{2}^{i*}\left(D,S_{1{\rm in}},S_{2{\rm in}}\right)=\frac{1}{k_3\alpha}
\left(
S_{2{\rm in}}+\frac{k_2}{k_1}S_{1{\rm in}}-\frac{k_2}{k_1}H_i(D)\right)$, $i=1,2$,
defined for $0<D<\min(D_1,D_2)$
\\
\hline
\end{tabular}
\end{center}
\end{table}
%%%%%%%%%%%%%%%%%%%%%%

In this section, we give several tables that are used in the paper. 
In Table \ref{parametervalues}, we provide the biological parameter values used in the figures. Tables \ref{IntersectionD} and \ref{IntersectionS2in}, we give the description of the intersection of the $\Gamma_k$ surfaces with a two dimesnional operating plane where $D$ or $S_{2in}$ is kept constant respectively. 
In Table \ref{AuxiliaryFunctionsMonodHaldane}, we present the functions defined in Table \ref{tableFunctions}
in the particular case of the Monod and Haldane growth function \ref{MonodHaldane}.

\section*{Acknowledgments}
The authors thank the Euro-Mediterranean research network TREASURE 
(\url{http://www.inra.fr/treasure}) for support.
The authors thank Jérôme Harmand for valuable and fruitful discussions. During the preparation of this work, the second author was publicly funded through ANR (the French National Research Agency) under the ``Investissements d’avenir'' programme with the reference ANR-16-IDEX-0006. The second author thankks Direction G\'en\'erale de la Recherche Scientifique et du D\'eveloppement Technologique (DG RSDT), Algeria, for support.

%\section*{Conflict of interest}

\bibliographystyle{unsrt}
%\bibliography{references}  %%% Remove comment to use the external .bib file (using bibtex).
%%% and comment out the ``thebibliography'' section.

%%% Comment out this section when you \bibliography{references} is enabled.

\begin{thebibliography}{}

\end{thebibliography}


\begin{thebibliography}{99}

\bibitem{Abdellatif}
 \newblock N. Abdellatif, R. Fekih-Salem and T. Sari.
  \newblock Competition for a single resource and coexistence
of several species in the chemostat. 
 \newblock \emph{Mathematical Biosciences and Engineering}, {\bf 13} (2016): 631-652.
\newblock doi:10.3934/mbe.2016012

\bibitem{Alcaraz2002}
\newblock V. Alcaraz-Gonz\'alez, J. Harmand, A. Rapaport, J.P. Steyer, V. Gonz\'alez-Alvarez, C. Pelayo-Ortiz, 
\newblock Software sensors for highly uncertain WWTPs : a new approach based on interval observers, 
\newblock \emph{Water Res.} {\bf 36} (2002): 2515–2524, 
\newblock doi: 10.1016/S0043-1354(01) 00466-3 

\bibitem{Alcaraz2005}
\newblock V. Alcaraz-Gonz\'alez, J. Harmand, A. Rapaport, J.P. Steyer, V. Gonz\'alez-Alvarez, 
C. Pelayo-Ortiz, 
\newblock Application of a robust interval observer to an anaerobic digestion process, 
\newblock \emph{Dev. Chem. Eng. Miner. Process.} {\bf 13} (2005): 267-278. 
\newblock doi: 10.1002/apj.5500130308

\bibitem{Bar}
\newblock  B. Bar, T. Sari. 
\newblock The operating diagram for a model of competition in a chemostat with an external lethal inhibitor. 
 \newblock \emph{Discrete \& Continuous Dynamical Systems - B}, {\bf 25} (6) (2020): 2093-2120. 
 \newblock doi:10.3934/dcdsb.2019203
 
\bibitem{BastinDochain}
\newblock G. Bastin and D. Dochain.
\newblock \emph{On-Line Estimation and Control of Bioreactors}. 
\newblock Elsevier Science Publishers, Amsterdam, 1990.


\bibitem{Batstone}
\newblock D.J. Batstone, J. Keller, I. Angelidaki, S.V. Kalyuzhnyi, S.G. Pavlostathis, A. Rozzi, W.T.M Sanders, H. Siegrist, V.A. Vavilin.
\newblock The Iwa Anaerobic Digestion Model No 1 (ADM1).
\newblock \emph{Water Sci Technol}, {\bf 45} (10) (2002): 65–73.
\newblock doi: 10.2166/wst.2002.0292


\bibitem{Benyahia}
\newblock B. Benyahia, T. Sari, B. Cherki, J. Harmand.
\newblock{Bifurcation and stability analysis of a two step model for monitoring anaerobic digestion processes}.
\newblock \emph{J. Process Control}, 
{\bf 22} (6) (2012): 1008-1019.
\newblock {doi: 10.1016/j.jprocont.2012.04.012}


\bibitem{Bernard}
\newblock O. Bernard, Z. Hadj-Sadock, D. Dochain, A. Genovesi, J.-P. Steyer.
\newblock{ Dynamical model development and parameter identification for an anaerobic wastewater treatment process}.
\newblock \emph{Biotechnol Bioeng.} 
{\bf 75} (14) (2001): 424-438.
\newblock doi: 10.1002/bit.10036

\bibitem{Bornhoft}
\newblock  A. Bornh\"{o}ft, R. Hanke-Rauschenbach, K. Sundmacher.
\newblock Steady-state analysis of the Anaerobic Digestion Model No. 1 (ADM1).
\newblock \emph{Nonlinear Dynamics}, 
{\bf 73} (2013): 535-549.
\newblock doi: 10.1007/s11071-013-0807-x

\bibitem{Burchard}
\newblock A. Burchard, 
\newblock Substrate degradation by a mutualistic association of two species in the chemostat,
\newblock \emph{J. Math. Bio.}, 
{\bf 32} (1994): 465-489.
\newblock doi: 10.10 07/BF0 0160169

\bibitem{Daoud}
\newblock Y. Daoud, N. Abdellatif, T. Sari, J. Harmand, 
\newblock Steady state analysis of a syntrophic model: The effect of a new input substrate concentration, 
\newblock \emph{Math. Model. Nat. Phenom.},  {\bf 13} (3) (2018): 31.
\newblock doi: 10.1051/mmnp/2018037 


\bibitem{Dellal}
\newblock M. Dellal, M. Lakrib, T. Sari.
\newblock  The operating diagram of a model of two competitors in a chemostat with an external inhibitor. 
\newblock \emph{Mathematical Biosciences},  {\bf 302} (2018): 27-45.
\newblock doi: 10.1016/j.mbs.2018.05.004


\bibitem{ElHajji}
\newblock M. El-Hajji, F. Mazenc, J. Harmand, 
\newblock A mathematical study of a syntrophic relationship of a model of anaerobic digestion process, 
\newblock \emph{
Mathematical Biosciences \& Engineering}, {\bf 7} (3) (2010): 641-656. 
\newblock doi: 10.3934/mbe.2010.7.641

\bibitem{Fekih}
 \newblock R. Fekih-Salem, C. Lobry and T. Sari.
  \newblock \emph{A density-dependent model of competition for one
resource in the chemostat}, 
 \newblock \emph{Mathematical Biosciences},  {\bf 286} (2017): 104-122.
\newblock doi: 10.1016/j.mbs.2017.02.007


\bibitem{Freitas}
\newblock M.J. De Freitas and A.G. Fredrickson, \newblock Inhibition as a factor in the maintenance of the
diversity of microbial ecosystems, 
\newblock \emph{Journal of General Microbiology}, {\bf 106} (1978): 307-320.
\newblock doi: 10.1099/00221287-106-2-307

\bibitem{Garcia}
\newblock C. García-Diéguez, O. Bernard and E. Roca.
\newblock Reducing the Anaerobic Digestion Model No.1 for its application to an industrial wastewater treatment plant treating winery effluent wastewater,
\newblock \emph{Bioresource Technology},
{\bf 132} (2013): 244-253.
\newblock doi: 10.1016/j.biortech.2012.12.166

\bibitem{Hanaki}
\newblock M. Hanaki, J. Harmand, Z. Mghazli, A. Rapaport, T. Sari, P. Ugalde.
\newblock Mathematical study of a two-stage anaerobic model
when the hydrolysis is the limiting step (2020).
\newblock hal-02531141v2

\bibitem{HDR}
\newblock J. Harmand, A. Rapaport, D. Dochain.
\newblock How increasing removal rate can globally
stabilize the anaerobic digestion model (2020).
\newblock hal-02549669

\bibitem{HLRS}
\newblock J. Harmand, C. Lobry, A. Rapaport and T. Sari,
\newblock \emph{The Chemostat: Mathematical Theory of Microorganism Cultures},
\newblock Wiley ISTE Editions, 2017.

\bibitem{Jost}
\newblock J.L. Jost, J.F. Drake, A.G. Fredrickson, H.M. Tsuchiya.
\newblock Interactions of \emph{Tetrahymena pyriformis, Escherichia coli, Azotobacter Vinelandii}, and glucose in a minimal medium. 
\newblock \emph{J. Bacteriol.},  {\bf 113} (2) (1973): 834-840.
\newblock PMCID: PMC285298

\bibitem{Khedim}
\newblock Z. Khedim, B. Benyahia, B. Cherki, T. Sari, J. Harmand, 
\newblock Effect of
control parameters on biogas production during the anaerobic digestion of protein-rich substrates, \newblock \emph{Applied Mathematical Modelling}, 
{\bf 61} (2018), 351--376.
\newblock doi: 10.1016/j.apm.2018.04.020

\bibitem{Monod}
\newblock J. Monod,
\newblock {La technique de culture continue. Th\'eorie et applications},
\newblock \emph{Annales de l'Institut Pasteur}, {\bf 79} (1950): {390-410}.
\newblock doi: 10.1016/B978-0-12-460482-7.50023-3

\bibitem{Pavlou}
\newblock S. Pavlou, 
\newblock Computing operating diagrams of bioreactors, 
\newblock \emph{J. Biotechnol.} {\bf 71} (1999):
7-16.
\newblock doi: 10.1016/s0168-1656(99)00011-5

\bibitem{Reilly} 
\newblock P.J. Reilly (1974),
\newblock Stability of commensalistic systems, \newblock \emph{Biotechnology
and Bioengineering} {\bf 16} (1974): 1373-1392.
\newblock doi: 10.1002/bit.260161006

\bibitem{Sari2012}
\newblock T. Sari, M. El-Hajji, J. Harmand, 
\newblock The mathematical analysis of a syntrophic relationship
between two microbial species in a chemostat, \newblock \emph{Math. Biosci. Eng.}, {\bf 9} (2012): 627-645.
\newblock doi: 10.3934/mbe.2012.9.627

\bibitem{Sari2016}
\newblock T. Sari and J. Harmand.
\newblock A model of a syntrophic relationship between two microbial species in a chemostat including maintenance.
\newblock \emph{Mathematical Biosciences}, {\bf 275}  (2016): 1-9.
\newblock doi: 10.1016/j.mbs.2016.02.008

\bibitem{Sari2017}
\newblock T. Sari, M.Wade, 
\newblock Generalised approach to modelling a three-tiered microbial food-web, 
\newblock \emph{Math. Biosci.}, {\bf 291} (2017): 21-37.
\newblock doi: 10.1016/j.mbs.2017.07.005

\bibitem{Sbarciog}
\newblock M. Sbarciog, M. Loccufier, E. Noldus.
\newblock Determination of appropriate operating strategies for anaerobic digestion systems.
\newblock \emph{Biochemical Engineering Journal},
{\bf 51} (2010): 180-188.
\newblock doi: 10.1016/j.bej.2010.06.016

\bibitem{SW}
\newblock {H.L. Smith and P. Waltman},
\newblock \emph{The theory of the chemostat: Dynamics of microbial competition},
\newblock {Cambridge University Press}, 1995.

\bibitem{Stephanopoulos}
\newblock G. Stephanopoulos, 
\newblock The dynamic of commensalism, 
\newblock \emph{Biotechnology and
Bioengineering} {\bf 23} (1981): 2243-2255.
\newblock doi: 10.1002/bit.260231008

\bibitem{WadeReview}
\newblock M.J. Wade, J. Harmand, B. Benyahia, T. Bouchez, S. Chaillou, 
B. Cloez, J.-J. Godon, B. Moussa Boudjemaa, A. Rapaport, T. Sari, R. Arditi and C. Lobry,
\newblock 
Perspectives in mathematical modelling for microbial ecology.
\newblock \emph{Ecological Modelling} {\bf 321}  (2016): 64-74.
\newblock doi: 10.1016/j.ecolmodel.2015.11.002

\bibitem{Wade2016}
\newblock M. Wade, R. Pattinson, N. Parker, and J. Dolfing, 
\newblock Emergent behaviour in a chlorophenol-
597 mineralising three-tiered microbial `food web', 
\newblock \emph{J. Theor. Biol.}, {\bf 389} (2016): 171-186.
\newblock doi: 0.1016/j.jtbi.2015.10.032

\bibitem{Weedermann2013}
\newblock M. Weedermann, G. Seo, G. Wolkowics,
\newblock  Mathematical Model of
Anaerobic Digestion in a Chemostat: Effects of Syntrophy and
Inhibition, 
\newblock \emph{Journal of Biological Dynamics} 
{\bf 7} (2013): 59-85.
\newblock doi: 10.1080/17513758.2012.755573

\bibitem{Weedermann2015}
\newblock M. Weedermann, G. Wolkowicz, J. Sasara,
\newblock Optimal biogas
production in a model for anaerobic digestion. \newblock \emph{Nonlinear Dynamics}
{\bf 81} (2015): 1097-1112. 
\newblock doi: 10.1007/s11071-015-2051-z

\bibitem{Xu}
\newblock A. Xu, J. Dolfing, T. Curtis, G. Montague, and E. Martin, 
\newblock Maintenance affects the stability of a two-tiered microbial `food chain'?, 
\newblock \emph{J. Theor. Biol.}, {\bf 276} (2011), 35-41.
\newblock doi:

\end{thebibliography}

\end{document}